\newcommand{\rbb}{\mathbb{R}}
\renewcommand{\L}{\mathcal{L}}
\newcommand{\W}{\mathcal{W}}
\newcommand{\Pcal}{\mathcal{P}}
\newcommand{\X}{\mathbf{X}}
\newcommand{\B}{\mathcal{B}}
\newcommand{\la}{\langle}
\newcommand{\ra}{\rangle}
\newcommand{\nt}{\notag}
\newcommand{\qhat}{\hat{q}}
\newcommand{\phat}{\hat{p}}
\newcommand{\qb}{\mathbf{q}}
\newcommand{\pb}{\mathbf{p}}
\renewcommand{\div}{\text{div}}
\newcommand{\mi}{\wedge}
\newcommand{\D}{\mathcal{D}}
\renewcommand{\d}{\text{d}}
\newcommand{\f}{\varphi}
\newcommand{\grad}{\nabla}
\newcommand{\E}{\mathbb{E}}
\renewcommand{\P}{\mathbb{P}}
\newcommand{\U}{U}
\newcommand{\G}{G}
\newcommand{\close}{\!\!\!}
\newcommand{\TV}{\textup{TV}}
\theoremstyle{plain}
\newtheorem{theorem}{Theorem}[section]
\newtheorem{lemma}[theorem]{Lemma}
\newtheorem{proposition}[theorem]{Proposition}
\theoremstyle{definition}
\newtheorem{definition}[theorem]{Definition}
\newtheorem{remark}[theorem]{Remark}
\numberwithin{equation}{section}
\title{Trend to equilibrium and Newtonian limit for the relativistic Langevin equation with singular potentials}
\author{Manh Hong Duong$^1$ and Hung Dang~Nguyen$^2$}
\address{$^1$ School of Mathematics, University of Birmingham, Birmingham, UK}
\address{$^2$ Department of Mathematics, University of Tennessee, Knoxville, Tennessee, USA}
\begin{document}

\begin{abstract}

We study a system of interacting particles in the presence of the relativistic kinetic energy, external confining potentials, singular repulsive forces as well as a random perturbation through an additive white noise. In comparison with the classical Langevin equations that are known to be exponentially attractive toward the unique statistically steady states, we find that the relativistic systems satisfy algebraic mixing rates of any order. This relies on the construction of Lyapunov functions adapting to previous literature developed for irregular potentials. We then explore the Newtonian limit as the speed of light tends to infinity and establish the validity of the approximation of the solutions by the Langevin equations on any finite time window.  

\end{abstract}

\maketitle

\section{Introduction} \label{sec:intro}

Given an integer $N\ge 1$, we consider a family of $N$ interacting particles evolving in $\rbb^d$ and governed by \textit{the relativistic underdamped Langevin dynamics}
\begin{align} \label{eqn:rLE:N-particle:original}
\d\, q_i(t) &= \grad K(p_i(t))\d t,\quad  i=1,\dots,N, \notag\\
\d\, p_i(t) & = -\gamma\grad K(p_i(t))\d t -\grad \U(q_i(t))\d t- \sum_{j\neq i}\grad \G\big(q_i(t)-q_j(t)\big) \d t +\sqrt{2\gamma} \,\d W_{i}(t).
\end{align}
In \eqref{eqn:rLE:N-particle:original}, the bivariate process $(q_i,p_i)$ represents the position and the momentum of the $i^{\text{th}}$ particle. Equation \eqref{eqn:rLE:N-particle:original} posits that the particle moves under the influence of four different forces, namely,  (i) a friction given by $-\gamma \nabla K(p_i)$, where
\begin{align} \label{form:K(p)}
K(p)=c\sqrt{m^2c^2+|p|^2},
\end{align}
is the relativistic kinetic energy, $c>0$ represents the speed of light, $m>0$ denotes the particle's mass, and $\gamma>0$ plays the role of a friction coefficient; (ii) an external force $-\nabla U(q_i)$ which is the derivative of a smooth confining potential $U:\rbb^d\to[1,\infty)$ having polynomial growths; (iii) an interacting force $\sum_{j\neq i}\nabla G (q_i-q_j)$ where the interaction potential $G:\rbb^d\setminus\{0\}\to\rbb$ being a singular function satisfying certain properties, and (iv) finally a random perturbation $\sqrt{2\gamma}\d W_i$, where $\{W_i\}_{i=1}^N$ is a collection of i.i.d standard Brownian motions in $\rbb^d$.

Historically, the single-particle system ($N=1$, $G=0$) was originally proposed in \cite{debbasch1997relativistic} as an Ornstein-Uhlenbeck process that is compatible with Einstein's theory of special relativity \cite{einstein1905enertia,einstein1905electrodynamics}. Since then many other relativistic stochastic models have also been introduced, including those in \cite{dunkel2005atheory,dunkel2005theory}, which are similar to the one in  \cite{debbasch1997relativistic} but with multiplicative noises. There is now a large body of literature devoted to studying relativistic stochastic models, see for instance \cite{alcantara2011relativistic,chevalier2008relativistic,
debbasch2004diffusion,debbasch1998diffusion,
duong2015formulation,felix2013newtonian,
haba2009relativistic,
haba2009relativisticII,haba2010energy,
pal2020stochastic} and the survey articles \cite{debbasch2007relativistic,dunkel2009relativistic} for a detailed exposition of the topics. Physically, whenever a relativistic model is introduced, it is desirable to achieve at least two fundamental properties: it possesses a thermodynamically correct equilibrium state, which is the relativistic Maxwell-Boltzmann
distribution and (ii) it recovers the classical  (non-relativistic) counterpart in the Newtonian limit, that is when the speed of light tends to infinity. Rigorously establishing these two properties is of special interest in mathematics over the last decades, see the aforementioned papers for the relativistic Langevin-type processes, as well as \cite{calogero2004newtonian,strain2010global,mai2022newtonian} for other relativistic kinetic models including the relativistic Boltzmann equation and the relativistic Euler-Poisson system.

Concerning the $N$-particle relativistic Langevin equation \eqref{eqn:rLE:N-particle:original}, we observe that as $c\to\infty$
\begin{align*}
\grad K(p)=\frac{cp}{\sqrt{m^2c^2+|p|^2}} \to \frac{p}{m}.
\end{align*}
Thus, upon letting $c\to\infty$ in the Newtonian (non-relativistic) limit, \eqref{eqn:rLE:N-particle:original} is formally reduced to a system of interacting particles governed by the classical Langevin dynamics
\begin{align} \label{eqn:LE:original}
\d\, q_i(t) &= \frac{1}{m}p_i(t) \d t, \notag\\
\d\, p_i(t) & = - \frac{\gamma}{m} p_i(t) \d t -\grad \U(q_i(t))\d t- \sum_{j\neq i}\grad \G\big(q_i(t)-q_j(t)\big) \d t +\sqrt{2\gamma} \,\d W_{i}(t).
\end{align}

On the one hand, we note that the large time asymptotic behavior of \eqref{eqn:LE:original} is relatively well-understood. More specifically, under suitable assumptions on $U$ and $G$, \eqref{eqn:LE:original} is exponentially attractive toward the unique Boltzmann-Gibbs invariant probability measure $\pi_{\text{LE}}$ given by
\begin{align*} 
\pi_{\text{LE}}(\d\qb,\d\pb)=\frac{1}{Z_{\text{LE}}}\exp\Big\{-\Big(\frac{1}{2m}|\pb|^2+\sum_{i=1}^N U(q_i)+\close\sum_{1\le i<j\le N}\close G(q_i-q_j)\Big) \Big\}\d \qb\d \pb,
\end{align*}
where $Z_{\text{LE}}$ is the normalization constant, $\qb=(q_1,\dots,q_N)$, and $\pb=(p_1,\dots,p_N)$ \cite{bolley2018dynamics, cooke2017geometric,herzog2017ergodicity,lu2019geometric,
pavliotis2014stochastic}. 

On the other hand, while it can be shown that the stationary solution of \eqref{eqn:rLE:N-particle:original} is the expected relativistic Maxwellian distribution
\begin{align} \label{form:pi_RLE}
\pi_{ \text{RLE} }(\d\qb,\d\pb)=\frac{1}{Z}\exp\Big\{-\Big(\sum_{i=1}^N K(p_i)+\sum_{i=1}^N U(q_i)+\close\sum_{1\le i<j\le N}\close G(q_i-q_j)\Big) \Big\}\d \qb\d \pb,
\end{align}
to the best of the authors' knowledge, the topic of ergodicity for \eqref{eqn:rLE:N-particle:original} in the presence of singular potentials has not been investigated, nor has the question of whether \eqref{eqn:rLE:N-particle:original} resembles \eqref{eqn:LE:original} with respect to the Newtonian limit been studied before. Addressing these two important questions is highly nontrivial due to the nonlinearities and singularities (and their interplay) of the relevant potentials.

 Our goal of the paper is thus two-fold: firstly, we seek sufficient conditions on both the confining potential $U$ and the singular potential $G$ so as to produce a mixing rate toward $\pi_{ \text{RLE} }$ for \eqref{eqn:rLE:N-particle:original}. Secondly, we demonstrate that in the Newtonian regime $c\to\infty$, \eqref{eqn:rLE:N-particle:original} is well approximated by \eqref{eqn:LE:original} on any finite time windows. In what follows, we shall summarize our main results, whose rigorous statements are deferred to Section \ref{sec:result}.

\subsection{Summary of main results} \label{sec:intro:result}

With regard to the ergodicity of \eqref{eqn:rLE:N-particle:original}, unlike the Langevin system \eqref{eqn:LE:original} which possesses a geometric mixing rate, we find that provided that the speed of light constant $c$ is sufficiently large, the dynamics \eqref{eqn:rLE:N-particle:original} converges toward $\pi_{ \text{RLE} }$ with at least algebraic rates of any order. This is summarized in the following result:
\begin{theorem} \label{thm:polynomial-mixing:N-particle:meta}
Under suitable assumptions on $U$ and $G$, let $(\qb_c,\pb_c)$ be the solutions of \eqref{eqn:rLE:N-particle:original}. Then, for all $c$ sufficiently large, bounded function $f$, and positive constant $r>0$, 
\begin{align} \label{ineq:polynomial-ergodicity:meta}
\Big|\E f(\qb_c(t),\pb_c(t)) - \int f(\qb,\pb)\pi_{ \textup{RLE} }(\textup{d}\pb,\textup{d}\pb ) \Big| \le \frac{C}{(1+t)^r},\quad t\ge 0,
\end{align}
where $C$ is a positive constant independent of $t$.

\end{theorem}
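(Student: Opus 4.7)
The overall strategy is to invoke a Harris-type theorem for sub-geometric (polynomial) ergodicity, in the spirit of Douc-Fort-Moulines-Soulier and its adaptations to hypoelliptic SDEs, which reduces the claim \eqref{ineq:polynomial-ergodicity:meta} to the verification of two ingredients: (i) a drift condition for a suitable family of Lyapunov functions $\{V_r\}_{r>0}$ satisfying $\mathcal{L}_c V_r \le -C_r V_r^{1-\alpha_r} + D_r \mathbf{1}_{K_r}$ on the domain $\rbb^{2dN}\setminus\{\text{collisions}\}$, and (ii) a minorization condition for the Markov semigroup on the sublevel sets $K_r = \{V_r \le R_r\}$. Provided both hold, the resulting polynomial decay rate is governed by the exponent $\alpha_r$, and choosing the parameters so that $\alpha_r$ can be made arbitrarily small yields rates of every polynomial order.

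The main obstacle, and the heart of the argument, is the construction of the Lyapunov functions $V_r$. Two distinct difficulties must be reconciled simultaneously. First, the relativistic gradient $\grad K(p) = cp/\sqrt{m^2c^2+|p|^2}$ is uniformly bounded, so the momentum damping is only bounded rather than linear as in the classical case; this is what destroys exponential ergodicity and forces the polynomial framework. Second, the singularity of $G$ at collision configurations $q_i = q_j$ means that naive choices like $V_r = (1+H)^{\beta_r}$, with $H$ the Hamiltonian in \eqref{form:pi_RLE}, do not produce a controllable drift. To overcome this I would build $V_r$ as a modification of the Hamiltonian of the form $V_r = (1 + H + \Psi)^{\beta_r}$ where $\Psi$ is an antisymmetric correction involving cross terms such as $\sum_i p_i \cdot \grad U(q_i)$ and $\sum_{i\neq j} p_i \cdot \grad G(q_i - q_j)$, following the template developed for the classical Langevin system with singular potentials by Herzog-Mattingly and Cooke-Herzog-Mattingly-Nguyen. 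The role of $\Psi$ is to transfer dissipation from momentum into position via the mixed term $-\grad\Psi \cdot \grad K$ and simultaneously to dominate the blow-up contributions generated by $\grad G$ near the collision set. A direct computation will then show $\mathcal{L}_c V_r$ is bounded above, on large sublevel sets, by a negative multiple of $V_r^{1-\alpha_r}$, with $\alpha_r = \alpha_r(\beta_r) \to 0$ as $\beta_r \to \infty$, yielding any prescribed polynomial rate $r$. The hypothesis that $c$ is sufficiently large will enter precisely here: it controls the relative size of the bounded relativistic damping against the cross terms in $\mathcal{L}_c\Psi$ arising from $\grad U$ and $\grad G$, allowing one to absorb the latter.

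With the drift inequality in hand, the minorization condition on each sublevel set $K_r$ is obtained by standard means. Since $K_r$ is compact and bounded away from the singular set by construction of $V_r$, the coefficients of \eqref{eqn:rLE:N-particle:original} are smooth there; H\"ormander's bracket condition is satisfied because the Brownian noise acts on the momentum variables and the drift $\grad K(p)$ couples momentum into position, so the transition kernel admits a smooth positive density. Combined with controllability of the noisy system from any point in $K_r$ to a common neighborhood, this yields a lower bound $P_{t_0}(x, \cdot) \ge \eta\, \nu(\cdot)$ uniformly on $K_r$. Feeding the drift and minorization into the polynomial Harris theorem then produces \eqref{ineq:polynomial-ergodicity:meta} for Lipschitz observables $f$ controlled by $V_r$, completing the proof.
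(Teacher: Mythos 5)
Your overall architecture --- a subgeometric Harris theorem, a drift condition $\L V\le -cV^{\alpha}+C$ with exponent tending to $1$, and hypoellipticity via H\"ormander plus controllability for the minorization --- is exactly the route the paper takes (it invokes \cite[Theorem 3.5]{hairer2009hot}, which packages the Douc--Fort--Moulines--Soulier machinery; the H\"ormander and control steps are Lemmas \ref{lem:Hormander} and \ref{lem:control-problem}). The gap sits in the one step you leave as ``a direct computation will then show'': the Lyapunov construction itself. Your ansatz $V_r=(1+H+\Psi)^{\beta_r}$ fails as written, because the relativistic Hamiltonian grows only \emph{linearly} in $|p|$ (it is $\sim c|p|+U+G$ for large momenta), so it does not dominate cross terms of the form $\la p_i,\grad U(q_i)\ra\sim |p|\,|q|^{\lambda}$, nor even $\la q,p\ra$: by Young's inequality these are of order $|p|^{\lambda+1}+|q|^{\lambda+1}$, and $|p|^{\lambda+1}\gg |p|$. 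Hence $1+H+\Psi$ is not bounded below, and the generator applied to $\Psi$ produces positive terms of order $|p|$ and higher that the relativistic dissipation $-|\grad K(p)|^2$, which is merely \emph{bounded}, cannot absorb. This is precisely the obstruction the paper isolates after \eqref{form:V:Lu2019}; its resolution is to take a \emph{power} of the Hamiltonian as the leading term ($H^2$ in \eqref{form:V_1} for $N=1$, $H_N^3$ in \eqref{form:V_N} for $N\ge2$) and to weight the cross terms by $\varepsilon=1/c^2$ so that the quadratic (resp.\ cubic) growth in $|p|$ subsumes the perturbation. Relatedly, ``$c$ large'' does not strengthen the friction --- $-\grad K(p)$ stays bounded for every $c$ --- it enters only through the smallness of the weights $\varepsilon$, $\varepsilon^2$ in the corrector.

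Two further points. First, your correction $\sum_{i\neq j}\la p_i,\grad G(q_i-q_j)\ra$ is more singular than anything the paper permits itself: applying the generator produces $\grad K(p)^{T}\nabla^{2}G\,p$, i.e.\ terms of order $|p|\,|q_i-q_j|^{-\beta_1-2}$ near collisions, which are not dominated by the available good term $-|\grad G|^{2}\sim -|q_i-q_j|^{-2\beta_1}$. The paper instead uses the milder cross term $\la q_i-q_j,p_i-p_j\ra/|q_i-q_j|^{\beta_1-1}$ with an $\varepsilon^{2}$ weight, and even then the interplay between the bounded relativistic drift and the singularity forces the additional Assumption \nameref{cond:G2} ($\beta_1\in(1,2]$, $\beta_2\in[0,\beta_1-1)$) when $N\ge2$; no choice of large $c$ removes this restriction, and your sketch does not flag that the multi-particle case needs hypotheses beyond those sufficient for $N=1$ (in particular the Lennard--Jones and logarithmic interactions are excluded for $N\ge 2$). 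Second, and minor: the paper's drift inequality is global, $\L V\le -cV^{\alpha}+C$, rather than localized with an indicator of a sublevel set, but that difference is cosmetic.
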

We refer the reader to Theorem \ref{thm:polynomial-mixing:N-particle} for a precise statement of the above result. We note that in the single-particle case, i.e., $N=1$,
\begin{align*} 
\d\, q(t) &= \grad K(p(t))\d t, \\
\d\, p(t) & = -\gamma \grad K(p(t)) \d t -\grad U(q(t))\d t- \grad G\big(q(t)\big) \d t +\sqrt{2\gamma} \,\d W(t),
\end{align*}
estimate \eqref{ineq:polynomial-ergodicity:meta} applies to a wide range of smooth polynomial potential $U$ and singular potential $G$, including the instances of the Lennard-Jones and the Coulomb functions. See Assumptions \nameref{cond:U} and \nameref{cond:G1}. However, in the multi-particle situation, i.e., $N\ge2$, owing to the complication between the kinetic relativistic energy $K(p)$ and the interaction potential $G(q)$, \eqref{ineq:polynomial-ergodicity:meta} requires a restriction on the behavior of $G(q)$ near the origin, cf. Assumption \nameref{cond:G2}.

Turning to the Newtonian limit as $c$ tends to infinity, we obtain the following result giving the validity of the approximation of \eqref{eqn:rLE:N-particle:original} by \eqref{eqn:LE:original} on any compact intervals.
\begin{theorem} \label{thm:newtonian-limit:meta}
Under suitable assumptions on $U$ and $G$, let $(\qb_c(t),\pb_c(t))$ and $(\qb(t),\pb(t))$ respectively be the solutions of \eqref{eqn:rLE:N-particle:original} and \eqref{eqn:LE:original}.

1. (Multi-particle system) For all $N\ge 2$, $T>0$, and $\xi>0$,
\begin{align} \label{lim:newtonian-limit:N-particle:prob:meta}
\P\Big( \sup_{t\in[0,T]}\Big[ |\qb_c(t)-\qb(t)|+|\pb_c(t)-\pb(t)|\Big]>\xi\Big) \rightarrow 0,\quad c\rightarrow \infty.
\end{align}

2. (Single-particle system) When $N=1$, for all $T>0$ and $n\ge 1$,
\begin{align} \label{lim:newtonian-limit:single-particle:L^p:meta}
\E \sup_{t\in[0,T]}\big[ |q_c(t)-q(t)|^n+|p_c(t)-p(t)|^n\big]\rightarrow 0,\quad c\rightarrow \infty.
\end{align}

\end{theorem}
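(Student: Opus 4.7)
The strategy is to couple the relativistic and Langevin systems by driving them with the \emph{same} Brownian motions $\{W_i\}$, so that the stochastic integrals cancel and the differences $\Delta\qb:=\qb_c-\qb$, $\Delta\pb:=\pb_c-\pb$ satisfy a random ODE. The quantitative ingredient controlling the limit is the elementary identity
\[
\grad K(p)-\frac{p}{m}=-\frac{|p|^{2}\,p}{m^{2}\sqrt{m^{2}c^{2}+|p|^{2}}\,\bigl(mc+\sqrt{m^{2}c^{2}+|p|^{2}}\bigr)},
\]
which yields the pointwise bound $|\grad K(p)-p/m|\le C_m|p|^{3}/c^{2}$ for all $c\ge 1$. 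Writing $\grad K(p_{c,i})-p_i/m=[\grad K(p_{c,i})-p_{c,i}/m]+(1/m)\Delta p_i$ isolates an explicit $O(1/c^{2})$ forcing in the $\Delta$-equation and leaves the remaining terms Lipschitz (locally in $\qb$, in the singular case) in $\Delta$.

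\textbf{Uniform-in-$c$ moment bounds.} Before closing a Gr\"onwall estimate I would establish $\sup_{c\ge c_0}\E\sup_{t\le T}\bigl(|\pb_c(t)|^{n}+\sum_{i<j}|G(q_{c,i}-q_{c,j})|^{n}\bigr)<\infty$ for every $n\ge 1$. The natural candidate is the shifted relativistic Hamiltonian $\Hcal_c=\sum_i[K(p_i)-mc^{2}]+\sum_i U(q_i)+\sum_{i<j}G(q_i-q_j)$, which is nonnegative and satisfies $K(p)-mc^{2}\gtrsim |p|^{2}/(1+|p|/c)$, yielding coercive control on $|\pb|$. Applying It\^o to $(\Hcal_c+C)^{n}$ yields the dissipative term $-\gamma|\grad K(p_i)|^{2}$ which, combined with the assumptions on $U$ and $G$ already used for Theorem \ref{thm:polynomial-mixing:N-particle:meta}, propagates the moment bound with constants \emph{independent} of $c$. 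Identical bounds for $(\qb,\pb)$ follow from the standard Langevin theory.

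\textbf{Single-particle case.} With $N=1$ there is no interaction term and $\grad U$ is locally Lipschitz with polynomial growth. Differentiating $|\Delta q|^{n}+|\Delta p|^{n}$ in $t$, using $|\grad U(q_c)-\grad U(q)|\le (1+|q_c|^{k}+|q|^{k})|\Delta q|$ and $|\grad K(p_c)-p_c/m|\le C_m|p_c|^{3}/c^{2}$, then H\"older's inequality with the moment bounds of the previous step, and Gr\"onwall's lemma, produces $\E\sup_{t\le T}(|\Delta q|^{n}+|\Delta p|^{n})=O(c^{-2n})$, which is exactly \eqref{lim:newtonian-limit:single-particle:L^p:meta}.

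\textbf{Multi-particle case and main obstacle.} For $N\ge 2$ the singularity of $\grad G$ at the origin blocks a direct $L^{n}$ estimate, so I would localise by the stopping time
\[
\tau_R=\inf\Bigl\{t\ge 0:\,|\pb_c(t)|+|\pb(t)|\ge R\ \text{ or }\ \min_{i\ne j}\bigl(|q_{c,i}-q_{c,j}|\wedge|q_i-q_j|\bigr)\le 1/R\Bigr\}.
\]
On $[0,T\wedge\tau_R]$ both $\grad U$ and $\grad G$ are Lipschitz with constant $L(R)$, and the same Gr\"onwall argument as in the single-particle case gives $\E\sup_{t\le T\wedge\tau_R}(|\Delta\qb|+|\Delta\pb|)\le C(R,T)/c^{2}\to 0$. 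Combined with $\sup_{c\ge c_0}\P(\tau_R\le T)\to 0$ as $R\to\infty$, a $\xi/2$ splitting of the event in \eqref{lim:newtonian-limit:N-particle:prob:meta} delivers the required convergence in probability. The principal difficulty is this uniform non-explosion estimate: because $\grad K(p)\cdot p$ is comparable to $|p|^{2}$ only for $|p|\ll c$ and saturates at $c|p|$ for $|p|\gg c$, the kinetic dissipation in the Lyapunov drift degenerates at high momenta in a $c$-dependent way, and tuning the Lyapunov functional so that every constant is uniform in $c$ as $c\to\infty$ is the most delicate step of the argument.
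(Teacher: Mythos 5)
Your overall architecture (synchronous coupling through the same Brownian motions, the pointwise bound $|\grad K(p)-p/m|\le C|p|^{3}/c^{2}$, Gr\"onwall on a Lipschitz/localized system, then removal of the localization via stopping times and moment bounds) is the same as the paper's, which works with truncated nonlinearities (Proposition \ref{prop:newtonian-limit:Lipschitz}) rather than stopped processes. However, there are two genuine gaps.

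First, your single-particle argument is wrong as stated: you write ``with $N=1$ there is no interaction term,'' but in this paper the $N=1$ system \eqref{eqn:rLE:single-particle:epsilon} retains the singular potential $\grad G(q)$, singular at the origin. Hence $\grad U+\grad G$ is \emph{not} locally Lipschitz with polynomial weights on all of $\rbb^d$, the direct differentiation of $|\Delta q|^{n}+|\Delta p|^{n}$ plus H\"older plus Gr\"onwall does not close, and the advertised rate $O(c^{-2n})$ for the untruncated system is not obtained (the paper only gets a rate for the truncated dynamics and a $1/\sqrt{\log R}$-type remainder overall). One still has to truncate near $\{q=0\}$ and control $\E\sup_{t\le T}[U(q^{\varepsilon})+G(q^{\varepsilon})]$ and $\E\sup_{t\le T}|p^{\varepsilon}|^{2n}$ uniformly in $c$; this is exactly the content of the paper's Lemma \ref{lem:rLE:single-particle:moment-estimate:sup_[0,T]}, whose functional $\Gamma_2$ deliberately includes the products $\varepsilon(U+G)^{2}$ and $(U+G)\sqrt{1+\varepsilon|p|^{2}}$ so that the transport and force terms cancel under $\L_1$ — a cancellation that is specific to $N=1$.

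Second, and more seriously, your entire multi-particle argument rests on the uniform non-explosion estimate $\sup_{c}\P(\tau_R\le T)\to 0$, which requires uniform-in-$c$ control of $\sup_{t\le T}|\pb_c|$ and of $\sup_{t\le T}G(q_{c,i}-q_{c,j})$ for the \emph{relativistic} $N$-particle system. You assert this via ``applying It\^o to $(\Hcal_c+C)^{n}$'' and then yourself flag it as ``the most delicate step'' without proving it. The paper explicitly states (see the discussion around \eqref{ineq:moment-bound:q_c,p_c}) that the corresponding uniform energy estimate is unavailable for $N\ge 2$ — this is precisely why Theorem \ref{thm:newtonian-limit} only asserts convergence in probability there — and its proof is engineered to avoid it entirely: the stopping times are defined through positions only, the momentum moments are proved only for the \emph{truncated} (Lipschitz) system via the exponential martingale inequality, and the exit event for the relativistic solution is transferred to the Langevin solution by a triangle-inequality argument on the truncated dynamics (the events $B_1$, $B_{ij}$), so that only the $c$-independent Langevin moment bound of Lemma \ref{lem:LE:N-particle:moment-estimate} together with the logarithmic lower bound of Lemma \ref{lem:U+G>|q|-log(q)} is ever needed. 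A bare Gr\"onwall on $\E(\Hcal_c+C)^{n}$ does not yield the required $\E\sup_{t\le T}$ control in any case (the martingale part must be handled, e.g.\ by an exponential-martingale or BDG argument with constants shown to be uniform as $|\grad K(p)|^{2}$ saturates at $c^{2}$), so as written this step is a hole. You should either supply that uniform estimate in full or adopt the paper's transfer argument.
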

In \eqref{lim:newtonian-limit:N-particle:prob:meta} and \eqref{lim:newtonian-limit:single-particle:L^p:meta}, we adopt the convention that $(\qb,\pb)$ and $(q,p)$ respectively refer to the multi-particle and single-particle processes. The rigorous version of Theorem \ref{thm:newtonian-limit:meta} is formulated in Theorem \ref{thm:newtonian-limit}. We note that while the limit \eqref{lim:newtonian-limit:single-particle:L^p:meta} in the single-particle case is a convergence in expectation, the analogue \eqref{lim:newtonian-limit:N-particle:prob:meta} for $N\ge 2$ is only a convergence in probability. This amounts to the fact that the case $N\ge 2$ induces further difficulties surrounding the interaction between the kinetic relativistic energy and the repulsive force. Nevertheless, unlike the mixing result \eqref{ineq:polynomial-ergodicity:meta} where we have to impose further assumption on the singularities, both limits \eqref{lim:newtonian-limit:N-particle:prob:meta} and \eqref{lim:newtonian-limit:single-particle:L^p:meta} are applicable to a large class of irregular potentials, of which, the Lennard-Jones and the Coulomb functions are members.

\subsection{Related literature} \label{sec:intro:literature}
It is important to note the physics background of equation \eqref{eqn:rLE:N-particle:original}, which is a special case of the more general relativistic model
\begin{align} \label{eqn:rLE:N-particle:general}
\d\, q_i(t) &= \nabla K(p_i)\d t, \notag \\
\d\, p_i(t) & = -\gamma D(p_i)\nabla K(p_i) \d t + \gamma\div(D(p_i))\d t  +\sqrt{2\gamma D(p_i)}\d W_{i}(t)\notag \\
& \qquad -\grad \U(q_i(t))\d t- \sum_{j\neq i}\grad \G\big(q_i(t)-q_j(t)\big) \d t,
\end{align}
where $D$ is the diffusion matrix. The specific structure of \eqref{eqn:rLE:N-particle:general}, particularly through the manifestation of the (relativistic) fluctuation-dissipation relation (the scaling of the matrix $\gamma D$ in the drift and the diffusion terms), guarantees that, among other properties, it possesses the relativistic Maxwellian distribution as a stationary solution.  In the literature, there are two popular choices of $D$, corresponding to considering either an additive noise or a multiplicative noise in the model above, namely, 
\[D(p)=I, ~\text{as in \cite{debbasch1997relativistic}}\]
\begin{align} \label{form:D(p)}
\text{or}\quad D(p)=\frac{mc}{\sqrt{m^2c^2+|p|^2}}\Big(I+\frac{p\otimes p}{m^2c^2}\Big), ~\text{as in \cite{dunkel2005atheory,dunkel2005theory}}.
\end{align}
%In the literature, there are several papers that study the convergence to the equilibrium and the Newtonian limit for the above relativistic Langevin dynamics and the associated (relativistic) Fokker-Planck equation for either an additive or a multiplicative noise, but in the setting of \textit{single-particle} ($N=1$) \textit{and without singularities} ($G=0$).

The topics of large-time asymptotic and the Newtonian limit for \eqref{eqn:rLE:N-particle:general} and the associated (relativistic) Fokker-Planck equation have been studied in several papers, though in the setting of \textit{single-particle} ($N=1$) \textit{and without singularities} ($G=0$).
In the case of an additive noise ($D=I$), these problems were formally discussed in the original paper \cite{debbasch1997relativistic}. Moreover, an $H$-theorem for the solution process is shown in \cite{barbachoux2001covariant}. A rigorous proof of the exponential convergence toward equilibrium for the spatially homogeneous process is established in \cite{angst2011trends} (this paper also covers the case of a variable $D$). Concerning the instance of a multiplicative noise, i.e., $D=D(p)$ given by \eqref{form:D(p)}, the well-posedness of the Fokker-Planck equation is established in \cite{alcantara2011relativistic} whereas in the absence of potentials $U$, the momentum processes are shown to exhibit a geometric mixing rate \cite{angst2011trends,
felix2013newtonian}. Similar asymptotic behavior is also obtained when the position $q$ is posed on a torus \cite{calogero2012exponential} making use of the hypocoercive method \cite{villani2009hypocoercivity}. In a more recent preprint~\cite{arnold2024trend}, the technique has been extended by means of Lyapunov functionals to successfully treat the case of smooth potential $U$. The Newtonian limit was also investigated rigorously in \cite{felix2013newtonian} where the solution is proven to be arbitrarily close to the Langevin dynamics in $L^1$ norm. 

Regarding $N\ge 2$, relativistic deterministically $N$-particle systems with singular interactions (i.e., \eqref{eqn:rLE:N-particle:original} with $\gamma=0$) have also been studied considerably, often in the context of the mean-field limits and their connections to the relativistic Vlasov systems. See for instance~\cite{chen2020combined,elskens2009vlasov,golse2012mean,
kiessling2008relativistic,lieb1988stability}.

The main novelty in our paper from literature is that we deal with a stochastic system of multiple particles in the presence of irregular potentials. The singularity and its nontrivial interplay with the kinetic energy require much more delicate estimates on the solutions as well as their relations with the classical (non-relativistic) Langevin dynamics. In what follows, we briefly review the methodology that we employ to establish the main results, whose detailed explanations will be carried out in Section \ref{sec:poly-mixing} and Section \ref{sec:newtonian-limit}.

\subsection{Methodology of the proofs} \label{sec:intro:proof}

With regard to the algebraic mixing rate, as captured through Theorem \ref{thm:polynomial-mixing:N-particle:meta}, we adopt the framework of \cite{hairer2009hot}, which in turn was built upon the technique developed in \cite{ bakry2008rate, douc2009subgeometric, fort2005subgeometric}. The proof of estimate \eqref{ineq:polynomial-ergodicity:meta} consists of three main ingredients, namely, the Hormander's condition ensuring the smoothness of the transition probabilities, the solvability of the associated control problem giving the possibility of returning to the center of the phase space, and a suitable Lyapunov function quantifying the convergence rate. See Definitions \ref{def:Hormander}, \ref{def:control-problem} and \ref{def:Lyapunov}, respectively. In particular, the first criterion is a consequence of the classical Hormander's Theorem \cite{hormander1967hypoelliptic} asserting that the phase space may be generated by the collection of vector fields jointly induced by the diffusion and the drifts. Since we are dealing with a finite-dimensional setting, this is a short computation on Lie brackets, cf. Lemma \ref{lem:Hormander}. The second condition on the associated control problem can be established by the Support Theorem \cite{stroock1972degenerate} showing that one can always find appropriate controls allowing for driving the dynamics to any bounded ball. In order to do so, we follow closely the approach of \cite{herzog2017ergodicity} dealing with the same issue for the Langevin dynamics \eqref{eqn:LE:original}. However, since \eqref{eqn:rLE:N-particle:original} is slightly more complicated than \eqref{eqn:LE:original} owing to the appearance of the relatisvistic forcing term, we have to modify the argument therein tailored to our settings, cf. Lemma \ref{lem:control-problem}. The last ingredient is the crucial Lyapunov functions, on which the result of Theorem \ref{thm:polynomial-mixing:N-particle:meta} mainly relies. Traditionally, an exponential convergence rate requires the construction of an energy-like function $V$ satisfying an inequality of the form
\begin{align} \label{ineq:Lyapunov:exponential-mixing}
\frac{\d}{\d t}\E \big[V(\qb_c(t),\pb_c(t))\big] \le  -c_1\E \big[V(\qb_c(t),\pb_c(t))\big]+c_2,\quad t\ge 0.
\end{align}
Although this Lyapunov effect is available for equation \eqref{eqn:LE:original} and related dynamics \cite{cooke2017geometric, duong2024asymptotic,herzog2017ergodicity, lu2019geometric}, the lack of strong dissipation in the $p$-direction as well as the impact of the singularities do not allow access to an estimate of \eqref{ineq:Lyapunov:exponential-mixing}-typed for \eqref{eqn:rLE:N-particle:original}. To circumvent the issue, we resort to the technique of \cite{hairer2009hot} by proving instead the existence of a function $V$ such that
\begin{align} \label{ineq:Lyapunov:polynomial-mixing}
\frac{\d}{\d t}\E\big[ V(\qb_c(t),\pb_c(t)) \big]\le  -c_1\E\big[ V(\qb_c(t),\pb_c(t))^\alpha\big]+c_2,\quad t\ge 0,
\end{align}
for a suitable constant $\alpha\in(0,1)$. See also Remark \ref{rem:polynomial-mixing}, part 2. While the Lyapunov function for the case $N=1$ resembles those of \cite{duong2024asymptotic,lu2019geometric}, the choice of $V$ for the multi-particle situation turns out to be much more challenging. This difficulty arises from the interaction between the relativistic energy and the singular potentials, producing badly behavior terms which are otherwise canceled in the case $N=1$. Nevertheless, their affect can be dominated by imposing further restriction on $G$, ultimately allowing for deducing \eqref{ineq:Lyapunov:polynomial-mixing}. We refer the reader to Section \ref{sec:result:assumption} for the precise assumptions on the nonlinearities and to Section \ref{sec:result:ergodicity} for the rigorous statement of \eqref{ineq:polynomial-ergodicity:meta}, whose proof is presented in Section \ref{sec:poly-mixing}.

The last main result of this article concerns the Newtonian limit as the speed of light $c$ tends to infinity. The proof of Theorem \ref{thm:newtonian-limit:meta} is motivated by the strategy of \cite{duong2024asymptotic,herzog2016small} developed for the small mass limits of second-order SDEs. The general argument essentially consists of two main steps: firstly, we truncate the nonlinearities in \eqref{eqn:rLE:N-particle:original} and \eqref{eqn:LE:original}, resulting in Lipschitz systems, whose convergence in expectations can be handled directly. Then, we remove the Lipschitz constraint by making use of suitable moment bounds on the dynamics. Particularly, with regard to the multi-particle case, instead of controlling the energy of \eqref{eqn:rLE:N-particle:original}, we turn to estimating \eqref{eqn:LE:original} through its Hamiltonian structure. Combining with a careful analysis on the relation between \eqref{eqn:rLE:N-particle:original} and \eqref{eqn:LE:original}, we are able to obtain the convergence in probability \eqref{lim:newtonian-limit:N-particle:prob:meta} while making use of auxiliary inequalities on the singularities. Turning to the simpler single-particle setting, the previous probability argument can be upgraded to a convergence in $L^p$ by facilitating critical moment bounds on \eqref{eqn:rLE:N-particle:original} that maintain the uniformity in the parameter $c$. More specifically, such an estimate has the form
\begin{align} \label{ineq:moment-bound:q_c,p_c}
\lim_{c\to\infty}\E \sup_{t\in[0,T]} \Big(\big[U(q_c(t))+G(q_c(t))\big]^2+\big[U(q_c(t))+G(q_c(t))\big]K(p_c(t))+|p_c(t)|^2\Big)<\infty.
\end{align}
We note that while the left-hand side of \eqref{ineq:moment-bound:q_c,p_c} is bounded when $N=1$, it may be arbitrarily large depending on $c$ when $N\ge 2$, stemming from the same issue on the interplay between the relativistic kinetic and potential energies that we face in the mixing problem. The reader is referred to Theorem \ref{thm:newtonian-limit} for the precise statement of \eqref{lim:newtonian-limit:N-particle:prob:meta}-\eqref{lim:newtonian-limit:single-particle:L^p:meta} and to Section \ref{sec:newtonian-limit} for their proofs.

Lastly, we remark that the results of this article are established for equation \eqref{eqn:rLE:N-particle:general} in the case of identity diffusion matrix $D=I$. This leads to the question whether a similar approach can be applied to the nonlinear $D(p)$ given by \eqref{form:D(p)}. In this regard, we anticipate the difficulties that we face in studying \eqref{eqn:rLE:N-particle:original} will carry over to the setting of \eqref{eqn:rLE:N-particle:general}-\eqref{form:D(p)}. While it is certainly interesting to explore the asymptotic analysis for \eqref{eqn:rLE:N-particle:general}-\eqref{form:D(p)}, we opt for leaving the problems as future work.

\subsection{Organization of the paper}\label{sec:intro:organization}
The rest of the paper is organized as follows: in Section \ref{sec:result}, we introduce the notations as well as the assumptions that we make on the nonlinearities. We also state the main results of the paper, including Theorem \ref{thm:polynomial-mixing:N-particle} on polynomial mixing of \eqref{eqn:rLE:N-particle:original} and Theorem \ref{thm:newtonian-limit} on the validity of the approximation of \eqref{eqn:rLE:N-particle:original} by \eqref{eqn:LE:original} in the Newtonian regime. In Section \ref{sec:poly-mixing}, we detail the construction of Lyapunov functions for \eqref{eqn:rLE:N-particle:original} and prove Theorem \ref{thm:polynomial-mixing:N-particle}. In Section \ref{sec:newtonian-limit}, we perform a series of estimates on auxiliary systems that we employ to deduce the convergence of \eqref{eqn:rLE:N-particle:original} toward \eqref{eqn:LE:original}. We also supply the detailed proof of Theorem \ref{thm:newtonian-limit} in this section. The paper concludes with Appendices \ref{sec:appendix} and \ref{sec:well-posed}. In Appendix \ref{sec:appendix}, we provide two useful estimates on singular potentials that are exploited to establish the main results, whereas in Appendix \ref{sec:well-posed}, we supply the argument for the well-posedness of the system.

\section{Assumptions and main results} \label{sec:result}

Throughout, we let $(\Omega, \mathcal{F}, (\mathcal{F}_t)_{t\geq 0},  \P)$ be a filtered probability space satisfying the usual conditions \cite{karatzas2012brownian} and $(W_{i}(t))$, $i=1,\dots, N$, be i.i.d standard $d$-dimensional Brownian Motions on $(\Omega, \mathcal{F},\P)$ adapted to the filtration $(\mathcal{F}_t)_{t\geq 0}$.

Since the mass $m$ and the friction constant $\gamma$ in \eqref{eqn:rLE:N-particle:original} do not affect the analysis, we set $m=\gamma=1$ for simplicity. Also, for notational convenience, we denote $\varepsilon:=1/c^2$ and recast \eqref{eqn:rLE:N-particle:original} as follows:
\begin{align} \label{eqn:rLE:N-particle:epsilon}
\d\, q_i(t) &= \frac{p_i(t)}{\sqrt{1+\varepsilon |p_i(t)|^2}}\d t,\qquad i=1,\dots,N, \notag\\
\d\, p_i(t) & = -\frac{p_i(t)}{\sqrt{1+\varepsilon |p_i(t)|^2}} \d t -\grad U(q_i(t))\d t- \sum_{j\neq i} \grad G\big(q_i(t)-q_j(t)\big) \d t +\sqrt{2} \,\d W_i(t).
\end{align}

In Section \ref{sec:result:assumption}, we detail various assumptions on the nonlinearities $U$ and $G$ that we will employ throughout the analysis. The well-posedness of \eqref{eqn:rLE:N-particle:epsilon} (and equivalently of \eqref{eqn:rLE:N-particle:original}) is briefly presented in Proposition \ref{prop:rLE:N-particle:well-posed}. In Section \ref{sec:result:ergodicity}, we state the first main result through Theorem~\ref{thm:polynomial-mixing:N-particle} giving the unique ergodicity, as well as the polynomial mixing rate in total variation distances. Particularly, while the ergodicity result for the single-particle system applies to a wide class of singular potential $G$, the case of multiple particles requires further restrictions on $G$. In Section \ref{sec:result:newtonian-limit}, we establish our second main result through Theorem \ref{thm:newtonian-limit} concerning the validity of the Newtonian limit of \eqref{eqn:rLE:N-particle:original} on any finite time window. More specifically, depending on the number $N$ of particles, the convergence is obtained either in probability $(N\ge2)$ or in $L^p$ $(N=1)$.

\subsection{Main assumptions} \label{sec:result:assumption} We start with the potential $U$ and impose the following conditions \cite{duong2024asymptotic,ottobre2011asymptotic,
pavliotis2014stochastic}.

\subsection*{(U)}  \label{cond:U}
\textup{(i)} There exist positive constants $a_1$ and $\lambda\ge 1$ such that $U\in C^\infty(\rbb^d;[0,\infty))$ satisfies for  all $q\in\rbb^d$
\begin{align}
\frac{1}{a_1}|q|^{\lambda+1}-a_1 &\le  |U(q)|\le a_1(1+|q|^{\lambda+1}),\label{cond:U:U(x)=O(x^lambda+1)}  \\
\text{and}\quad |\grad \U(q)| & \le a_1(1+|q|^{\lambda}).\label{cond:U:U'(x)=O(x^lambda)} 
\end{align}

\textup{(ii)} Furthermore,
\begin{equation} \label{cond:U:x.U'(x)>-x^(lambda+1)}
\la \grad\U(q), q\ra \ge a_2|q|^{\lambda+1}-a_3,\quad q\in\rbb^d,
\end{equation}
for some positive constants $a_2$ and $a_3$.

We note that Assumption \nameref{cond:U} requires that $U$ behave like a polynomial of the form $|q|^{\lambda+1}$, which is quite popular and can be found in a lot of previous works for SDEs \cite{glatt2020generalized,mattingly2002ergodicity} as well as for SPDEs \cite{hairer2002exponentialII,hairer2002exponentialI}. Concerning the singular potential $G$, we will make the following condition \cite{bolley2018dynamics,duong2024asymptotic,
herzog2017ergodicity,lu2019geometric}.

\subsection*{(G1)} \label{cond:G1}
  \textup{(i)} $G\in C^\infty(\rbb^d\setminus\{0\};\rbb)$ satisfies that $\G(q)\to \infty$ as $|q|\to 0$, $G$ is even and that $\grad G$ is odd. Furthermore, there exists a positive constant $\beta_1\ge 1$ such that for all $q\in\rbb^d\setminus\{0\}$
\begin{align} 
|\G(q)|&\le a_1\Big(1+|q|+\frac{1}{|q|^{\beta_1}}\Big) ,\label{cond:G:G<1/|x|^beta}\\
\text{and} \quad|\grad \G(q)| &\le a_1\Big(1+\frac{1}{|q|^{\beta_1}}\Big),\label{cond:G:grad.G(x)<1/|x|^beta}
\end{align}
where $a_1$ is the constant as in Assumption~\nameref{cond:U}.

\textup{(ii)} There exist constants $\beta_2\in[0,\beta_1)$, $a_4>0$, $a_5\in\rbb$ and $a_6>0$ such that 
\begin{equation} \label{cond:G:|grad.G(x)+q/|x|^beta_1|<1/|x|^beta_2}
\Big|\grad G(q) +a_4\frac{q}{|q|^{\beta_1+1}}+a_5\frac{q}{|q|^{\beta_2+1}}\Big| \le a_6, \quad q\in\rbb^d\setminus\{0\}.
\end{equation}

\begin{remark} \label{rem:G1} 
(i) We note that condition $\beta_1\ge 1$ is to ensure that $G$ is a singular function and that collisions do not occur in finite time. Furthermore, \eqref{cond:G:|grad.G(x)+q/|x|^beta_1|<1/|x|^beta_2} essentially states that $\grad G$ can be expressed as
\begin{align*}
\grad G(q)= -c\frac{q}{|q|^{\beta_1+1}}+\text{lower order terms},
\end{align*}
hence the requirement $\beta_2<\beta_1$. 

(ii) As mentioned elsewhere in \cite{duong2024asymptotic}, a routine calculation shows that both the repulsive Coulomb functions
\begin{align*} 
G(q)=\begin{cases}
-\log|q|,& d=2,\\
\frac{1}{|q|^{d-2}},& d\ge 3,
\end{cases}
\end{align*}
 and the Lennard-Jones functions 
\begin{align*}
G(q) = \frac{c_0}{|q|^{12}}-\frac{c_1}{|x|^6},
\end{align*} 
satisfy \eqref{cond:G:|grad.G(x)+q/|x|^beta_1|<1/|x|^beta_2}. In particular, a well-known example for the case $\beta_1=1$ is the log function $G(q)=-\log|q|$ whereas the case $\beta_1>1$ includes the instance $G(q)=|q|^{-\beta_1+1}$.

 Also, we note that \eqref{cond:G:|grad.G(x)+q/|x|^beta_1|<1/|x|^beta_2} implies
\begin{align} \label{cond:G:|grad.G(x)+q/|x|^beta_1|<1/|x|^beta_2:duong2024}
\Big|\grad G(q) +a_4\frac{q}{|q|^{\beta_1+1}}\Big| \le \frac{|a_5|}{|q|^{\beta_2}}+a_6, \quad q\in\rbb^d\setminus\{0\},
\end{align}
which is the same as \cite[Condition (2.7)]{duong2024asymptotic}. In fact, while estimate \eqref{cond:G:|grad.G(x)+q/|x|^beta_1|<1/|x|^beta_2:duong2024} will be directly employed to establish the ergodicity results in Section \ref{sec:poly-mixing}, condition \eqref{cond:G:|grad.G(x)+q/|x|^beta_1|<1/|x|^beta_2} will be invoked to study the Newtonian limit in Section \ref{sec:newtonian-limit}.

(iii) Without loss of generality, we may assume that when $N\ge 2$
\begin{align} \label{cond:U+G>1:N>1}
\sum_{i=1}^N U(q_i)+\sum_{1\le i<j\le N}\close G(q_i-q_j)\ge 0,\quad \frac{1}{2}\sum_{i=1}^N U(q_i)+\sum_{1\le i<j\le N}\close G(q_i-q_j)\ge 0,
\end{align}
and when $N=1$
\begin{align} \label{cond:U+G>1:N=1}
U(q)+G(q)\ge 0.
\end{align}
Otherwise, we may replace $U(q)$ by $U(q)+C$ for some sufficiently large constant $C$, which does not affect \eqref{eqn:rLE:N-particle:epsilon}.

 (iv) Lastly, we remark that \cite[Condition (2.2)]{duong2024asymptotic} and \cite[Condition (2.6)]{duong2024asymptotic} respectively provide an upper bound on $\grad^2 U$ and $\grad^2 G$, which are needed for the study of the small mass limit in \cite{duong2024asymptotic}. Since we do not explore such a limit in the present article, we do not impose requirements on the second derivatives of the nonlinearities. 

\end{remark}

Although Assumptions \nameref{cond:U} and \nameref{cond:G1} are sufficient to establish the majority of the main results in this paper, we will have to impose further restrictions on the behavior of $G$ near the origin when dealing with ergodicity in the setting of multiple particles. More precisely, we make the following condition.

\subsection*{(G2)} \label{cond:G2} Let $\beta_1$ and $\beta_2$ be the constants as in Assumption \nameref{cond:G1}. We assume that
\begin{align} \label{cond:G2:beta_1in(1,2]}
\beta_1\in(1,2],\quad \text{and}\quad \beta_2\in[0,\beta_1-1),
\end{align}
and that for all $q\in \rbb^d\setminus\{0\}$,
\begin{align}
    |\G(q)|&\le a_1\Big(1+\frac{1}{|q|^{\beta_1-1}}\Big) , \label{cond:G2:G<1/|x|^(beta-1)} 
\end{align}
where $a_1$ is the constant from Assumption \ref{cond:U}.

\begin{remark} \label{rem:G2} As it turns out, the ergodicity of \eqref{eqn:rLE:N-particle:epsilon} for the case $N\ge 2$ is more delicate due to the singular interaction between the particles together with the weak dissipation from the relativistic force. To circumvent the issue, we restrict to those functions $G$ satisfying Assumption \nameref{cond:G2}, which requires that $G$ be of Riesz type and grow at most like $1/|q|$ near the origin. In particular, as a consequence of \eqref{cond:G:|grad.G(x)+q/|x|^beta_1|<1/|x|^beta_2}, condition \eqref{cond:G2:beta_1in(1,2]} excludes the instances of log functions as well as the Lennard-Jones function.
\end{remark}

Having introduced sufficient conditions on the potentials,  we turn to the issue of well-posedness for \eqref{eqn:rLE:N-particle:epsilon}. The domain where the process $\qb^\varepsilon(t)$ evolves on is denoted by $\D$ and is defined as  \cite{bolley2018dynamics,herzog2017ergodicity,
lu2019geometric}
\begin{align} \label{form:D}
\D= \begin{cases}
    \{\qb=(q_1,\dots,q_N) \in (\rbb^d)^N: q_i\neq q_j \text{ if } i\neq j\},& d\ge 2,\\
    \{\qb=(q_1,\dots,q_N) \in \rbb^N: q_1<q_2<\dots<q_N\},& d=1.
\end{cases}
\end{align}
We note that in dimension one, the set $\{ (q_1,\dots,q_N) \in (\rbb)^N:q_i\neq q_j \text{ if } i\neq j\}\}$ is not path connected. This requires the system be restricted to one of the connected components, hence the choice of $\D$ when $d=1$ for notational convenience. Then, we define the phase space for the solution of \eqref{eqn:rLE:N-particle:epsilon} as follow:
\begin{align} \label{form:X}
\X=\D\times (\rbb^d)^N .
\end{align}

The first result of this paper is the following well-posedness result ensuring the existence and uniqueness of strong solutions to system \eqref{eqn:rLE:N-particle:epsilon}.

\begin{proposition} \label{prop:rLE:N-particle:well-posed}  Under Assumption \nameref{cond:U} and Assumption \nameref{cond:G1}, for every initial condition $X_0=(\qb_0,\pb_0)\in\X$, system \eqref{eqn:rLE:N-particle:epsilon} admits a unique strong solution $X^\varepsilon(t;X_0)=\big(\qb^\varepsilon(t),\pb^\varepsilon(t)\big)\in\X$.
\end{proposition}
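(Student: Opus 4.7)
My approach is to combine local SDE theory on the open phase space $\X$ (where all coefficients are smooth) with a single Hamiltonian-type Lyapunov estimate to rule out both blow-up at infinity and particle collisions simultaneously. On $\X$ the relativistic drift $p_i/\sqrt{1+\ve|p_i|^2}$ is $C^\infty$ and globally bounded by $1/\sqrt{\ve}$, while $\grad \U$ and $\grad \G(q_i-q_j)$ are $C^\infty$ on $\X$ by Assumptions \nameref{cond:U} and \nameref{cond:G1}. Standard results therefore give a unique strong $\X$-valued solution up to an explosion time $\tau_\infty := \lim_{n\to\infty}\tau_n$, where
\[
\tau_n := \inf\Big\{t \geq 0 \suchthat |(\qb^\ve(t), \pb^\ve(t))| \geq n \ \text{ or } \ \min_{i \neq j}|q_i^\ve(t) - q_j^\ve(t)| \leq 1/n\Big\},
\]
so it suffices to prove $\tau_\infty = \infty$ almost surely.

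To that end I would use the natural total energy
\[
H(\qb,\pb) = \sum_{i=1}^N \big[K(p_i) + \U(q_i)\big] + \close\sum_{1\le i<j\le N}\close \G(q_i-q_j),
\]
with $K(p)=\ve^{-1}\sqrt{1+\ve|p|^2}$, so that $\grad K(p)=p/\sqrt{1+\ve|p|^2}$. Since $\grad_{p_i}H=\grad K(p_i)$ and $\grad_{q_i}H=\grad \U(q_i)+\sum_{j\neq i}\grad \G(q_i-q_j)$ (using that $\grad \G$ is odd), the conservative drifts cancel exactly in the infinitesimal generator of \eqref{eqn:rLE:N-particle:epsilon} applied to $H$, leaving
\[
\L H = -\sum_{i=1}^N |\grad K(p_i)|^2 + \sum_{i=1}^N \Delta K(p_i).
\]
A short direct computation gives $|\grad K(p)|^2=|p|^2/(1+\ve|p|^2)\le 1/\ve$ and $\Delta K(p)=[d+(d-1)\ve|p|^2]/(1+\ve|p|^2)^{3/2}$, both uniformly bounded in $p$; hence $\L H \le C_\ve$ on $\X$ for a constant $C_\ve$ depending only on $\ve,d,N$.

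Applying Itô's formula to $H(X^\ve(t\wedge\tau_n))$ and taking expectations (the stochastic integrals are true martingales because $\grad_{p_i}H=\grad K(p_i)$ is bounded) then yields
\[
\E H(X^\ve(t\wedge\tau_n)) \le H(X_0) + C_\ve t \quad\text{for all } n\ge 1,\, t\ge 0.
\]
By the coercivity \eqref{cond:U:U(x)=O(x^lambda+1)}, the inequality $K(p)\ge \ve^{-1}$, and the blow-up $\G(q)\to\infty$ as $|q|\to 0$ from Assumption \nameref{cond:G1}(i), one has $H(\qb,\pb)\to \infty$ whenever either $|(\qb,\pb)|\to\infty$ in $\X$ or $\min_{i\neq j}|q_i-q_j|\to 0$. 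Setting $m_n:=\inf\{H(X) : X\in\X,\ |X|\ge n \text{ or } \min_{i\neq j}|q_i-q_j|\le 1/n\}$, one gets $m_n\to\infty$, and a Markov-type bound
\[
\P(\tau_n \le T) \le \frac{\E H(X^\ve(T\wedge\tau_n))}{m_n} \le \frac{H(X_0) + C_\ve T}{m_n} \xrightarrow{n\to\infty} 0
\]
forces $\tau_\infty=\infty$ almost surely, completing the argument.

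The main obstacle is conceptually mild: it is really a matter of identifying a single Lyapunov function that controls both escape to infinity and collisions at once. The relativistic structure actually \emph{helps} here compared with \eqref{eqn:LE:original}, because $\grad K$ is globally bounded, so $|\grad K|^2$ and $\Delta K$ are bounded and $\L H$ is dominated by a constant; this sidesteps any need for the more delicate, $\ve$-uniform Lyapunov functions built in Section~\ref{sec:poly-mixing} for the polynomial mixing result.
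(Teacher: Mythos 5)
Your argument is correct and is exactly the route the paper indicates (and omits as standard): local well-posedness on the open set $\X$ plus non-explosion via the Hamiltonian, your $H$ being $\varepsilon^{-1}H_N$ with $H_N$ as in \eqref{form:H_N}, and your identity $\L H=-\sum_i|\grad K(p_i)|^2+\sum_i\Delta K(p_i)$ matching \eqref{eqn:L_N.H_N}. The only point worth making explicit is that $m_n\to\infty$ at near-collisions uses not just $G(q)\to\infty$ as $|q|\to 0$ but also that the remaining $G$-terms are bounded below by $-a_1(1+|q_i-q_j|+\dots)$ and hence dominated by the $U$-terms (cf.\ \eqref{cond:U+G>1} and Lemma \ref{lem:U+G>|q|-log(q)}), which is routine.
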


The well-posedness of \eqref{eqn:rLE:N-particle:epsilon} is guaranteed by using the Hamiltonian structures in \eqref{form:H} and \eqref{form:H_N} ensuring the total energy is controlled over time. Alternatively, Proposition \ref{prop:rLE:N-particle:well-posed} can be established using the Lyapunov functions constructed in Lemma \ref{lem:Lyapunov:single-particle} and Lemma \ref{lem:Lyapunov:N-particle} below in Section \ref{sec:poly-mixing}. The explicit argument is relatively standard following \cite{veretennikov2024lyapunov} and can be found in Appendix \ref{sec:well-posed} where we supply the proof of Proposition \ref{prop:rLE:N-particle:well-posed}. See also \cite{glatt2020generalized,mattingly2002ergodicity,khasminskii2011stochastic} for a further detailed discussion on the well-posedness of SDEs.

As a consequence of the well-posedness, we can thus introduce the Markov transition probabilities of the solution $X^\varepsilon(t)$ by
\begin{align*}
P_t^\varepsilon(X_0,A):=\P(X^\varepsilon(t;X_0)\in A),
\end{align*}
which are well-defined for $t\ge 0$, initial condition $X_0\in\X$ and Borel sets $A\subset\X$. Letting $\B_b(\X)$ denote the set of bounded Borel measurable functions $f:\X \rightarrow \rbb$, the associated Markov semigroup $P_t^\varepsilon:\B_b(\X)\to\B_b(\X)$ is defined and denoted by
\begin{align*}
P_t^\varepsilon f(X_0)=\E[f(X^\varepsilon(t;X_0))], \,\, f\in \B_b(\X).
\end{align*}

\subsection{Polynomial mixing} \label{sec:result:ergodicity}
We now turn to the first main topic of the paper concerning the large-time asymptotics of equation~\eqref{eqn:rLE:N-particle:epsilon}. Letting $\Pcal r(\X)$ denote the space of probability measures on Borel subsets of $\X$, recall that $\mu\in \Pcal r (\X)$ is called {\bf invariant} for the semigroup $P_t^\varepsilon$ if for every $f\in \B_b(\X)$
\begin{align*}
\int_{\X} f(X) (P_t^\varepsilon)^*\mu(\d X)=\int_{\X} f(X)\mu(\d X),
\end{align*}
where $(P_t^\varepsilon)^*\mu$ denotes the measure obtained by the action of $P_t^\varepsilon$ on $\mu$, i.e., \cite{hairer2011yet}
\begin{align*}
(P_t^\varepsilon)^*\mu(A) = \int_{\X} P_t^\varepsilon(X,A)\mu (\d X).
\end{align*}
Next, let $\L_N$ denote the generator associated with \eqref{eqn:rLE:N-particle:epsilon}. One defines $\L_N$ for any $\f\in C^2(\X;\rbb)$ by
\begin{align} \label{form:L_N}
\L_N\varphi &= \sum_{i=1}^N \frac{p_i}{\sqrt{1+\varepsilon|p_i|^2}}\cdot\grad_{q_i}\varphi +\sum_{i=1}^N \Big( - \frac{p_i}{\sqrt{1+\varepsilon|p_i|^2}} -\grad U(q_i)\Big)\cdot\grad_{p_i}\varphi  \notag \\
&\qquad -\sum_{i\neq j}\grad G(q_i-q_j)\cdot\big[ \grad_{p_i}\varphi -\grad_{p_j}\varphi \big] +\sum_{i=1}^N \triangle_{p
_i}\varphi .
\end{align}
As mentioned in the introduction, it can be shown that the invariant probability measure for $P_t^\varepsilon$ is given by the following relativistic Maxwellian distribution 
\begin{align} \label{form:pi_N^epsilon}
\pi_N^\varepsilon(\d\qb,\d\pb)=\frac{1}{Z_N^\varepsilon}\exp\Big\{-\Big(\sum_{i=1}^N \frac{1}{\varepsilon}\sqrt{1+\varepsilon|p_i|^2} +\sum_{i=1}^N U(q_i)+\close\sum_{1\le i<j\le N}\close G(q_i-q_j)\Big) \Big\}\d \qb\d \pb,
\end{align}
where $Z_N^\varepsilon$ is the normalized constant. Indeed, it suffices to establish that $\pi_N^\varepsilon(X)$ is a solution of the stationary Fokker-Planck equation
\begin{align} \label{eqn:L^*.pi_N=0}
(\L_N)^*\pi_N^\varepsilon(X)=0,
\end{align}
where $(\L_N)^*$ denotes the dual of $\L_N$, i.e.,
\begin{align*}
\int_{\X} \L_Nf_1(X)\cdot f_2(X) \d X = \int_{\X} f_1(X)\cdot (\L_N)^*f_2(X)\d X,
\end{align*}
for any $f_1,f_2\in C^2_c(\X;\rbb)$. In the literature, such an explicit calculation was carried out in the settings of the Langevin dynamics and related systems \cite{glatt2020generalized,mattingly2002ergodicity,pavliotis2014stochastic}. The main distinction of those equations from \eqref{eqn:rLE:N-particle:epsilon} is the appearance of the viscous drag in lieu of the relativistic forcing. Nevertheless, the proof of identity \eqref{eqn:L^*.pi_N=0} can be simply adapted to that of \cite[Proposition 8.2]{pavliotis2014stochastic} tailored to our setting with the presence of the singularities and the relativistic term.

Concerning the unique ergodicity of $\pi_N^\varepsilon$, following the framework of \cite{hairer2009hot,hairer2011yet,mattingly2002ergodicity}, we will measure the mixing rate with respect to the total variation distance, denoted by $\W_{\TV}$. Recall that $\W_{\TV}$ in $\Pcal r (\X)$ is defined as
\begin{align*}
\W_{\TV}(\mu_1,\mu_2) = \inf \E \big[\mathbf{1}\{X_1\neq X_2\}\big],
\end{align*}
where the infimum is taken over all bivariate random variables $(X_1,X_2)$ such that $X_1\sim \mu_1$ and $X_2\sim\mu_2$.

We now state the first main result of the paper, establishing the unique ergodicity of $\pi_N^\varepsilon$ defined in \eqref{form:pi_N^epsilon} as well as the polynomial convergent rate toward $\pi_N^\varepsilon$.

\begin{theorem} \label{thm:polynomial-mixing:N-particle}
For all $N\ge 1$, suppose that Assumption \textup{\nameref{cond:U}} and Assumption \textup{\nameref{cond:G1}} hold. Suppose further that when $N\ge 2$, Assumption \textup{\nameref{cond:G2}} holds. Then, for every $\varepsilon\in(0,1)$ sufficiently small and for all $r\ge1$, there exists a function $V\in C^2(\X;[1,\infty))$ such that the following holds 
\begin{align} \label{ineq:polynomial-ergodicity}
\W_{\TV}\big(P_t^\varepsilon(X_0,\cdot), \pi^\varepsilon_N\big)\le \frac{C}{(1+t)^r}V(X_0),\quad t\ge 0,\, X_0\in\X,
\end{align}
for some positive constant $C$ independent of $t$ and $X_0$.
\end{theorem}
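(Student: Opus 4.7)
The plan is to invoke the sub-geometric Harris-type theorem of \cite{hairer2009hot}, which reduces polynomial TV mixing of any order to three standard inputs: (i) Hörmander's hypoellipticity, yielding smoothness of the transition densities; (ii) solvability of the associated control problem, yielding irreducibility; and (iii) a Lyapunov function $V\in C^2(\X;[1,\infty))$ satisfying a sub-geometric drift inequality $\L_N V \leq -c_1 V^\alpha + c_2$ for some $\alpha \in (0,1)$. Combined, (i) and (ii) deliver uniqueness of $\pi_N^\varepsilon$ (already known to be invariant via \eqref{eqn:L^*.pi_N=0}) and the minorization of $P_t^\varepsilon$ on compact subsets of $\X$. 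Passing through suitable concave transformations of a single $V$, as in \cite{hairer2009hot}, converts the drift inequality into the whole family of bounds \eqref{ineq:polynomial-ergodicity} for all $r \geq 1$.

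Ingredients (i) and (ii), though technical, follow established patterns and I would relegate each to a dedicated lemma. For Hörmander's condition, the noise fields $\partial_{p_i}$ alone do not span $T\X$, but a single Lie bracket with the drift produces the $\partial_{q_i}$ directions (weighted by the nonvanishing factor $(1+\varepsilon|p_i|^2)^{-1/2}$), so the parabolic Hörmander rank condition holds at every point of $\X$. For the control problem, I would adapt the constructive argument of \cite{herzog2017ergodicity} by prescribing a smooth path in $\X$ between given endpoints that avoids the singular set $\{q_i = q_j\}$ and then reading the control $u(t)$ off the momentum equation; the only substantive modification from the Langevin case is the inversion of the smooth diffeomorphism $p \mapsto p/\sqrt{1+\varepsilon|p|^2}$ of $\rbb^d$ onto the open $\varepsilon^{-1/2}$-ball, which is harmless.

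The main obstacle is ingredient (iii). The natural starting point is the total energy $H_\varepsilon = \sum_i \varepsilon^{-1}\sqrt{1+\varepsilon|p_i|^2} + \sum_i U(q_i) + \sum_{i<j} G(q_i - q_j)$, but $\L_N H_\varepsilon$ yields only the dissipation $-\sum_i |p_i|^2/(1+\varepsilon|p_i|^2)$, which is uniformly bounded in $p_i$, so powers of $H_\varepsilon$ alone cannot close a useful inequality. Following the strategy of \cite{cooke2017geometric, herzog2017ergodicity, lu2019geometric, duong2024asymptotic}, I would augment $H_\varepsilon$ by a small multiple of the cross-term $\sum_i \langle q_i, p_i \rangle$ and then raise the combination to a fractional power, in order to activate the $q$-dissipation produced by condition \eqref{cond:U:x.U'(x)>-x^(lambda+1)} (giving $-q_i\cdot \grad U(q_i) \leq -a_2|q_i|^{\lambda+1} + a_3$) together with the repulsive contribution $(q_i - q_j)\cdot \grad G(q_i - q_j)$. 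For $N = 1$ this essentially reproduces the construction in \cite{duong2024asymptotic, lu2019geometric}.

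The genuinely new difficulty emerges for $N \geq 2$: applying $\L_N$ to the cross-term generates mixed contributions of the form $\langle p_i/\sqrt{1+\varepsilon|p_i|^2}, \grad G(q_i - q_j)\rangle$ which are singular along the diagonal and, unlike in the Langevin case, do not cancel against the corresponding $H_\varepsilon$ terms because of the relativistic factor. To dominate them I plan to invoke the pointwise decomposition \eqref{cond:G:|grad.G(x)+q/|x|^beta_1|<1/|x|^beta_2:duong2024} of $\grad G$ together with the auxiliary singular-potential bounds in the Appendix. The restrictions $\beta_1 \in (1,2]$ and $\beta_2 \in [0,\beta_1-1)$ imposed by Assumption \nameref{cond:G2} are calibrated precisely so that these singular remainders can be absorbed into a power of $H_\varepsilon$, closing the drift inequality and yielding \eqref{ineq:polynomial-ergodicity} via the framework of \cite{hairer2009hot}.
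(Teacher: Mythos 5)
Your high-level architecture matches the paper exactly: both reduce \eqref{ineq:polynomial-ergodicity} to the three inputs of \cite[Theorem 3.5]{hairer2009hot} (H\"ormander's condition, solvability of the control problem, and a sub-geometric Lyapunov drift), and your sketches of the first two ingredients coincide with the paper's Lemmas \ref{lem:Hormander} and \ref{lem:control-problem}, including the observation that one bracket with the drift generates the $\partial_{q_i}$ directions and that the only new point in the control problem is inverting $p\mapsto p/\sqrt{1+\varepsilon|p|^2}$. The gap is in ingredient (iii), which is the actual content of the theorem. Your proposed Lyapunov candidate, ``$H_\varepsilon$ plus a small multiple of $\sum_i\la q_i,p_i\ra$, raised to a fractional power,'' does not work: the relativistic Hamiltonian grows only \emph{linearly} in $|p_i|$, so $H_\varepsilon+\delta\la \qb,\pb\ra$ is not bounded below (take $|q|$ large and $p$ aligned against $q$), and a fractional power only weakens the Hamiltonian further. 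The paper's resolution goes in the opposite direction: it takes an \emph{integer power larger than one} of the Hamiltonian, $H^2$ for $N=1$ (see \eqref{form:V_1}) and $H_N^3$ for $N\ge 2$ (see \eqref{form:V_N}), precisely so that the Hamiltonian part dominates the cross terms and so that $\L_N H_N^k$ produces dissipation of order $-H_N^{k-1}$ in the regime $\varepsilon\max_i|p_i|^2\ge 1$. The ``fractional power'' in the final drift inequality $\L V\le -cV^\alpha+C$ is an output of this construction ($\alpha=1-\tfrac1{2n}$ or $1-\tfrac1{3n}$), not a device one applies to the ansatz.

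A second, related omission concerns the singular part of $V$. The plain cross term $\la q,p\ra$ does not produce repulsion near collisions: by \eqref{cond:G:grad.G(x)<1/|x|^beta} the contribution $-\la q_i-q_j,\grad G(q_i-q_j)\ra$ is bounded above by $+a_1|q_i-q_j|^{-(\beta_1-1)}$, i.e.\ it is a \emph{bad} singular term that must itself be dominated. The negative singular dissipation comes only from an additional singular cross term: $\la q,p\ra/|q|$ for $N=1$, whose generator yields $-a_4/(2|q|^{\beta_1})$ via \eqref{cond:G:<grad.G(q),q>/|q|}. For $N\ge 2$ the naive analogue $\la q_i-q_j,p_i-p_j\ra/|q_i-q_j|$ generates the term $|p_i-p_j|/|q_i-q_j|$, which cannot be dominated because $\L_N$ applied to the rest of $V$ only produces dissipation of order $-|\pb|$; the paper's fix is to \emph{weaken the singularity} of this cross term to $|q_i-q_j|^{-(\beta_1-1)}$ (and scale it by $\varepsilon^2$), after which Cauchy--Schwarz trades $|p_i-p_j|\cdot|q_i-q_j|^{-(\beta_1-1)}$ against $|p_i-p_j|^2$ plus $|q_i-q_j|^{-(2\beta_1-2)}$, and Assumption \nameref{cond:G2} ($\beta_1\le 2$, $\beta_2<\beta_1-1$) together with Lemma \ref{lem:|q_i-q_i|} guarantees that $-|q_i-q_j|^{-(2\beta_1-2)}$ is exactly the dissipation produced by the $\grad G$ contribution. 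Your proposal correctly senses that \nameref{cond:G2} is calibrated to absorb singular remainders, but without the modified exponent $\beta_1-1$ in the cross term and the cubic power of $H_N$ the drift inequality does not close, so the central step of the proof is missing.
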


As mentioned in Section \ref{sec:intro:proof}, the proof of Theorem \ref{thm:polynomial-mixing:N-particle} relies on three main ingredients. The first of which is the construction of suitable Lyapunov functions, cf. Definition \ref{def:Lyapunov}, allowing for establishing moment bounds on the returning time to the center of the phase space. The second ingredient is the Hormander's condition, cf. Definition \ref{def:Hormander}, verified through H\"ormander's celebrated ``sums of squares", see Theorem 1.1 in \cite{hormander1967hypoelliptic}. In particular, this property ensures that transition probabilities $P_t^\varepsilon$ have smooth densities $p_t^\varepsilon(X,Y)$ with respect to Lebesgue measure. The last property needed to complete the proof of \eqref{ineq:polynomial-ergodicity} is the solvability of the associated control problem, cf. Definition \ref{def:control-problem}. In turn, this implies the so-called irreducibility property, guaranteeing the uniform possibility of returning provided the dynamics starts from a compact set. While the latter two properties are relatively not difficult to check, as they have been done in literature for the related Langevin systems, the former on the derivation of the Lyapunov function turns out to be the most challenging issue, owing to the lack of a strong dissipative mechanism from the $\pb$ process. All of this will be discussed in details in Section \ref{sec:poly-mixing} where we supply the proof of Theorem \ref{thm:polynomial-mixing:N-particle}.

\begin{remark} \label{rem:polynomial-mixing} 1. We note that the constant $C$ in Theorem \ref{thm:polynomial-mixing:N-particle} may depend on the parameter $\varepsilon$ and $N$, which heavily affect the construction of $V$ below in Section \ref{sec:poly-mixing}. In particular, $C$ may get arbitrarily large when either $\varepsilon\to 0$ or $N\to\infty$. The issue of uniform polynomial mixing with respect to either small $\varepsilon$ or large $N$ therefore remains an open problem.

2. In the literature of the classical Langevin dynamics and related systems, the more popular mixing results are usually exponential convergent rates \cite{arnold2024trend,calogero2012exponential,  duong2024asymptotic, herzog2017ergodicity, lu2019geometric, mattingly2002ergodicity}. This relies on finding a Lyapunov function $V$ satisfying the inequality
\begin{align*}
\L V\le -cV+C,
\end{align*}
where $\L$ is the infinitesimal generator associated with the dynamics. In our settings, particularly when $N\ge 2$, we have to navigate the difficulty arising from the relativistic forcing and the singular potentials to the extend that we are only able to construct a weaker Lyapunov function of the form
\begin{align} \label{ineq:LV<-V^alpha}
\L V\le -c V^\alpha+C,
\end{align}
for some positive power $\alpha\in(0,1)$. See Definition \ref{def:Lyapunov}. As a consequence, this would imply the polynomial mixing rate \eqref{ineq:polynomial-ergodicity}.
\end{remark}

\subsection{Newtonian limit} \label{sec:result:newtonian-limit}

With regard to the Newtonian regime as $c\to\infty$ in the original system \eqref{eqn:rLE:N-particle:original}, observe that this translates to the convergence of \eqref{eqn:rLE:N-particle:epsilon} toward the classical Langevin dynamics \eqref{eqn:LE:original} as $\varepsilon\to 0$. Before stating the main result of this subsection, we note that the well-posedness of \eqref{eqn:LE:original} in the presence of singularities is not an issue as this can be proven making use of its own Hamiltonian structure, see \cite{conrad2010construction,grothaus2015hypocoercivity, herzog2017ergodicity}. More interestingly, as mentioned in the introduction, thanks to the strong dissipation, \eqref{eqn:LE:original} is known to possess geometric ergodicity, i.e., the relaxation rate toward equilibrium is exponentially fast \cite{herzog2017ergodicity,lu2019geometric}.

We now state our second main result of the paper through Theorem \ref{thm:newtonian-limit}, establishing the validity of the approximation of \eqref{eqn:rLE:N-particle:epsilon} by \eqref{eqn:LE:original}.

\begin{theorem} \label{thm:newtonian-limit}
Suppose that Assumptions \nameref{cond:U} and \nameref{cond:G1} hold. For every initial condition $(\qb_0,\pb_0)\in \X$, let $X^\varepsilon(t)=\big(\qb^\varepsilon(t),\pb^\varepsilon(t)\big)$ and $X(t)=\big(\qb(t),\pb(t)\big)$, respectively, be the solutions of \eqref{eqn:rLE:N-particle:epsilon} and \eqref{eqn:LE:original} (with $m=\gamma=1$). Then, the followings hold:

1. (Multi-particle system) For all $N\ge 2$, $T>0$, and $\xi>0$,
\begin{align} \label{lim:newtonian-limit:N-particle:prob}
\P\Big( \sup_{t\in[0,T]}\big[ |\qb^\varepsilon(t)-\qb(t)|+|\pb^\varepsilon(t)-\pb(t)|\big]>\xi\Big) \rightarrow 0,\quad \varepsilon\rightarrow 0.
\end{align}

2. (Single-particle system) When $N=1$, for all $T>0$ and $n\ge 1$,
\begin{align} \label{lim:newtonian-limit:single-particle:L^p}
\E \sup_{t\in[0,T]}\big[ |q^\varepsilon(t)-q(t)|^n+|p^\varepsilon(t)-p(t)|^n\big]\rightarrow 0,\quad \varepsilon\rightarrow 0.
\end{align}
\end{theorem}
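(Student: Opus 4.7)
The plan is to follow the truncation-plus-localization strategy flagged by the authors in Section \ref{sec:intro:proof}. For each large $R$, fix smooth cutoffs $U^R$ and $G^R$ that agree with $U$ and $G$ on $\{\qb\in\X:|\qb|+|\pb|\le R,\ \min_{i\neq j}|q_i-q_j|\ge 1/R\}$ but are globally Lipschitz on $(\rbb^d)^N$. Let $X^{\varepsilon,R}=(\qb^{\varepsilon,R},\pb^{\varepsilon,R})$ and $X^R=(\qb^R,\pb^R)$ denote the solutions of the truncated versions of \eqref{eqn:rLE:N-particle:epsilon} and \eqref{eqn:LE:original} driven by the same Brownian motions from the same initial data $(\qb_0,\pb_0)$. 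The triangle inequality then reduces the problem to showing that (a) $X^{\varepsilon,R}\to X^R$ as $\varepsilon\to 0$ for each fixed $R$, and (b) the exit times $\tau_R^\varepsilon$, $\tau_R$ of $X^\varepsilon$, $X$ from the compatibility set above satisfy $\P(\tau_R^\varepsilon\wedge\tau_R\le T)\to 0$ as $R\to\infty$, uniformly in small $\varepsilon$.

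For part (a), the difference $X^{\varepsilon,R}-X^R$ solves a deterministic ODE driven by the mismatch between $p_i^\varepsilon/\sqrt{1+\varepsilon|p_i^\varepsilon|^2}$ and $p_i$. I would decompose
\[
\frac{p_i^\varepsilon}{\sqrt{1+\varepsilon|p_i^\varepsilon|^2}}-p_i=\frac{p_i^\varepsilon-p_i}{\sqrt{1+\varepsilon|p_i^\varepsilon|^2}}+p_i\Bigl(\tfrac{1}{\sqrt{1+\varepsilon|p_i^\varepsilon|^2}}-1\Bigr),
\]
observe that the first summand is controlled by the difference itself, and that the second tends to $0$ pointwise and is dominated by $|p_i|$. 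Because the truncated coefficients are Lipschitz, standard moment bounds on $X^R$ combined with Grönwall and dominated convergence yield $\E\sup_{t\le T}|X^{\varepsilon,R}(t)-X^R(t)|^n\to 0$ for every $n\ge 1$ and every $R$.

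For part (b), the bound $\P(\tau_R\le T)\to 0$ as $R\to\infty$ is standard for \eqref{eqn:LE:original} via its Hamiltonian $H=\tfrac12|\pb|^2+\sum_iU(q_i)+\sum_{i<j}G(q_i-q_j)$. The delicate point is the analogous tightness for $X^\varepsilon$ uniformly in $\varepsilon$. Following the authors' hint, I would \emph{not} apply It\^o to the relativistic energy (whose initial value is of order $1/\varepsilon$), but to the classical Hamiltonian $H$ evaluated along $X^\varepsilon$. The conservative forces no longer cancel exactly, leaving leftover terms of the form $\nabla U(q_i^\varepsilon)\cdot p_i^\varepsilon\bigl(1-\tfrac{1}{\sqrt{1+\varepsilon|p_i^\varepsilon|^2}}\bigr)$ and the analogous $G$-term, plus a harmless $O(1)$ It\^o correction. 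These leftovers are absolutely bounded by $|p_i^\varepsilon||\nabla U|$ and $|p_i^\varepsilon||\nabla G|$; I would estimate them with Cauchy--Schwarz, Assumption~\nameref{cond:G1}(ii) and the auxiliary singular-potential bounds announced for the appendix, aiming for a Grönwall-type inequality on $\E\sup_{t\le T}H(\qb^\varepsilon(t),\pb^\varepsilon(t))$ that is uniform in small $\varepsilon$. For $N=1$, the cross term involving $G$ is absent, so the argument cleanly yields the full uniform moment estimate \eqref{ineq:moment-bound:q_c,p_c}; this lets one upgrade convergence in probability to $L^n$ convergence through a uniform-integrability argument, giving \eqref{lim:newtonian-limit:single-particle:L^p}.

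The main obstacle is exactly the step just described in the multi-particle case. Since the relativistic friction $-p_i/\sqrt{1+\varepsilon|p_i|^2}$ saturates as $|p_i|$ grows, it does not produce coercivity strong enough to absorb the residual cross terms generated when $H$ is used on $X^\varepsilon$; worse still, near-collisions allow $G$ to pump energy across particles in a way that depends on $\varepsilon$. This is why in the multi-particle regime one can only extract a tightness-type control on $X^\varepsilon$ (enough to make $\P(\tau_R^\varepsilon\le T)\to 0$ as $R\to\infty$), and thus only convergence in probability~\eqref{lim:newtonian-limit:N-particle:prob}, while the single-particle case genuinely delivers $L^n$ convergence.
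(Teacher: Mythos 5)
Your overall architecture (truncate to Lipschitz systems, prove $L^n$ convergence there, then localize with exit times) matches the paper's Proposition \ref{prop:newtonian-limit:Lipschitz} and the stopping-time decomposition in Sections \ref{sec:newtonian-limit:proof-of-thm:part-1}--\ref{sec:newtonian-limit:proof-of-thm:part-2}, and step (a) is essentially correct. The gap is in step (b). Your reduction needs $\P(\tau_R^\varepsilon\le T)\to 0$ as $R\to\infty$ \emph{uniformly in small} $\varepsilon$, but you never actually obtain it: applying It\^o to the classical Hamiltonian $H$ along $X^\varepsilon$ leaves residuals $\la \grad G(q_i^\varepsilon-q_j^\varepsilon),p_i^\varepsilon\ra\big(1-(1+\varepsilon|p_i^\varepsilon|^2)^{-1/2}\big)$, which near a collision are of size $|q_i-q_j|^{-\beta_1}|p_i|$ and are \emph{not} dominated by $H\sim |q_i-q_j|^{-\beta_1+1}+|p_i|^2$ (when $\varepsilon|p_i|^2\gtrsim 1$ the prefactor is $O(1)$), so no Gr\"onwall inequality closes. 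You acknowledge the obstruction but then assert that a ``tightness-type control on $X^\varepsilon$'' survives anyway; that assertion is precisely the missing ingredient, and the paper does not prove any such uniform-in-$\varepsilon$ bound for $N\ge 2$. Instead it sidesteps the issue: the dangerous event $\{\sigma^R_\varepsilon<T\le\sigma^R\}$ is intersected with $\{\sup_{[0,T]}|X^{\varepsilon,R}-X^R|\le\xi/R\}$ (whose complement is $O(\varepsilon)$ by the Lipschitz step), and on that intersection the \emph{Langevin} trajectory is forced within $O(1/R)$ of the bad set; its probability is then controlled by the Langevin moment bound (Lemma \ref{lem:LE:N-particle:moment-estimate}) together with Lemma \ref{lem:U+G>|q|-log(q)}. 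No estimate on $X^\varepsilon$ beyond the truncated one is ever used for $N\ge 2$.

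The single-particle step has the same defect. Your claim that ``for $N=1$ the cross term involving $G$ is absent'' is false: $G(q)$ remains in \eqref{eqn:rLE:single-particle:epsilon} as a singular potential at the origin, and the classical Hamiltonian still produces the uncontrolled residual $\la\grad U(q^\varepsilon)+\grad G(q^\varepsilon),p^\varepsilon\ra\big(1-(1+\varepsilon|p^\varepsilon|^2)^{-1/2}\big)$, with $|\grad G|\sim|q|^{-\beta_1}$ not bounded by $G\sim|q|^{-\beta_1+1}$. The uniform-in-$\varepsilon$ moment bound \eqref{ineq:rLE:single-particle:moment-estimate:sup_[0,T]} that powers the $L^n$ upgrade is obtained in the paper only through the specific functional
\begin{align*}
\Gamma_2(q,p)=\tfrac{1}{2}\varepsilon\big(U(q)+G(q)\big)^2+\big(U(q)+G(q)\big)\sqrt{1+\varepsilon|p|^2}+\tfrac{1}{2}|p|^2,
\end{align*}
chosen so that \emph{all} potential-force cross terms cancel exactly in It\^o's formula, leaving only terms of order $\varepsilon(U+G)$ plus a martingale handled by the exponential martingale inequality; this cancellation is unavailable for $N\ge 2$ because $G(q_i-q_j)$ would have to pair simultaneously with $\sqrt{1+\varepsilon|p_i|^2}$ and $\sqrt{1+\varepsilon|p_j|^2}$. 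Without identifying $\Gamma_2$ (or an equivalent device), neither your uniform moment bound for $N=1$ nor your localization for $N\ge 2$ goes through.
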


Owing to the presence of the singular potentials, we follow closely the framework of \cite[Section 4]{duong2024asymptotic} dealing with the same issue for the so-called small mass limit. Similar to the mixing results of Theorem \ref{thm:polynomial-mixing:N-particle}, the complication caused by the relativistic forcing jointly with the nonlinearities induces further difficulty when considering the case $N\ge2$. As a consequence, we have to treat this situation with a slightly different technique compared to the one employed for the single-particle scenario ($N=1$). More specifically, on the one hand, when $N\ge 2$, the argument of \eqref{lim:newtonian-limit:N-particle:prob} relies on moment bounds on the Langevin system \eqref{eqn:LE:original}. Adapting to the idea from \cite{herzog2016small}, this results in a convergence in probability on any finite-time window. On the other hand, when $N=1$, we are able to obtain uniform energy bounds on \eqref{eqn:rLE:N-particle:epsilon} that are independent of $\varepsilon$. In turn, they allow for upgrading the preceding probability argument so as to establish the convergence in $L^p$. We note that those estimates are relatively delicate making use of the fact that $N=1$ and otherwise remain open for the multi-particle case. All of this will be discussed in details in Section \ref{sec:newtonian-limit} where we provide the proof of Theorem \ref{thm:newtonian-limit}.

\section{Polynomial mixing of \eqref{eqn:rLE:N-particle:epsilon}} \label{sec:poly-mixing}

In this section, we provide the detailed proof of Theorem \ref{thm:polynomial-mixing:N-particle} giving the polynomial mixing rate of \eqref{eqn:rLE:N-particle:epsilon} toward the unique invariant probability measure $\pi_N^\varepsilon$ defined in \eqref{form:pi_N^epsilon}. As mentioned in Section \ref{sec:result:ergodicity}, the main ingredient is the construction of suitable Lyapunov functions, which will be presented in Sections \ref{sec:poly-mixing:Lyapunov:single-particle} and \ref{sec:poly-mixing:Lyapunov:N-particle} below. Then, together with verifying the Hormander's condition and solving the associated control problem, they allow us to extract the convergent rate \eqref{ineq:polynomial-ergodicity}, thereby concluding Theorem \ref{thm:polynomial-mixing:N-particle} in Section \ref{sec:poly-mixing:proof-of-thm}. For the reader's convenience, we first recall the definitions of these notions below. 

\begin{definition} \label{def:Lyapunov}
A function $V\in C^2(\X;[1,\infty))$ is called a \emph{Lyapunov} function for \eqref{eqn:rLE:N-particle:epsilon} if the followings hold:

(i) $V(\qb,\pb)\to \infty$ whenever $|(\qb,\pb)|+\sum_{1\le i<j\le N}|q_i-q_j|^{-1}\to \infty$ in $\X$; and

(ii) for all $(\qb,\pb)\in\X$, 
\begin{align} \label{ineq:Lyapunov}
\mathcal{L}_N V(\qb,\pb)\le -c  V(\qb,\pb)^\alpha+D, 
\end{align}
for some constants $c>0$, $D\ge 0$ and $\alpha\in(0,1)$ independent of $(\qb,\pb)$. In the above, $\L_N$ is the generator defined in \eqref{form:L_N}.

\end{definition}

To state the Hormander's condition, we note that $\L_N$ from \eqref{form:L_N} can be recast as
\begin{align*}
\L_N =  X_0 +\sum_{i=1}^N (X_i)^2,
\end{align*}
where $\{X_i\}_{i=0}^N$ denotes the family of vector fields given by
\begin{align} \label{form:X_i}
X_0& = \sum_{i=1}^N\frac{p_i}{\sqrt{1+\varepsilon|p_i|^2}}\partial_{q_i}+ \sum_{i=1}^N\Big( - \frac{p_i}{\sqrt{1+\varepsilon|p_i|^2}} -\grad U(q_i)  -\sum_{j\neq i}\grad G(q_i-q_j)\Big)\partial_{p_i}, \notag \\
X_i &= \partial_{p_i}.
\end{align}
Following \cite[Theorem 1.1]{hormander1967hypoelliptic} and \cite[Condition (159)]{bellet2006ergodic}, we have the following definition of Hormander's condition: 

\begin{definition} \label{def:Hormander}
The family of vector fields $\{X_i\}_{i=0}^N$ defined in \eqref{form:X_i} is said to satisfy the Hormander's condition if the Lie algebra generated by the family
\begin{align*}
\{X^i\}_{i=1}^N,\quad\{[X^i,X^j]\}_{i,j=0}^N,\quad \{[[X^i,X^j],X_k]\}_{i,j,k=0}^N,\quad\dots,
\end{align*}
has maximal rank at every point $X\in\X$.
\end{definition}

The last condition needed to establish Theorem \ref{thm:polynomial-mixing:N-particle} concerns the solvability of the control problem associated with \eqref{eqn:rLE:N-particle:epsilon}. Namely, consider the following system of ODEs:
\begin{align} \label{eqn:rLE:N-particle:epsilon:control-problem}
\d\, q_i(t) &= \frac{p_i(t) }{\sqrt{1+\varepsilon |p_i(t) |^2}}\d t,\qquad i=1,\dots,N, \notag\\
\d\, p_i(t) & = -\frac{p_i(t) }{\sqrt{1+\varepsilon |p_i(t) |^2}} \d t -\grad U(q_i(t))\d t- \sum_{j\neq i} \grad G\big(q_i(t)-q_j(t)\big) \d t +\sqrt{2} \,\d U_i(t),
\end{align}
where $U_i(\cdot)\in C^1(\rbb^d;\rbb)$ is a control path. We then have the following condition requiring that the element $X_1\in \X=\D\times
(\rbb^d)^N$ be reachable through \eqref{eqn:rLE:N-particle:epsilon:control-problem} from $X_0\in \X$.

\begin{definition} \label{def:control-problem}
Given any $X_0,X_1\in\X$, the control problem \eqref{eqn:rLE:N-particle:epsilon:control-problem} is said to solvable if there exist a time $T>0$ and paths $\{U_i\}_{i=1}^N$ such that the solution $Z(t)$ of \eqref{eqn:rLE:N-particle:epsilon:control-problem} satisfies
\begin{align*}
Z(0)= X_0,\quad \textup{and}\quad Z(T)=X_1.
\end{align*}
\end{definition}

As we will see later in Section \ref{sec:poly-mixing:proof-of-thm}, verifying the latter two properties are relatively not difficult as they follow standard arguments in literature \cite{hairer2009hot,herzog2017ergodicity,mattingly2002ergodicity,
ottobre2011asymptotic,pavliotis2014stochastic,bellet2006ergodic}. We now turn our attention to the Lyapunov property stated in Definition \ref{def:Lyapunov}, which is the main condition that the proof of Theorem \ref{thm:polynomial-mixing:N-particle} relies on. We start the procedure in Section \ref{sec:poly-mixing:Lyapunov:single-particle} and construct a Lyapunov function for the case of single-particle $(N=1)$ since it is simpler than that for the case of multi-particle $(N\ge 2)$.

\subsection{Lyapunov function for single-particle system} \label{sec:poly-mixing:Lyapunov:single-particle}

For the reader's convenience, when $N=1$, equation \eqref{eqn:rLE:N-particle:epsilon} is reduced to the following system:
\begin{align} \label{eqn:rLE:single-particle:epsilon}
\d\, q(t) &= \frac{p(t) }{\sqrt{1+\varepsilon |p(t) |^2}}\d t, \notag\\
\d\, p(t) & = -\frac{p(t) }{\sqrt{1+\varepsilon |p(t) |^2}} \d t -\grad U(q(t))\d t- \grad G\big(q(t)\big) \d t +\sqrt{2} \,\d W(t).
\end{align}
The generator $\L_1$ associated with \eqref{eqn:rLE:single-particle:epsilon} is given by
\begin{align} \label{form:L_1}
\L_1 = \frac{p}{\sqrt{1+\varepsilon |p|^2}}\cdot\grad_q +\Big[-\frac{p}{\sqrt{1+\varepsilon |p|^2}}-\grad U(q)-\grad G(q)\Big]\cdot\grad_p+ \triangle_p.
\end{align}
Before diving into the explicit construction of a Lyapunov function, we briefly discuss the heuristic argument to derive such a function. To this end, let us introduce the Hamiltonian $H$ defined as
\begin{align} \label{form:H}
H(q,p) = \varepsilon U(q)+\varepsilon G(q)+\sqrt{1+\varepsilon |p|^2}.
\end{align}
A routine calculation gives
\begin{align} \label{eqn:L_1.H}
\L_1 H = -\frac{\varepsilon|p|^2}{1+\varepsilon|p|^2}+\frac{d\varepsilon+(d-1)\varepsilon^2|p|^2}{(1+\varepsilon|p|^2)^{3/2}}.
\end{align}
Observe that the ansatz choice of $H$ does not satisfy the Lyapunov bound \eqref{ineq:LV<-V^alpha} so as to deduce a mixing rate. In order to achieve the desired Lyapunov effect, it is necessary to modify $H$ appropriately. To this end, on the one hand, when $|q|\to \infty$, we follow \cite{mattingly2002ergodicity} and employ the usual trick by combining $H$ with the cross term $\la q,p\ra$. On the other hand, when $|q|\to 0$, we adopt the framework of \cite{duong2024asymptotic, lu2019geometric} and perturb $H$ by the term $\la q,p\ra/|q|$. In other words, the next ansatz choice has the form
\begin{align} \label{form:V:Lu2019}
V= H+\varepsilon_1\la p,q\ra+\frac{\la q,p\ra}{|q|}.
\end{align}
However, one may see that this choice is not guaranteed to stay strictly positive, since the Hamiltonian comprises of $\sqrt{1+\varepsilon|p|^2}$ and does not dominate $\la q,p\ra$. Nor does  $\L_1 H$ itself produce any dissipation property as $|p|\to\infty$. While this is the main issue that we face in our settings, we note that it did not induce any challenge for the Langevin system \eqref{eqn:LE:original} since their Hamiltonian structure has the order of $|p|^2$ \cite{bolley2018dynamics, mattingly2002ergodicity,
herzog2017ergodicity,lu2019geometric}. In fact, the choice of \eqref{form:V:Lu2019} was successfully employed in the work of \cite{duong2024asymptotic,lu2019geometric} to establish an exponential convergent rate.

To circumvent the difficulty, we will have to increase the strength of the Hamiltonian, namely we will consider $H^2$ instead of $H$. As it turns out, not only is $H^2$ able to subsume the cross term $\la q,p\ra$ and $\la q,p\ra/|q|$, altogether, they also produce the desired Lyapunov effect. In summary, letting $\varepsilon_1,\kappa_1>0$ be given and be chosen later, a candidate function for the single-particle case is given below.
\begin{align} \label{form:V_1}
V_1(q,p) = H(q,p)^2+\varepsilon_1 \la q,p\ra - \frac{\la q,p\ra}{|q|}+\kappa_1.
\end{align}  
By tuning $\varepsilon_1$ small and $\kappa_1$ large enough, we will see that $V_1$ is a Lyapunov function for \eqref{eqn:rLE:single-particle:epsilon}. This is rigorously stated through Lemma \ref{lem:Lyapunov:single-particle} below.

\begin{lemma} \label{lem:Lyapunov:single-particle}
Suppose that Assumptions \textup{\nameref{cond:U}} and \textup{\nameref{cond:G1}} hold. For all $\varepsilon\in(0,1)$ sufficiently small, there exist positive constants $\varepsilon_1$ and $\kappa_1$ such that the function $V_1$ defined in \eqref{form:V_1} satisfies
\begin{align} \label{ineq:L_1.V_1^n}
\L_1\big[ V_1(q,p)^n\big] \le -c V_1(q,p)^{n-\frac{1}{2}}+C~ \text{for all}~ n\geq 1,
\end{align}
for some positive constants $c=c(n,\varepsilon)$and $C=C(n,\varepsilon)$ independent of $q$ and $p$. In the above, $\L_1$ is the generator given by \eqref{form:L_1}.
\end{lemma}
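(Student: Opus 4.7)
The plan is to first establish the case $n=1$, namely $\mathcal{L}_1 V_1 \le -c V_1^{1/2}+C$, and then bootstrap to arbitrary $n\ge 1$ via the chain rule
\[
\mathcal{L}_1 V_1^n = n V_1^{n-1}\mathcal{L}_1 V_1 + n(n-1)V_1^{n-2}|\nabla_p V_1|^2.
\]
Since $\nabla_p V_1 = 2\varepsilon H p/\sqrt{1+\varepsilon|p|^2} + \varepsilon_1 q + q/|q|$ and $|\nabla_p H|^2\le\varepsilon$, a direct estimate gives $|\nabla_p V_1|^2 \lesssim V_1$, so the second term above is of order $V_1^{n-1}$ and is dominated by the leading dissipation $-c n V_1^{n-1/2}$ once $V_1$ is large; the bounded region contributes only to the overall constant.

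For the $n=1$ bound I would compute $\mathcal{L}_1 V_1$ piece by piece. The identity $\mathcal{L}_1 H^2 = 2H\mathcal{L}_1 H + 2|\nabla_p H|^2$ together with \eqref{eqn:L_1.H} gives an explicit expression whose leading dissipative term is $-2\varepsilon|p|^2 H/(1+\varepsilon|p|^2)$. A short direct computation yields
\[
\mathcal{L}_1\langle q,p\rangle = \frac{|p|^2 - \langle q,p\rangle}{\sqrt{1+\varepsilon|p|^2}} - \langle\nabla U(q),q\rangle - \langle\nabla G(q),q\rangle,
\]
and an analogous identity for $\mathcal{L}_1(\langle q,p\rangle/|q|)$, featuring the non-negative angular piece $(|p|^2-\langle p,q/|q|\rangle^2)/(|q|\sqrt{1+\varepsilon|p|^2})$ and the singular contribution $-\langle\nabla G(q),q/|q|\rangle$. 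Invoking \eqref{cond:U:x.U'(x)>-x^(lambda+1)} to extract the coercive $-a_2|q|^{\lambda+1}$ from $-\langle\nabla U,q\rangle$ and \eqref{cond:G:|grad.G(x)+q/|x|^beta_1|<1/|x|^beta_2:duong2024} to pin down the leading asymptotic of $\nabla G$ near the singular set, I would then verify, in each of the asymptotic regimes where $V_1$ is large---$|p|\to\infty$, $|q|\to\infty$, and $|q|\to 0$---that a dissipative term of order $V_1^{1/2}$ dominates: for large $|p|$ it is $-2H$ coming from $\mathcal{L}_1 H^2$; for large $|q|$ it is $-\varepsilon_1 a_2 |q|^{\lambda+1}$ coming from $\varepsilon_1\mathcal{L}_1\langle q,p\rangle$; and for small $|q|$ it comes from the weighted cross term $\langle q,p\rangle/|q|$ interacting with $-\nabla G$. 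The parameter $\varepsilon_1$ is tuned small (comparable to a positive power of $\varepsilon$) so that the positive $|p|^2$-terms generated by $\varepsilon_1\mathcal{L}_1\langle q,p\rangle$ are absorbed by the $H^2$ dissipation, and $\kappa_1$ is tuned large so that $V_1\ge 1$ and $V_1^{1/2}$ dominates the linear cross pieces.

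The main obstacle is the regime $|q|\to 0$. Because the relativistic kinetic term $\sqrt{1+\varepsilon|p|^2}$ grows only linearly in $|p|$---rather than quadratically as in the classical Langevin case---the natural energy $H^2$ produces dissipation only of order $H\sim V_1^{1/2}$, not $V_1$; moreover, near the singular set the Hamiltonian contribution $\mathcal{L}_1 H^2$ is itself not dissipative because of the positive piece $2Hd\varepsilon/\sqrt{1+\varepsilon|p|^2}$. Delicately balancing these contributions against those from the weighted cross term $\langle q,p\rangle/|q|$, whose generator produces the singular factor $-\langle\nabla G(q),q/|q|\rangle$ encoding the precise asymptotic shape of the repulsive force, is what fixes the particular form of $V_1$ and forces the polynomial rate $V_1^{n-1/2}$ in place of the exponential $V_1^n$ available for the classical Langevin dynamics. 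This structural limitation is precisely what underlies the polynomial (rather than geometric) mixing rate asserted in Theorem \ref{thm:polynomial-mixing:N-particle}.
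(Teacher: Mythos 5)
Your proposal follows essentially the same route as the paper's proof: the same decomposition of $V_1$ into $H^2$, the linear cross term $\varepsilon_1\la q,p\ra$ and the weighted cross term, the same regime-by-regime matching of dissipation against $V_1^{1/2}$ (with $\varepsilon_1$ taken comparable to $\varepsilon$ and $\kappa_1$ large), and the same chain-rule bootstrap $\L_1(V_1^n)=nV_1^{n-1}\L_1V_1+n(n-1)V_1^{n-2}|\grad_p V_1|^2$ with $|\grad_p V_1|^2\lesssim V_1$. There is, however, one sign that must be pinned down for the small-$|q|$ regime to close, and as written your sketch has it the wrong way. If the weighted cross term enters $V_1$ with a plus sign, then $\L_1 V_1$ contains exactly the two pieces you list, namely the non-negative angular term $(|p|^2-\la p,q/|q|\ra^2)/(|q|\sqrt{1+\varepsilon|p|^2})$ and $-\la\grad G(q),q/|q|\ra\approx +a_4/|q|^{\beta_1}$, and \emph{both are positive}; neither is dominated by the available dissipation of order $H$ (take, e.g., $p\perp q$ with $|p|=|q|^{-\beta_1}$ and $|q|\to0$), so the estimate fails. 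The cross term must be subtracted: one works with $H^2+\varepsilon\la q,p\ra-\la q,p\ra/|q|$, which is what the paper's proof actually does despite the plus sign displayed in \eqref{form:V_1}. With the minus sign the angular piece becomes non-positive and can be discarded, and the $G$-contribution becomes $+\la\grad G(q),q\ra/|q|\le -a_4/(2|q|^{\beta_1})+C$ by \eqref{cond:G:<grad.G(q),q>/|q|}, which is precisely the singular dissipation needed near the origin. With that correction the rest of your argument goes through exactly as in the paper.
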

\begin{proof}
Recalling the function $H$ defined in \eqref{form:H}, from \eqref{eqn:L_1.H}, we have
\begin{align} \label{ineq:L_1.H^2}
\L_1 H^2 &= 2H\Big(-\frac{\varepsilon|p|^2}{1+\varepsilon|p|^2}+\frac{d\varepsilon+(d-1)\varepsilon^2|p|^2}{(1+\varepsilon|p|^2)^{3/2}}\Big)+2\frac{\varepsilon^2|p|^2}{1+\varepsilon|p|^2}   \notag \\
& \le 2H\Big(-\frac{\varepsilon|p|^2}{1+\varepsilon|p|^2}+\frac{d\varepsilon+(d-1)\varepsilon^2|p|^2}{(1+\varepsilon|p|^2)^{3/2}}\Big)+2\varepsilon  .
\end{align}

Next, with regard to the cross term $\varepsilon_1\la q,p\ra$ on the right-hand side of expression \eqref{form:V_1}, it holds that
\begin{align*}
\L_1 \la q,p\ra & = \frac{|p|^2}{\sqrt{1+\varepsilon|p|^2}}  - \frac{\la q,p\ra}{\sqrt{1+\varepsilon |p|^2}}-\la q,\grad U(q)\ra - \la q,\grad G(q)\ra.
\end{align*}
To estimate the potentials, we recall from condition \eqref{cond:U:x.U'(x)>-x^(lambda+1)} that
\begin{align*}
-\la q,\grad U(q)\ra \le -a_2|q|^{\lambda+1}+a_3,
\end{align*}
whereas condition \eqref{cond:G:grad.G(x)<1/|x|^beta} implies
\begin{align*}
\la q,\grad G(q)\ra \le a_1\Big( |q|+\frac{1}{|q|^{\beta_1-1}} \Big).
\end{align*}
It follows that
\begin{align} \label{ineq:L_1.<q,p>}
\L_1 \la q,p\ra & \le  \frac{|p|+|q|}{\sqrt{\varepsilon}}  - \frac{1}{2}a_2|q|^{\lambda+1}+\frac{a_1}{|q|^{\beta_1-1}}+C .
\end{align}
In the above, $C$ is a positive constant independent of $q,p$ and $\varepsilon$.

Turning to the singular term $\la q,p\ra/|q|$, a routine calculation gives
\begin{align}
\L_1\bigg(\!-\frac{\la q,p\ra}{|q|}\bigg) 
& = -\frac{|p|^2}{|q|\sqrt{1+\varepsilon|p|^2}}+\frac{|\la q,p\ra|^2}{|q|^3\sqrt{1+\varepsilon|p|^2}}+ \frac{\la q,p\ra}{|q|\sqrt{1+\varepsilon|p|^2}} \notag \\
&\qquad+ \frac{\la\grad U(q), q\ra}{|q|}+\frac{\la \grad \G(q),q\ra}{|q|}.\label{eqn:L_1<q,p>/|q|}
\end{align}
It is clear that
\begin{align*}
-\frac{|p|^2}{|q|\sqrt{1+\varepsilon|p|^2}}+\frac{|\la q,p\ra|^2}{|q|^3\sqrt{1+\varepsilon|p|^2}}\le 0,
\end{align*}
which is negligible. Also, using Cauchy-Schwarz inequality yields
\begin{align*}
\frac{\la q,p\ra}{|q|\sqrt{1+\varepsilon|p|^2}} \le \frac{1}{\sqrt{\varepsilon}}.
\end{align*}
With regard to the potentials, from condition~\eqref{cond:U:U'(x)=O(x^lambda)}, we readily have
\begin{align*}
 \frac{\la\grad U(q), q\ra}{|q|} \le |\grad \U(q)|\le a_1(1+|q|^\lambda).
\end{align*}
 Also, recalling \eqref{cond:G:|grad.G(x)+q/|x|^beta_1|<1/|x|^beta_2},  
\begin{align} \label{cond:G:<grad.G(q),q>/|q|}
\frac{\la \grad \G(q),q\ra}{|q|} = -\frac{a_4}{|q|^{\beta_1}}+ \frac{\la \grad \G(q)+a_4\frac{q}{|q|^{\beta_1+1}},q\ra}{|q|}& \le  -\frac{a_4}{|q|^{\beta_1}}+\frac{|a_5|}{|q|^{\beta_2}}+a_6\nt\\
&\le -\frac{a_4}{2|q|^{\beta_1}}+C.
\end{align}
In the last estimate above, we subsumed $|q|^{-\beta_2}$ into $-|q|^{-\beta_1}$ thanks to the fact that $\beta_2\in[0,\beta_1)$ by virtue of the condition \eqref{cond:G:|grad.G(x)+q/|x|^beta_1|<1/|x|^beta_2}. Altogether, we get
\begin{align} \label{ineq:L_1.<q,p>/|q|}
\L_1\bigg(\!-\frac{\la q,p\ra}{|q|}\bigg) 
&\le a_1 |q|^\lambda-\frac{a_4}{2|q|^{\beta_1}}+C.
\end{align}

Now, letting $\varepsilon_1>0$ be given and be chosen later, we collect \eqref{ineq:L_1.H^2}, \eqref{ineq:L_1.<q,p>} together with \eqref{ineq:L_1.<q,p>/|q|} to obtain the bound
\begin{align*}
& \L_1 \bigg( H^2+\varepsilon_1\la q,p\ra - \frac{\la q,p\ra}{|q|} \bigg)\\
& \le 2H\Big(-\frac{\varepsilon|p|^2}{1+\varepsilon|p|^2}+\frac{d\varepsilon+(d-1)\varepsilon^2|p|^2}{(1+\varepsilon|p|^2)^{3/2}}\Big) + \varepsilon_1\Big( \frac{|p|+|q|}{\sqrt{\varepsilon}}   - \frac{1}{2}a_2|q|^{\lambda+1}+\frac{a_1}{|q|^{\beta_1-1}}
\Big) \\
&\qquad+ a_1 |q|^\lambda-\frac{a_4}{2|q|^{\beta_1}}+C. 
\end{align*}
Since (recalling \eqref{cond:U+G>1:N=1})
\begin{align*}
H\ge \sqrt{1+\varepsilon|p|^2},\quad\text{and}\quad \frac{d\varepsilon+(d-1)\varepsilon^2|p|^2}{(1+\varepsilon|p|^2)^{3/2}}\le (2d-1)\varepsilon,
\end{align*}
we infer that
\begin{align*}
2H\Big(-\frac{\varepsilon|p|^2}{1+\varepsilon|p|^2}+\frac{d\varepsilon+(d-1)\varepsilon^2|p|^2}{(1+\varepsilon|p|^2)^{3/2}}\Big)& \le -\frac{2\varepsilon|p|^2}{\sqrt{1+\varepsilon|p|^2}}+2(2d-1)\varepsilon H\\
\le -2\sqrt{1+\varepsilon |p|^2}+2(2d-1)\varepsilon H +2.
\end{align*}
For simplicity, picking $\varepsilon_1=\varepsilon$ implies
\begin{align*}
& \varepsilon_1\Big( \frac{|p|+|q|}{\sqrt{\varepsilon}}   - \frac{1}{2}a_2|q|^{\lambda+1}+\frac{a_1}{|q|^{\beta_1-1}}
\Big) + a_1 |q|^\lambda-\frac{a_4}{2|q|^{\beta_1}}\\
&\qquad\le \sqrt{\varepsilon}|p|-\frac{1}{4} a_2\varepsilon |q|^{\lambda+1}-\frac{a_4}{4|q|^{\beta_1}}+C,
\end{align*}
where $C$ is now possibly very large as $\varepsilon$ is close to zero. As a consequence, we obtain the bound
\begin{align*}
\L_1 \bigg( H^2+\varepsilon\la q,p\ra - \frac{\la q,p\ra}{|q|} \bigg)& \le  -\sqrt{1+\varepsilon |p|^2}+2(2d-1)\varepsilon H-\frac{1}{4} a_2\varepsilon |q|^{\lambda+1}-\frac{a_4}{4|q|^{\beta_1}}+C . 
\end{align*}
In view of conditions \eqref{cond:U:U(x)=O(x^lambda+1)} and \eqref{cond:G:G<1/|x|^beta}, for $\varepsilon\in(0,1)$ small enough, we readily have
\begin{align*}
-\sqrt{1+\varepsilon |p|^2}+2(2d-1)\varepsilon H-\frac{1}{4} a_2\varepsilon |q|^{\lambda+1}-\frac{a_4}{4|q|^{\beta_1}} \le - cH+C,
\end{align*}
where $c>0$ is independent of $\varepsilon$. By shrinking $\varepsilon$ further to zero, we may subsume $2\varepsilon H$ into $-cH$ to produce the bound
\begin{align} \label{ineq:L_1(H^2+<q,p>-<q,p>/q)}
\L_1 \bigg( H^2+\varepsilon\la q,p\ra - \frac{\la q,p\ra}{|q|} \bigg) \le -cH+C.
\end{align}

Turning back to $V_1$ defined in \eqref{form:V_1}, by picking $\kappa_1$ sufficiently large and $\varepsilon$ sufficiently small, $V_1$ is guaranteed to be bounded from below by 1 and that the following holds 
\begin{align*}
cH^2-C \le V_1\le CH^2+C.
\end{align*}
This together with \eqref{ineq:L_1(H^2+<q,p>-<q,p>/q)} implies
\begin{align} \label{ineq:L_1V_1<-csqrt(V_1)+C}
\L_1 V_1\le -c \sqrt{V_1}+C,
\end{align}
which completes the base case $n=1$. 

Now for all $n\ge 2$, we apply It\^o's formula to $V_1^n$ to obtain the identity
\begin{align*}
\L_1(V_1^n) & = nV_1^{n-1}\L_1V_1+n(n-1)V_1^{n-2}\Big| 2H\frac{\varepsilon p}{\sqrt{1+\varepsilon |p|^2}}+\varepsilon q+\frac{q}{|q|}\Big|^2.
\end{align*} 
One the one hand, since $|q|^2$ is dominated by $U$ by virtue of condition \eqref{cond:U:U(x)=O(x^lambda+1)}, it is clear that
\begin{align*}
n(n-1)V_1^{n-2}\Big| 2H\frac{\varepsilon p}{\sqrt{1+\varepsilon |p|^2}}+\varepsilon q+\frac{q}{|q|}\Big|^2 \le cV_1^{n-1}.
\end{align*}
On the other hand, in light of estimate \eqref{ineq:L_1V_1<-csqrt(V_1)+C}, we readily have
\begin{align*}
 nV_1^{n-1}\L_1V_1 \le -c V_1^{n-\frac{1}{2}}+CV_1^{n-1}.
\end{align*}
Altogether, we invoke Young's inequality to deduce the bound
\begin{align*}
\L_1(V_1^n) \le -cV_1^{n-\frac{1}{2}}+C,
\end{align*}
which is the desired Lyapunov estimate \eqref{ineq:L_1.V_1^n}. The proof is thus complete.

\end{proof}

\subsection{Lyapunov functions for $N$-particle system} \label{sec:poly-mixing:Lyapunov:N-particle}

Similar to the previous subsection, we recall the Hamiltonian for the general $N$-particle system \eqref{eqn:rLE:N-particle:epsilon} is given by
\begin{align} \label{form:H_N}
H_N(\qb,\pb)= \varepsilon\sum_{i=1}^N U(q_i)+\varepsilon\sum_{1\le <j \le N}G(q_i-q_j)+\sum_{i=1}^N \sqrt{1+\varepsilon |p_i|^2}.
\end{align}
Motivated by the choice of $V_1$ in Lemma \ref{lem:Lyapunov:single-particle}, the first Lyapunov ansatz for \eqref{eqn:rLE:N-particle:epsilon} is of the form
\begin{align*}
V = H_N(\qb,\pb)^2 +\varepsilon\la \qb,\pb\ra-\sum_{1\le i<j\le N}\frac{\la q_i-q_j,p_i-p_j\ra}{|q_i-q_j|}+C.
\end{align*}
On the one hand, recalling the generator $\L_N$ given by \eqref{form:L_N}, the choice of $V$ above induces a ``bad" term arising from applying $\L_N$ to the cross term involving $|q_i-q_j|^{-1}$. More specifically, we have
\begin{align*}
\L_N\frac{\la q_i-q_j,p_i-p_j\ra}{|q_i-q_j|} \le c\frac{|p_i-p_j|}{|q_i-q_j|}+ \cdots.
\end{align*}
On the other hand, as demonstrated in the proof of Lemma \ref{lem:Lyapunov:single-particle}, a similar estimate shows that
$\L_N V$ has the order of $-|\pb|$, and thus does not dominate the ``bad" term $|p_i-p_j|/|q_i-q_j|$. In contrast, regarding the Langevin equation \eqref{eqn:LE:original}, this term does not appear when computing It\^o's formula on expression \eqref{form:V:Lu2019}, which therefore qualifies as a Lyapunov function for \eqref{eqn:LE:original}. 

In order to bypass the issue, we seek to reduce the singularity in $\la q_i-q_j,p_i-p_j\ra/|q_i-q_j|$  and increase the order of $-|\pb|$ in $\L_N V$. To this end, our modification of $V$ is as follows: letting $\varepsilon\in (0,1), \kappa_N,A_1,A_2>0$ be given and be chosen later, we introduce the function $V_N$ given by
%\begin{align} \label{form:V_N}
%V_N(\qb,\pb) = A_1 H_N(\qb,\pb)^{2+\gamma}+\varepsilon H_N(\qb,\pb)^\gamma\,\la \qb,\pb\ra -A_2\varepsilon^{1+\gamma}\sum_{i\neq j}\frac{\la q_i-q_j,p_i-p_j\ra}{|q_i-q_j|^\gamma}+\kappa_N.
%\end{align}
\begin{align} \label{form:V_N}
V_N(\qb,\pb) = A_1 H_N(\qb,\pb)^3+\varepsilon H_N(\qb,\pb)\,\la \qb,\pb\ra -A_2\varepsilon^{2}\sum_{i\neq j}\frac{\la q_i-q_j,p_i-p_j\ra}{|q_i-q_j|^{\beta_1-1}}+\kappa_N.
\end{align}
By tuning the parameters $\kappa_N,A_1,A_2$ carefully together with taking $\varepsilon$ sufficiently small, $V_N$ is indeed a Lyapunov for \eqref{eqn:rLE:N-particle:epsilon}, provided we  impose Assumption \nameref{cond:G2} in addition to \nameref{cond:U} and \nameref{cond:G1}. This is summarized in the following result.

\begin{lemma}  \label{lem:Lyapunov:N-particle} For all $N\ge2$, suppose that Assumptions \textup{\nameref{cond:U}}, \textup{\nameref{cond:G1}} and \textup{\nameref{cond:G2}} hold. Then, for all $\varepsilon\in(0,1)$ sufficiently small, the function $V_N$ defined in \eqref{form:V_N} is a Lyapunov function for $P_t^\varepsilon$. In particular, the following holds
\begin{align} \label{ineq:L_N.V_N^n}
\L_N\big[ V_N(\qb,\pb)^n \big] \le -c_nV_N(\qb,\pb)^{n-\frac{1}{3}}+C_n,\quad (\qb,\pb)\in \X, \, n\ge 1, 
\end{align}
for some positive constants $c_n$ and $C_n$, possibly dependent on $\varepsilon$ but independent of $(\qb,\pb)$.

\end{lemma}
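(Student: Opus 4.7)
The plan is to mimic the structure of the single-particle proof of Lemma \ref{lem:Lyapunov:single-particle}, but working with $H_N^3$ instead of $H_N^2$ so that the dominant dissipation has enough strength to absorb the new ``bad'' singular interaction terms, which are in turn tamed by the auxiliary singular correction weighted by $A_2\varepsilon^2$. I will first treat $n=1$ in detail and then bootstrap to $n\ge 2$ by It\^o's formula exactly as in the single-particle case.

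First, I would compute $\L_N H_N$ component by component. Writing $\nabla_{p_i}H_N=\varepsilon p_i/\sqrt{1+\varepsilon|p_i|^2}$, the direct analogue of \eqref{eqn:L_1.H} gives
\[
\L_N H_N \;=\; -\sum_{i=1}^N\frac{\varepsilon|p_i|^2}{1+\varepsilon|p_i|^2}+\sum_{i=1}^N\frac{d\varepsilon+(d-1)\varepsilon^2|p_i|^2}{(1+\varepsilon|p_i|^2)^{3/2}},
\]
since the singular drifts cancel in the standard Hamiltonian way. Then It\^o's formula gives $\L_N H_N^3=3H_N^2\,\L_N H_N+3H_N\sum_i|\nabla_{p_i}H_N|^2$, and because $|\nabla_{p_i}H_N|^2\le\varepsilon$, the correction is at most $3N\varepsilon H_N$, which is $o(H_N^2)$. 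Hence, up to error dominated by $H_N$,
\[
\L_N\bigl(A_1H_N^3\bigr)\;\le\;-3A_1H_N^2\sum_{i=1}^N\frac{\varepsilon|p_i|^2}{1+\varepsilon|p_i|^2}+CA_1\varepsilon H_N^2+CA_1\varepsilon H_N,
\]
which yields a dissipation of order $-H_N^2$ whenever some $|p_i|$ is at least $\varepsilon^{-1/2}$; for bounded $|p|$ we will recover dissipation from the other two terms.

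Next, I would expand $\L_N(\varepsilon H_N\langle\qb,\pb\rangle)$ via the product rule, picking up the carr\'e-du-champ cross contribution $2\varepsilon\sum_i\nabla_{p_i}H_N\cdot q_i$, which is bounded by $C\varepsilon^{3/2}|\qb|$. The term $\varepsilon H_N\cdot\L_N\langle\qb,\pb\rangle$ is treated by the single-particle bound \eqref{ineq:L_1.<q,p>}, now summed over $i$ and with an added contribution $-\sum_{i\neq j}\langle q_i,\nabla G(q_i-q_j)\rangle$. Using the evenness-oddness symmetry in Assumption \nameref{cond:G1}(i) this collapses to $\tfrac12\sum_{i\neq j}\langle q_i-q_j,\nabla G(q_i-q_j)\rangle$, which by \eqref{cond:G:grad.G(x)<1/|x|^beta} is bounded by $C\sum_{i\neq j}(1+|q_i-q_j|^{-(\beta_1-1)})$. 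All told, this step produces a dissipation of order $-c\varepsilon H_N|\qb|^{\lambda+1}$ in the confining direction, plus bad terms of order $H_N|\pb|/\sqrt{\varepsilon}$ and $H_N\sum_{i\neq j}|q_i-q_j|^{-(\beta_1-1)}$.

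The hardest step is $\L_N$ of the singular correction. Writing $r_{ij}=q_i-q_j$, $s_{ij}=p_i-p_j$, a direct computation yields terms of three kinds: (a) the diffusive part, namely $-|s_{ij}|^2/(|r_{ij}|^{\beta_1-1}\sqrt{1+\varepsilon|p|^2})$ plus non-negative radial terms; (b) the confining/interacting part, which by Assumption \nameref{cond:G1}(ii) contributes $2\langle r_{ij},\nabla G(r_{ij})\rangle/|r_{ij}|^{\beta_1-1}\sim -2a_4/|r_{ij}|^{2\beta_1-2}$ modulo terms of order $|r_{ij}|^{-(\beta_1+\beta_2-1)}$; and (c) the friction part, which produces a bad term of order $|s_{ij}|/|r_{ij}|^{\beta_1-1}$. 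Multiplying by $-A_2\varepsilon^2$ flips the sign of (b) into the desired dissipation $+2A_2\varepsilon^2 a_4\sum_{i\neq j}|r_{ij}|^{-(2\beta_1-2)}$. Here Assumption \nameref{cond:G2} is crucial: since $\beta_2<\beta_1-1$, we have $\beta_1+\beta_2-1<2\beta_1-2$, so the $|r_{ij}|^{-(\beta_1+\beta_2-1)}$ error is absorbed into the dissipation $|r_{ij}|^{-(2\beta_1-2)}$, and since $\beta_1\le 2$ we also have $2\beta_1-2\le\beta_1$, so the bad $|r_{ij}|^{-(\beta_1-1)}$ term coming out of Step 3 is similarly absorbed.

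Finally, I would combine the three estimates and handle the remaining bad terms by Young's inequality: $|s_{ij}|/|r_{ij}|^{\beta_1-1}\le\delta|s_{ij}|^2+\delta^{-1}|r_{ij}|^{-(2\beta_1-2)}$, and $H_N|\pb|/\sqrt{\varepsilon}\le\delta H_N^2\cdot\varepsilon|\pb|^2/(1+\varepsilon|\pb|^2)+\delta^{-1}\varepsilon^{-2}(1+|\pb|^2)$ (splitting cases $\varepsilon|\pb|^2\gtrless 1$). With $A_1\gg A_2\gg 1$ and $\varepsilon$ sufficiently small, all bad contributions are absorbed and one obtains
\[
\L_N V_N\;\le\;-c\Bigl(H_N^2+\varepsilon H_N|\qb|^{\lambda+1}+\sum_{i\neq j}\tfrac{\varepsilon^2}{|r_{ij}|^{2\beta_1-2}}\Bigr)+C.
\]
Because $V_N\le CH_N^3+C$ and the bracketed quantity is coercive in $H_N^{2}$, this gives $\L_N V_N\le -cV_N^{2/3}+C$ after choosing $\kappa_N$ large enough to ensure $V_N\ge 1$. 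For general $n\ge 2$, applying It\^o's formula yields $\L_N V_N^n=nV_N^{n-1}\L_N V_N+n(n-1)V_N^{n-2}|\nabla_\pb V_N|^2$; a direct computation based on $|\nabla_{p_i}H_N|\le\sqrt{\varepsilon}$ bounds $|\nabla_\pb V_N|^2\le CV_N^{4/3}$, so the correction is at most $CV_N^{n-2/3}\ll V_N^{n-1/3}$, and \eqref{ineq:L_N.V_N^n} follows. The main obstacle throughout is the interplay in Step 4: guaranteeing that the friction-generated bad term $|s_{ij}|/|r_{ij}|^{\beta_1-1}$ can be split between the $|p|^2$-dissipation coming from $H_N^3$ and the singular dissipation coming from the correction, which is precisely why the range $\beta_1\in(1,2]$ in Assumption \nameref{cond:G2} is required.
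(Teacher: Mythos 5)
Your overall architecture matches the paper's proof: the same three-way decomposition of $V_N$ into $A_1H_N^3$, $\varepsilon H_N\la \qb,\pb\ra$, and the $\varepsilon^2$-weighted singular correction, the same case split on whether $\varepsilon\max_i|p_i|^2$ is large or small, the same order of choosing $A_1$ then $A_2$ then $\varepsilon$, and the same It\^o bootstrap for $n\ge 2$ using $|\nabla_{\pb}V_N|^2\le CV_N^{4/3}$. Most of the sketch would go through as written (modulo harmless constant slips, e.g.\ the It\^o correction to $H_N^3$ is $6H_N\sum_i|\nabla_{p_i}H_N|^2$, not $3H_N\sum_i|\nabla_{p_i}H_N|^2$).

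There is, however, one genuine gap, and it sits at the most delicate point of the argument. When you apply $\L_N$ to the correction $-\sum_{i<j}\la p_i-p_j,(q_i-q_j)/|q_i-q_j|^{\beta_1-1}\ra$, the drift $-\sum_{k\neq i}\grad G(q_i-q_k)$ in the $p_i$-equation produces the full double sum $\sum_i\la\sum_{j\neq i}\frac{q_i-q_j}{|q_i-q_j|^{\beta_1-1}},\sum_{k\neq i}\grad G(q_i-q_k)\ra$, not just the diagonal pairing $k=j$. Your item (b) only accounts for the diagonal terms $\la r_{ij},\grad G(r_{ij})\ra/|r_{ij}|^{\beta_1-1}$; for $N\ge 3$ the off-diagonal three-body terms $\la\frac{q_i-q_j}{|q_i-q_j|^{\beta_1-1}},\frac{q_i-q_k}{|q_i-q_k|^{\beta_1+1}}\ra$ with $k\neq j$ have no evident sign and cannot be absorbed by Young's inequality into the available dissipation (they can be as singular as $|q_i-q_k|^{-\beta_1}$, which exceeds the $|r|^{-(2\beta_1-2)}$ dissipation when $\beta_1<2$). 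The paper disposes of them via the geometric inequality of Lemma \ref{lem:|q_i-q_i|}, which shows that after summing over $i$ the full double sum is bounded above by $-2a_4\sum_{i<j}|q_i-q_j|^{-(2\beta_1-2)}$; that lemma is an angle/triangle argument that works precisely because the exponent $\gamma=\beta_1-1$ lies in $(0,1]$ — this, rather than the absorption $2\beta_1-2\le\beta_1$ you cite, is the essential reason Assumption \nameref{cond:G2} imposes $\beta_1\in(1,2]$. Without this ingredient (or some substitute for it) your Step 4 does not close for $N\ge 3$.
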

\begin{proof}
Similar to the proof of Lemma \ref{lem:Lyapunov:single-particle}, in order to establish \eqref{ineq:L_N.V_N^n}, we shall proceed by induction on $n$. We start with the base case $n=1$ and recall $\L_N$ and $H_N$ from \eqref{form:L_N} and \eqref{form:H_N} respectively
\begin{align} \label{eqn:L_N.H_N}
\L_N H_N = -\sum_{i=1}^N \frac{\varepsilon|p_i|^2}{1+\varepsilon|p_i|^2}+\sum_{i=1}^N\frac{d\varepsilon+(d-1)\varepsilon^2|p_i|^2}{(1+\varepsilon|p_i|^2)^{3/2}}.
\end{align} 
It follows that 
\begin{align*}
\L_N H_N^{3} &= 3 H_N^{2} \bigg[-\sum_{i=1}^N \frac{\varepsilon|p_i|^2}{1+\varepsilon|p_i|^2}+\sum_{i=1}^N\frac{d\varepsilon+(d-1)\varepsilon^2|p_i|^2}{(1+\varepsilon|p_i|^2)^{3/2}}\bigg]\\
&\qquad +6H_N\sum_{i=1}^N\frac{\varepsilon^2|p_i|^2}{1+\varepsilon|p_i|^2}.
\end{align*}
Since
\begin{align} \label{ineq:d.epsilon}
\frac{d\varepsilon+(d-1)\varepsilon^2|p_i|^2}{(1+\varepsilon|p_i|^2)^{3/2}} \le (2d-1)\varepsilon,\quad \text{and}\quad \frac{\varepsilon^2|p_i|^2}{1+\varepsilon|p_i|^2} \le \varepsilon,
\end{align}
we deduce
\begin{align} \label{ineq:L_N.H_N^3}
\L_N H_N^3 &\le 3 H_N^2 \bigg[-\sum_{i=1}^N \frac{\varepsilon|p_i|^2}{1+\varepsilon|p_i|^2}+N(2d-1)\varepsilon\bigg]+6N\varepsilon H_N.
\end{align}

Next, with regard to $\varepsilon H_N\la \qb,\pb\ra$, a routine calculation together with identity \eqref{eqn:L_N.H_N} gives 
\begin{align} \label{eqn:L_N.H_N.<q,p>}
& \L_N \big(\varepsilon H_N\,\la\qb,\pb \ra\big) \notag \\
& = \varepsilon H_N \L_N\la \qb,\pb\ra+\varepsilon  \bigg[-\sum_{i=1}^N \frac{\varepsilon|p_i|^2}{1+\varepsilon|p_i|^2}+\sum_{i=1}^N\frac{d\varepsilon+(d-1)\varepsilon^2|p_i|^2}{(1+\varepsilon|p_i|^2)^{3/2}} \bigg] \la \qb,\pb\ra \notag \\
&\qquad+ 2\varepsilon^2\sum_{i=1}^N \frac{\la p_i,q_i\ra}{\sqrt{1+\varepsilon|p_i|^2}} \notag \\
&=I_1+I_2+I_3.
\end{align}
We proceed to estimate each term on the right hand side of \eqref{eqn:L_N.H_N.<q,p>}. Concerning the term $I_1$ involving $\L_N\la \qb,\pb\ra$, we have
\begin{align*}
\L_N\la \qb,\pb\ra & = \sum_{i=1}^N\frac{|p_i|^2}{\sqrt{1+\varepsilon|p_i|^2}}  - \sum_{i=1}^N\frac{\la q_i,p_i\ra}{\sqrt{1+\varepsilon |p_i|^2}}-\la q_i,\grad U(q_i)\ra\\
&\qquad - \sum_{ i\neq j}\la q_i-q_j,\grad G(q_i-q_j)\ra.
\end{align*}
We employ the same argument as in \eqref{ineq:L_1.<q,p>} while making use of conditions \eqref{cond:U:x.U'(x)>-x^(lambda+1)} and \eqref{cond:G:grad.G(x)<1/|x|^beta} to infer
\begin{align*}
&I_1 =\varepsilon H_N\L_N\la \qb,\pb\ra \\
& \le H_N \sum_{i=1}^N \big[ \sqrt{\varepsilon}|p_i|+\sqrt{\varepsilon}|q_i|-\varepsilon a_2|q_i|^{\lambda+1}\big]+H_N\sum_{i\neq j}\bigg[ \varepsilon a_1|q_i-q_j|+\varepsilon a_1 \frac{1}{|q_i-q_j|^{\beta_1-1}} \bigg]+C\varepsilon H_N\\
&\le  H_N \sum_{i=1}^N \big[ \sqrt{\varepsilon}|p_i|+\sqrt{\varepsilon}|q_i|+C\varepsilon|q_i|-\varepsilon a_2|q_i|^{\lambda+1}+C\big]+H_N\sum_{i\neq j}\varepsilon a_1 \frac{1}{|q_i-q_j|^{\beta_1-1}},
\end{align*}
for some positive constant $C$ independent of $\varepsilon$. In view of \eqref{cond:U+G>1:N>1}, it holds that
\begin{align*}
    H_N\ge \frac{1}{2}\varepsilon\sum_{i=1}^NU(q_i).
\end{align*}
This together with the choice $\lambda\ge 1$ by virtue of Assumption \nameref{cond:U} implies the bound
\begin{align*}
\sqrt{\varepsilon}|	q_i|\le C\varepsilon |q_i|^{\lambda+1}+C \le C H_N,
\end{align*}
We further deduce 
\begin{align*}
&I_1 =\varepsilon H_N\L_N\la \qb,\pb\ra \\
&\le  H_N \sum_{i=1}^N \Big[ \sqrt{\varepsilon}|p_i|-\frac{1}{2}\varepsilon a_2|q_i|^{\lambda+1}+C\Big]+H_N\sum_{i\neq j}\varepsilon a_1 \frac{1}{|q_i-q_j|^{\beta_1-1}} \\
&\le -\frac{1}{2}\varepsilon a_2 H_N \sum_{i=1}^N|q_i|^{\lambda+1}+CH_N\bigg[\sum_{i=1}^N \sqrt{\varepsilon}|p_i| + \sum_{1\le i<j\le N}\frac{\varepsilon}{|q_i-q_j|^{\beta_1-1}}\bigg]+CH_N.
\end{align*}
Likewise, 
\begin{align*}
I_2\le C H_N \sum_{i=1}^N \sqrt{\varepsilon}|p_i|,\quad \text{and}\quad
I_3\le C\varepsilon H_N.
\end{align*}
Altogether with \eqref{eqn:L_N.H_N.<q,p>}, we obtain the bound
\begin{align} \label{ineq:L_N.H_N.<q,p>}
&\L_N\big(\varepsilon H_N \la \qb,\pb\ra \big) \notag \\
 & \le -\frac{1}{2}\varepsilon a_2 H_N \sum_{i=1}^N|q_i|^{\lambda+1}+CH_N\bigg[\sum_{i=1}^N \sqrt{\varepsilon}|p_i| + \sum_{1\le i<j\le N}\frac{\varepsilon}{|q_i-q_j|^{\beta_1-1}}\bigg]+CH_N.
\end{align}
Turning to the term involving $\la q_i-q_j,p_i-p_j\ra/|q_i-q_j|^{\beta_1-1}$ on the right hand side of expression \eqref{form:V_N}. A routine calculation produces
\begin{align} \label{eqn:L_N.<q_i-q_j,p_i-p_j>/|q_i-q_j|^beta1}
&\L_N\bigg(- \sum_{1\le i<j\le N}\Big\la p_i-p_j,\frac{ q_i-q_j}{|q_i-q_j|^{\beta_1-1}}\Big\ra\bigg)\notag  \\
&= - \sum_{1\le i<j\le N} \bigg\la \frac{p_i}{\sqrt{1+\varepsilon|p_i|^2}}- \frac{p_j}{\sqrt{1+\varepsilon|p_j|^2}}, \frac{p_i-p_j}{|q_i-q_j|^{\beta_1-1}} \bigg\ra \notag \\
&\qquad +(\beta_1-1) \sum_{1\le i<j\le N}\la p_i-p_j,q_i-q_j\ra \bigg\la \frac{p_i}{\sqrt{1+\varepsilon|p_i|^2}}- \frac{p_j}{\sqrt{1+\varepsilon|p_j|^2}}, \frac{q_i-q_j}{|q_i-q_j|^{\beta_1+1} } \bigg\ra \notag \\
&\qquad + \sum_{1\le i<j\le N} \bigg\la \frac{q_i-q_j}{|q_i-q_j|^{\beta_1-1}}, \frac{p_i}{\sqrt{1+\varepsilon|p_i|^2}}- \frac{p_j}{\sqrt{1+\varepsilon|p_j|^2}} \bigg\ra\notag \\
&\qquad+\sum_{1\le i<j\le N} \frac{\la \grad U(q_i) -\grad U(q_j),q_i-q_j\ra}{|q_i-q_j|^{\beta_1-1}}        +\sum_{i=1}^N\Big\la  \sum_{j\neq i}\frac{q_i-q_j}{|q_i-q_j|^{\beta_1-1}}  ,\sum_{k\neq i}\grad G(q_i-q_k)    \Big\ra \notag \\
&= I_4+\dots+I_8. 
\end{align}
We proceed to estimate the above right-hand side while making use of Young's inequality. It is clear that
\begin{align*}
\varepsilon^2( I_4+I_5) \le C \sum_{1\le i<j\le N} \sqrt{\varepsilon}|p_i-p_j|\cdot \frac{\varepsilon}{|q_i-q_j|^{\beta_1-1}},
\end{align*}
whereas
\begin{align*}
\varepsilon^2 I_6 \le C\varepsilon^{\frac{1}{2}}\sum_{i=1}^N\varepsilon|q_i|^{2-\beta_1}.
\end{align*}
Concerning $I_7$, we invoke condition \eqref{cond:U:U'(x)=O(x^lambda)} and obtain
\begin{align*}
\sum_{1\le i<j\le N} \frac{\la \grad U(q_i) -\grad U(q_j),q_i-q_j\ra}{|q_i-q_j|^{\beta_1-1}}      
&\le C\sum_{i=1}^N|\grad U(q_i)|\sum_{i=1}^N |q_i|^{2-\beta_1}\le C\Big(1+\sum_{i=1}^N |q_i|^{\lambda+2-\beta_1}\Big),
\end{align*}
whence
\begin{align*}
\varepsilon^{2}I_7 \le \varepsilon
C\Big(1+\sum_{i=1}^N  \varepsilon|q_i|^{\lambda+2-\beta_1}\Big).
\end{align*}
With regard to $I_8$ involving $\grad G$, in light of condition \eqref{cond:G:|grad.G(x)+q/|x|^beta_1|<1/|x|^beta_2}, we recast $I_8$ as follows:
\begin{align*}
& \sum_{i=1}^N\Big\la  \sum_{j\neq i}\frac{q_i-q_j}{|q_i-q_j|^{\beta_1-1}}  ,\sum_{k\neq i}\grad G(q_i-q_k)    \Big\ra \\
&=-a_4\sum_{i=1}^N\Big\la  \sum_{j\neq i}\frac{q_i-q_j}{|q_i-q_j|^{\beta_1-1}}  ,\sum_{k\neq i}\frac{q_i-q_k}{|q_i-q_k|^{\beta_1+1}} \Big\ra\\
&\qquad+ \sum_{i=1}^N\Big\la  \sum_{j\neq i}\frac{q_i-q_j}{|q_i-q_j|^{\beta_1-1}}  ,\sum_{k\neq i}\grad G(q_i-q_k)+ a_4\frac{q_i-q_k}{|q_i-q_k|^{\beta_1+1}}   \Big\ra.
\end{align*}
On the one hand, by virtue of Assumption \nameref{cond:G2}, i.e., $\beta_1\in(1,2]$, we invoke Lemma~\ref{lem:|q_i-q_i|} to get
\begin{align*}
-a_4\sum_{i=1}^N\Big\la  \sum_{j\neq i}\frac{q_i-q_j}{|q_i-q_j|^{\beta_1-1}}  ,\sum_{k\neq i}\frac{q_i-q_k}{|q_i-q_k|^{\beta_1+1}} \Big\ra \le -2a_4 \sum_{1\le i<j\le N}\frac{1}{|q_i-q_j|^{2\beta_1-2}}.
\end{align*}
On the other hand, condition~\eqref{cond:G:|grad.G(x)+q/|x|^beta_1|<1/|x|^beta_2} implies the bound
\begin{align*}
&  \sum_{i=1}^N\Big\la  \sum_{j\neq i}\frac{q_i-q_j}{|q_i-q_j|^{\beta_1-1}}  ,\sum_{k\neq i}\grad G(q_i-q_k)+ a_4\frac{q_i-q_k}{|q_i-q_k|^{\beta_1+1}}   \Big\ra\\
&\qquad\le C\sum_{i=1}^N |q_i|^{2-\beta_1} \sum_{i=1}^N\sum_{\ell\neq i}\Big|\grad G(q_i-x_\ell)+ a_4\frac{q_i-q_\ell}{|q_i-q_\ell|^{\beta_1+1}}   \Big|\\
&\qquad\le C\sum_{i=1}^N |q_i|^{2-\beta_1}\Big[ \sum_{1\le i<j\le N}\frac{1}{|q_i-q_j|^{\beta_2}}+1\Big]\\
&\qquad\le C\sum_{i=1}^N |q_i|^{4-2\beta_1}+ C\sum_{1\le i<j\le N}\frac{1}{|q_i-q_j|^{2\beta_2}}+C.
\end{align*}
Since $\beta_2\le \beta_1-1$, cf. Assumption \nameref{cond:G2}, it is clear that $|q_i-q_j|^{2\beta_2}$ can be subsumed into $|q_i-q_j|^{2\beta_1-2}$. In other words, we have
\begin{align*}
\varepsilon^2 I_8\le -\frac{3}{2}a_4 \varepsilon^2 \sum_{1\le i<j\le N}\frac{1}{|q_i-q_j|^{ 2\beta_1-2 }} + C\varepsilon^2\sum_{i=1}^N |q_i|^{4-2\beta_1}+C.
\end{align*}
We collect the estimates on $I_4,\dots,I_8$ to obtain the bound
\begin{align*}
&\varepsilon^{2}\L_N\bigg(- \sum_{1\le i<j\le N}\Big\la p_i-p_j,\frac{ q_i-q_j}{|q_i-q_j|^{\beta_1}}\Big\ra\bigg)\notag \\
&\le C \sum_{1\le i<j\le N} \sqrt{\varepsilon}|p_i-p_j|\cdot \frac{\varepsilon}{|q_i-q_j|^{\beta_1-1}} + \varepsilon^{\frac{1}{2}}\sum_{i=1}^N\varepsilon|q_i|^{2-\beta_1}+C\\
&\qquad  +\sum_{i=1}^N\varepsilon^2|q_i|^{\lambda+ 2-\beta_1} -\frac{3}{2}a_4 \varepsilon^2 \sum_{1\le i<j\le N}\frac{1}{|q_i-q_j|^{2\beta_1-2}} + C\varepsilon^2\sum_{i=1}^N |q_i|^{4-2\beta_1}.
\end{align*}
Recalling from Assumption \nameref{cond:G2} and \nameref{cond:U} that $\beta_1\in(1,2]$ and that $\lambda\ge 1$, respectively, we note that the terms involving $|q_i|^{2-\beta_1}$, $|q_i|^{\lambda+ 2-\beta_1}$ and $|q_i|^{4-2\beta_1}$ can be controlled by $U(q_i)$. In other words,
\begin{align*}
\varepsilon^{\frac{3}{2}}|q_i|^{2-\beta_1} + \varepsilon^2|q_i|^{\lambda+ 2-\beta_1} +\varepsilon^2 |q_i|^{4-2\beta_1} \le C\varepsilon^{\frac{3}{2}} U(q_i) +C\le C\varepsilon^{\frac{1}{2}}H_N+C.
\end{align*}
Also, by Young's inequality, it holds that
\begin{align*}
 \sum_{1\le i<j\le N} \sqrt{\varepsilon}|p_i-p_j|\cdot \frac{\varepsilon}{|q_i-q_j|^{\beta_1-1}} &\le C\sum_{i=1}^N\varepsilon|p_i|^2+ \frac{1}{100}a_4\sum_{1\le i<j\le N} \frac{\varepsilon^2}{|q_i-q_j|^{2\beta_1-2}} \\
 &\le CH_N+ \frac{1}{100} a_4\sum_{1\le i<j\le N} \frac{\varepsilon^2}{|q_i-q_j|^{2\beta_1-2}}.
\end{align*}
It follows that
\begin{align} \label{ineq:L_N.<p_i-p_j,q_i-q_j>/|q_i-q_j|^beta1}
&\varepsilon^{2}\L_N\bigg(- \sum_{1\le i<j\le N}\Big\la p_i-p_j,\frac{ q_i-q_j}{|q_i-q_j|^{\beta_1-1}}\Big\ra\bigg)\notag \\
&\qquad\le CH_N\sum_{i=1}^N \sqrt{\varepsilon}|p_i| - a_4\sum_{1\le i<j\le N} \frac{\varepsilon^2}{|q_i-q_j|^{2\beta_1-2}}+C.
\end{align}
We emphasize that the positive constant $C=C(N,a_4)$ on the above right hand side does not depend on $\varepsilon$.

Now, letting $A_1$ and $A_2$ be given and be chosen later, from \eqref{ineq:L_N.H_N^3}, \eqref{ineq:L_N.H_N.<q,p>} and \eqref{ineq:L_N.<p_i-p_j,q_i-q_j>/|q_i-q_j|^beta1}, we get
\begin{align*}
& \L_N\bigg(A_1H_N^3+\varepsilon H_N\la \qb,\pb\ra-A_2\varepsilon^{2} \sum_{1\le i<j\le N}\Big\la p_i-p_j,\frac{ q_i-q_j}{|q_i-q_j|^{\beta_1-1}}\Big\ra\bigg)\\
&\le  3A_1 H_N^2 \bigg[-\sum_{i=1}^N \frac{\varepsilon|p_i|^2}{1+\varepsilon|p_i|^2}+N(2d-1)\varepsilon\bigg] -\frac{1}{2}\varepsilon a_2 H_N\sum_{i=1}^N |q_i|^{\lambda+1}\\
&\qquad-a_4A_2\sum_{1\le i<j\le N} \frac{\varepsilon^2}{|q_i-q_j|^{2\beta_1-2}}+I_9,
\end{align*}
where 
\begin{align*}
I_9 &= C A_1\varepsilon H_  N+CH_N\bigg[\sum_{i=1}^N \sqrt{\varepsilon}|p_i| + \sum_{1\le i<j\le N}\frac{\varepsilon}{|q_i-q_j|^{\beta_1-1}}\bigg]\\
&\qquad+CH_N+ CA_2H_N+CA_2+C.
\end{align*}
At this point, we claim that by tuning $A_1,A_2$ appropriately, the following holds
\begin{align} \label{ineq:L_N.H_N^3+H_N<q,p>+<q,p>/|q|^beta}
& \L_N\bigg(A_1H_N^3+\varepsilon H_N\la \qb,\pb\ra-A_2\varepsilon^{2} \sum_{1\le i<j\le N}\Big\la p_i-p_j,\frac{ q_i-q_j}{|q_i-q_j|^{\beta_1-1}}\Big\ra\bigg) \notag \\
&\quad\le -c_1 H_N^2 + C,
\end{align}
for all $\varepsilon$ sufficiently small. In order to prove \eqref{ineq:L_N.H_N^3+H_N<q,p>+<q,p>/|q|^beta}, we will consider two cases depending on the value of $\varepsilon\max\{|p_1|^2,\dots,|p_N|^2\}$.

Case 1: $\varepsilon\max\{|p_1|^2,\dots,|p_N|^2\}\ge 1$. In this case, on the one hand, for $\varepsilon$ sufficiently small,
\begin{align*}
3A_1 H_N^2 \bigg[-\sum_{i=1}^N \frac{\varepsilon|p_i|^2}{1+\varepsilon|p_i|^2}+N(2d-1)\varepsilon\bigg]  \le -\frac{3}{4}A_1H_N^2.
\end{align*}
On the other hand, the positive term $I_9$ satisfies
\begin{align*}
I_9&\le CA_1\varepsilon H_N+C H_N^2+C H_N+CA_2H_N+C+CA_2\\
&\le CH_N^2+D,
\end{align*}
for some positive constants $C$ and $D=D(\varepsilon,A_1,A_2)$. Since $C$ is independent of $\varepsilon,A_1,A_2$, we may take $A_1$ large enough so as to deduce
\begin{align*}
&\L_N\bigg(A_1H_N^3+\varepsilon H_N\la \qb,\pb\ra-A_2\varepsilon^{2} \sum_{1\le i<j\le N}\Big\la p_i-p_j,\frac{ q_i-q_j}{|q_i-q_j|^{\beta_1-1}}\Big\ra\bigg) \\
&\qquad\le -\frac{3}{4}A_1H_N^2+CH_N^2+D\\
&\qquad\le -\frac{1}{2}A_1 H_N^2+D.
\end{align*}
This produces \eqref{ineq:L_N.H_N^3+H_N<q,p>+<q,p>/|q|^beta} for Case 1.

Case 2: $\varepsilon\max\{|p_1|^2,\dots,|p_N|^2\}\le 1$. In this case, we note that \eqref{form:H_N} implies
\begin{align*}
     H_N\le   \sum_{i=1}^N\varepsilon U(q_i)+ \sum_{1\le i<  j\le N}\varepsilon G(q_i-q_j)+ N\sqrt{2}.
\end{align*}
This together with \eqref{cond:G2:G<1/|x|^(beta-1)} yields the bound
\begin{align*}
& CH_N\bigg[\sum_{i=1}^N \sqrt{\varepsilon}|p_i| + \sum_{1\le i<j\le N}\frac{\varepsilon}{|q_i-q_j|^{\beta_1-1}}\bigg] \\
&\le CH_N+C\sum_{i=1}^N \varepsilon U(q_i) \sum_{1\le i<j\le N}\frac{\varepsilon}{|q_i-q_j|^{\beta_1-1}} + C \sum_{1\le i<j\le N}\frac{\varepsilon^2}{|q_i-q_j|^{2\beta_1-2}}+C.
\end{align*}
Concerning the term involving $U$ on the above right-hand side, we employ \eqref{cond:U:U(x)=O(x^lambda+1)} and Young's inequality to infer
\begin{align*}
    &C\sum_{i=1}^N \varepsilon U(q_i) \sum_{1\le i<j\le N}\frac{\varepsilon}{|q_i-q_j|^{\beta_1-1}}\\
    &\le \frac{a_2}{100a_1} \Big[\sum_{i=1}^N \varepsilon U(q_i)\Big]^2 +   C  \sum_{1\le i<j\le N}\frac{\varepsilon^2}{|q_i-q_j|^{2\beta_1-2}}\\
    & \le \frac{1}{100}a_2\sum_{i=1}^N \varepsilon U(q_i)\sum_{i=1}^N \varepsilon |q_i|^{\lambda+1} + \frac{1}{100}a_2\sum_{i=1}^N \varepsilon U(q_i)+   C  \sum_{1\le i<j\le N}\frac{\varepsilon^2}{|q_i-q_j|^{2\beta_1-2}}.
\end{align*}
In view of \eqref{cond:U+G>1:N>1}, we have
\begin{align*}
\sum_{i=1}^N \varepsilon U(q_i) \le 2H_N,
\end{align*}
whence
\begin{align*}
    &C\sum_{i=1}^N \varepsilon U(q_i) \sum_{1\le i<j\le N}\frac{\varepsilon}{|q_i-q_j|^{\beta_1-1}}\\ 
    &\le \frac{1}{50}a_2H_N\sum_{i=1}^N \varepsilon |q_i|^{\lambda+1} + \frac{1}{50}a_2H_N+   C  \sum_{1\le i<j\le N}\frac{\varepsilon^2}{|q_i-q_j|^{2\beta_1-2}}.
\end{align*}
It follows that 
\begin{align*}
& CH_N\bigg[\sum_{i=1}^N \sqrt{\varepsilon}|p_i| + \sum_{1\le i<j\le N}\frac{\varepsilon}{|q_i-q_j|^{\beta_1-1}}\bigg] \\
&\le CH_N+\frac{1}{50}a_2H_N\sum_{i=1}^N \varepsilon |q_i|^{\lambda+1} + C \sum_{1\le i<j\le N}\frac{\varepsilon^2}{|q_i-q_j|^{2\beta_1-2}}+C.
\end{align*}
We emphasize that the above constant $C$ does not depend on $A_1$, $A_2$ and $\varepsilon$. Turning back to $I_9$, we have
\begin{align*}
I_9 &= C A_1\varepsilon H_  N+CH_N\bigg[\sum_{i=1}^N \sqrt{\varepsilon}|p_i| + \sum_{1\le i<j\le N}\frac{\varepsilon}{|q_i-q_j|^{\beta_1-1}}\bigg]\\
&\qquad+CH_N+ CA_2H_N+CA_2+C\\
&\le C(A_1\varepsilon+A_2+1)H_N +\frac{1}{50}a_2H_N\sum_{i=1}^N \varepsilon |q_i|^{\lambda+1} + C \sum_{1\le i<j\le N}\frac{\varepsilon^2}{|q_i-q_j|^{2\beta_1-2}} + D.
\end{align*} 
Also, 
\begin{align*}
 3A_1 H_N^2 \bigg[-\sum_{i=1}^N \frac{\varepsilon|p_i|^2}{1+\varepsilon|p_i|^2}+N(2d-1)\varepsilon\bigg]  \le CA_1\varepsilon H_N^2.
\end{align*}
Altogether, we get
\begin{align*}
&\L_N\bigg(A_1H_N^3+\varepsilon H_N\la \qb,\pb\ra-A_2\varepsilon^{2} \sum_{1\le i<j\le N}\Big\la p_i-p_j,\frac{ q_i-q_j}{|q_i-q_j|^{\beta_1-1}}\Big\ra\bigg) \\
&\le CA_1\varepsilon H_N^2+ C\sum_{1\le i<j\le N}\frac{\varepsilon^2}{|q_i-q_j|^{2\beta_1-2}}-\frac{1}{4}a_2 H_N \sum_{i=1}^N \varepsilon|q_i|^{\lambda+1}\\
&\qquad-a_4A_2 \sum_{1\le i<j\le N}\frac{\varepsilon^2}{|q_i-q_j|^{2\beta_1-2}} +D.
\end{align*}
Since $C$ does not depend on $A_2$, we may take $A_2$ sufficiently large to obtain
\begin{align*}
&\L_N\bigg(A_1H_N^3+\varepsilon H_N\la \qb,\pb\ra-A_2\varepsilon^{2} \sum_{1\le i<j\le N}\Big\la p_i-p_j,\frac{ q_i-q_j}{|q_i-q_j|^{\beta_1-1}}\Big\ra\bigg) \\
&\le CA_1\varepsilon H_N^2-\frac{1}{4}a_2 H_N \sum_{i=1}^N \varepsilon|q_i|^{\lambda+1}- \frac{1}{2}a_4A_2 \sum_{1\le i<j\le N}\frac{\varepsilon^2}{|q_i-q_j|^{2\beta_1-2}} +D.
\end{align*}
We invoke conditions \eqref{cond:U:U(x)=O(x^lambda+1)} and \eqref{cond:G:G<1/|x|^beta} once again to further deduce
\begin{align*}
&\L_N\bigg(A_1H_N^3+\varepsilon H_N\la \qb,\pb\ra-A_2\varepsilon^{2} \sum_{1\le i<j\le N}\Big\la p_i-p_j,\frac{ q_i-q_j}{|q_i-q_j|^{\beta_1-1}}\Big\ra\bigg) \\
&\le CA_1\varepsilon H_N^2 - CH_N^2+D.
\end{align*}
By shrinking $\varepsilon$  to zero if necessary, we establish \eqref{ineq:L_N.H_N^3+H_N<q,p>+<q,p>/|q|^beta}, which completes Case 2.

Turning back to the induction argument for \eqref{ineq:L_N.V_N^n}, it is clear that for $\kappa_N$ large enough (recalling $\beta_1\in(1, 2]$)
\begin{align*}
cH_N^3\le V_N\le CH_N^3+C  .
\end{align*}
From \eqref{ineq:L_N.H_N^3+H_N<q,p>+<q,p>/|q|^beta}, we immediately obtain
\begin{align*}
\L_N V_N \le - c V_N^{\frac{2}{3}}+C.
\end{align*}
In particular, this establishes \eqref{ineq:L_N.V_N^n} for the base case $n=1$.

Next, considering $n\ge 2$, by It\^o's formula, we have the identity
\begin{align*}
\d V_N &= \L_NV_N\d t+ \big(3A_1H_N^2 +  \varepsilon\la \qb,\pb\ra\big)\sum_{i=1}^N  \bigg\la \frac{ \varepsilon p_i}{\sqrt{1+\varepsilon|p_i|^2}},\sqrt{2}\d W_i\bigg\ra\\
&\qquad + \varepsilon H_N \sum_{i=1}^N \la q_i,\sqrt{2}\d W_i\ra - A_2\varepsilon^2 \sum_{i=1}^N \bigg\la \sum_{j\neq i}\frac{q_i-q_j}{|q_i-q_j|^{\beta_1-1}},\sqrt{2}\d W_i\bigg\ra,
\end{align*} 
whence
\begin{align*}
&\L_N V_N^n  = n V_N^{n-1} \L_N V_N \\
&+ \frac{1}{2}n(n-1) V_N^{n-2}\sum_{i=1}^N \bigg| 3A_1 H_N^2+\varepsilon\la \qb,\pb\ra+\varepsilon H_N q_i -A_2\varepsilon^2\sum_{j\neq i}\frac{q_i-q_j}{|q_i-q_j|^{\beta_1-1}}\bigg|^2.
\end{align*}
Since
\begin{align*}
 \bigg| 3A_1 H_N^2+\varepsilon\la \qb,\pb\ra+\varepsilon H_N q_i -A_2\varepsilon^2\sum_{j\neq i}\frac{q_i-q_j}{|q_i-q_j|^{\beta_1-1}}\bigg|^2\le CH_N^4+C\le  CV_N^{4/3},
\end{align*}
we employ the induction hypothesis to arrive at
\begin{align*}
\L_N V_N^n \le -c V_N^{n-\frac{1}{3}} + CV_N^{n-1}+ CV_N^{n-\frac{2}{3}} \le -c V_N^{n-\frac{1}{3}} + C.
\end{align*}
This produces \eqref{ineq:L_N.V_N^n} for the general case $n\ge 2$. The proof is thus finished.

\end{proof}

\subsection{Proof of Theorem \ref{thm:polynomial-mixing:N-particle}} \label{sec:poly-mixing:proof-of-thm}

As mentioned in Section \ref{sec:result:ergodicity}, the proof of Theorem \ref{thm:polynomial-mixing:N-particle} relies on three ingredients including the Lyapunov constructions, the Hormander's condition and the solvability of the control problem associated with \eqref{eqn:rLE:N-particle:epsilon}. The former property was already established in the previous two subsections \ref{sec:poly-mixing:Lyapunov:single-particle} and \ref{sec:poly-mixing:Lyapunov:N-particle}. Turning to the latter two properties, they are presented in the following auxiliary results.

\begin{lemma} \label{lem:Hormander}
System \eqref{eqn:rLE:N-particle:epsilon} satisfies the Hormander's condition as in Definition \ref{def:Hormander}.
\end{lemma}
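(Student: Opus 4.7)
The plan is to verify Hormander's condition by computing only first-order Lie brackets between $X_0$ and the diffusion vector fields; no iterated brackets should be needed, since the noise is already nondegenerate in the $p$-directions and the $p$--$q$ coupling in $X_0$ is smooth and nowhere degenerate. Indeed, the family $X_1,\dots,X_N$ (interpreted as the $Nd$ coordinate fields $\partial_{p_{i,a}}$) already spans all $p$-directions at every $X\in\X$, so it suffices to produce the $q$-directions through brackets with $X_0$.

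For the key computation, observe that the potentials $U$ and $G$ contribute no $p$-derivatives to the coefficients of $X_0$, and that the $p_j$-dependent terms with $j\ne i$ vanish under $\partial_{p_{i,a}}$. Writing $\phi(p):=p/\sqrt{1+\varepsilon|p|^2}$, a direct calculation then yields, for each $i\in\{1,\dots,N\}$ and $a\in\{1,\dots,d\}$,
\begin{align*}
[\partial_{p_{i,a}},X_0]=\sum_{b=1}^d A(p_i)_{ba}\bigl(\partial_{q_{i,b}}-\partial_{p_{i,b}}\bigr),\qquad A(p_i):=\grad_{p_i}\phi(p_i)=\frac{I}{\sqrt{1+\varepsilon|p_i|^2}}-\frac{\varepsilon\,p_i\otimes p_i}{(1+\varepsilon|p_i|^2)^{3/2}}.
\end{align*}
A brief eigenvalue analysis---decomposing $\rbb^d$ into $\rbb p_i$ and its orthogonal complement---gives eigenvalue $(1+\varepsilon|p_i|^2)^{-1/2}$ with multiplicity $d-1$ on $p_i^\perp$ and eigenvalue $(1+\varepsilon|p_i|^2)^{-3/2}$ with multiplicity $1$ on $\rbb p_i$, so $A(p_i)$ is positive definite, hence invertible, at every $p_i\in\rbb^d$.

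Modding out by the $p$-directions already generated by $X_1,\dots,X_N$, the collection $\{[\partial_{p_{i,a}},X_0]\}_{a=1}^d$ spans $A(p_i)\rbb^d=\rbb^d$ inside the $q_i$-block. Summing over $i=1,\dots,N$ then yields every $q$-direction, so the Lie algebra generated by $X_0,X_1,\dots,X_N$ spans $T_X\X$ at every $X\in\X$, verifying Definition~\ref{def:Hormander}.

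The only departure from the classical Langevin case (where $A\equiv I$ and $[\partial_{p_{i,a}},X_0]$ directly returns $\partial_{q_{i,a}}-\partial_{p_{i,a}}$) is establishing the invertibility of the relativistic Jacobian $A(p_i)$, and this reduces to the elementary eigenvalue computation above. I therefore do not anticipate any substantive obstacle for this lemma: the relativistic kinetic term modifies the $p$--$q$ coupling but never degenerates it.
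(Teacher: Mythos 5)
Your proposal is correct and follows essentially the same route as the paper: both verify H\"ormander's condition using only the fields $X_i=\partial_{p_i}$ together with the first-order brackets $[X_i,X_0]$, with the relativistic Jacobian of $p\mapsto p/\sqrt{1+\varepsilon|p|^2}$ supplying the $q$-directions. Your version is in fact the more careful rendering — the paper compresses the matrix-valued bracket into a single scalar coefficient, whereas you correctly exhibit the full Jacobian $A(p_i)$ and establish its invertibility by the eigenvalue decomposition along $\rbb p_i$ and $p_i^\perp$.
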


\begin{lemma} \label{lem:control-problem}
The control problem \eqref{eqn:rLE:N-particle:epsilon:control-problem} satisfies the solvability as stated in Definition \ref{def:control-problem}.
\end{lemma}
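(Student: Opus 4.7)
The plan is to prescribe a smooth position trajectory $(\gamma_1,\dots,\gamma_N)$ in $\D$ and then recover both the momenta $p_i$ and the control paths $U_i$ by algebraically inverting the first equation of \eqref{eqn:rLE:N-particle:epsilon:control-problem} and integrating the second. This is the strategy used in \cite{herzog2017ergodicity} for the classical Langevin dynamics, modified to accommodate the relativistic velocity map.

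Concretely, the map $\Phi_\varepsilon(p) := p/\sqrt{1+\varepsilon|p|^2}$ is a smooth diffeomorphism from $\rbb^d$ onto the open ball $B(0,1/\sqrt\varepsilon)$, with explicit smooth inverse $\Phi_\varepsilon^{-1}(v) = v/\sqrt{1-\varepsilon|v|^2}$. Thus, for any $\gamma_i\in C^2([0,T];\rbb^d)$ satisfying the strict velocity bound $\sup_{t\in[0,T]}|\dot\gamma_i(t)| < 1/\sqrt\varepsilon$, the choice $q_i(t) := \gamma_i(t)$ and $p_i(t) := \Phi_\varepsilon^{-1}(\dot\gamma_i(t))$ automatically solves the first equation, while the second equation forces
\begin{align*}
\sqrt{2}\, \dot U_i(t) = \dot p_i(t) + \Phi_\varepsilon(p_i(t)) + \grad U(\gamma_i(t)) + \sum_{j\neq i} \grad G\bigl(\gamma_i(t) - \gamma_j(t)\bigr).
\end{align*}
Provided $(\gamma_1(t),\dots,\gamma_N(t))\in\D$ throughout, the right-hand side is continuous in $t$, and integration yields a $C^1$ control $U_i$; in other words, the entire solvability question reduces to producing a suitable position path.

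It therefore remains to exhibit $\gamma_i\in C^2([0,T];\rbb^d)$ satisfying the boundary data $\gamma_i(0)=q_{i,0}$, $\dot\gamma_i(0)=\Phi_\varepsilon(p_{i,0})$ (so that $p_i(0)=p_{i,0}$), $\gamma_i(T)$ equal to the $i$-th component of the target configuration, and $\dot\gamma_i(T)=0$ (so that $p_i(T)=0$), subject to the path constraints $\gamma_i(t)\neq\gamma_j(t)$ for $i\neq j$ and $|\dot\gamma_i(t)|<1/\sqrt\varepsilon$ on $[0,T]$. Since $\D$ is path-connected (after, in the case $d=1$, relabeling the targets to match the order type of $\qb_0$), one can join the two configurations by a smooth non-colliding curve meeting the prescribed endpoint velocities, for example by starting with a Hermite interpolation and then locally perturbing it near any would-be collision. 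By choosing $T$ large and composing with a slow time reparameterization that leaves the endpoint values and derivatives unchanged, the strict velocity bound $|\dot\gamma_i(t)|<1/\sqrt\varepsilon$ can be enforced on the interior of $[0,T]$. The simultaneous enforcement of the non-collision property and the velocity bound is the main technical point, but both are handled by the flexibility in choosing $T$ arbitrarily large together with local perturbations of the interpolant. The resulting $Z(t)=(\qb(t),\pb(t))$ satisfies $Z(0)=X_0$ and $Z(T)=(e_1,0)$, completing the proof.
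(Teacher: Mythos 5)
Your proposal is correct and follows essentially the same route as the paper: both arguments reduce the problem to constructing a smooth non-colliding position path with speed strictly below $1/\sqrt{\varepsilon}$ that matches the prescribed endpoint positions and velocities, then recover $p_i$ via the inverse relativistic velocity map $v\mapsto v/\sqrt{1-\varepsilon|v|^2}$ and the control $U_i$ by integrating the momentum equation. The paper merely makes the path construction explicit (a short initial segment aligned with $p_{i,0}/\sqrt{1+\varepsilon|p_{i,0}|^2}$ glued by cut-offs to a slow connecting path, with a case-by-case verification of the velocity bound) where you invoke interpolation and time reparameterization.
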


For the sake of clarity, the proofs of Lemma \ref{lem:Hormander} and Lemma \ref{lem:control-problem} are deferred by a few lines. Assuming the above results, let us conclude Theorem \ref{thm:polynomial-mixing:N-particle} by verifying the hypothesis of \cite[Theorem 3.5]{hairer2009hot}. Since the argument is short, we include it here for completeness.

\begin{proof}[Proof of Theorem \ref{thm:polynomial-mixing:N-particle}]
First of all, the function $V$ constructed in Lemma \ref{lem:Lyapunov:single-particle} and Lemma \ref{lem:Lyapunov:N-particle} provide the Lyapunov condition required in \cite[Theorem 3.5]{hairer2009hot}. Also, the results of Lemma \ref{lem:Hormander} and Lemma \ref{lem:control-problem} respectively verify \cite[Assumption 2]{hairer2009hot} and \cite[Assumption 3]{hairer2009hot}. Altogether, we invoke \cite[Theorem 3.5]{hairer2009hot} to deduce the uniqueness of the invariant measure $\pi_N^\varepsilon$ as well as the polynomial convergent rate \eqref{ineq:polynomial-ergodicity}, as claimed.

\end{proof}

Turning to the auxiliary result in Lemma \ref{lem:Hormander}, we employ the same argument as in \cite[Page 1639]{ottobre2011asymptotic} to establish the Hormander's condition.

\begin{proof}[Proof of Lemma \ref{lem:Hormander}]
Recalling the family of vector fields $\{X_i\}_{i=0}^N$ given by \eqref{form:X_i}, observe that for $i=1,\dots,N$
\begin{align*}
[X_i,X_0] = \frac{\partial_{q_i}}{\sqrt{1+\varepsilon|p_i|^2}}-\frac{\varepsilon p_i\la p_i,\partial_{q_i}\ra}{(1+\varepsilon|p_i|^2)^{3/2}}-\frac{\partial_{p_i}}{\sqrt{1+\varepsilon|p_i|^2}}+\frac{\varepsilon p_i\la p_i,\partial_{p_i}\ra}{(1+\varepsilon|p_i|^2)^{3/2}}.
\end{align*}
We multiply both sides of the above equation with $p_i$ to produce the identity
\begin{align*}
    \frac{\la p_i,\partial_{q_i}\ra}{(1+\varepsilon|p_i|^2)^{3/2}} = \la p_i,[X_0,X_i]\ra +\frac{\la p_i,\partial_{p_i}\ra}{(1+\varepsilon|p_i|^2)^{3/2}}.
\end{align*}
It follows that 
\begin{align*}
    \frac{\partial_{q_i}}{\sqrt{1+\varepsilon|p_i|^2}}& = [X_i,X_0]+\frac{\partial_{p_i}}{\sqrt{1+\varepsilon|p_i|^2}}-\frac{\varepsilon p_i\la p_i,\partial_{p_i}\ra}{(1+\varepsilon|p_i|^2)^{3/2}}\\
    &\qquad+\varepsilon p_i\Big( \la p_i,[X_0,X_i]\ra +\frac{\la p_i,\partial_{p_i}\ra}{(1+\varepsilon|p_i|^2)^{3/2}} \Big).
\end{align*}
It follows that the collection $\{X_i\}_{i=1}^N\cup\{[X_i,X_0]\}_{i=1}^N$ has maximal rank at each point $X$ in $\X$. In other words, \eqref{eqn:rLE:N-particle:epsilon} satisfies the Hormander's condition, as claimed.

\end{proof}

Lastly, we provide the proof of Lemma \ref{lem:control-problem}, whose argument is adapted to the proof of \cite[Proposition 2.5]{herzog2017ergodicity} tailored to our settings, ensuring the solvability of the control problem \eqref{eqn:rLE:N-particle:epsilon:control-problem}. This together with Lemma \ref{lem:Hormander} and the Lyapunov constructions in Subsections \ref{sec:poly-mixing:Lyapunov:single-particle}-\ref{sec:poly-mixing:Lyapunov:N-particle} ultimately conclude Theorem \ref{thm:polynomial-mixing:N-particle}.

\begin{proof}[Proof of Lemma \ref{lem:control-problem}]

Let $X_j=(\qb_j,\pb_j) = (q_{j,1},\dots,q_{j,N})\times (p_{j,1},\dots,p_{j,N})\in \X=\D\times (\rbb^d)^N$, $j=0,1$, be given. Since $\D$ is path connected, there exists a time $T=T(\qb_0,\qb_1)$ sufficiently large and a path $\f_1=(\f_{1,i})_{i=1}^N\in C^\infty([0,T];\D)$ such that 
\begin{align*}
   \f_{1,i}(0)=q_{0,i},\quad \f_{1,i}(T)=q_{1,i} ,
\end{align*}
and that
\begin{align*}
\sup_{s\in[0,T]}|\f_{1,i} '(s)|\le \frac{1}{100},\quad i=1,\dots,N.
\end{align*}
Letting $\rho\in(0,1)$ be given and be chosen later, we introduce the function $g\in C^2([0,2\rho];\rbb)$ defined as
 \begin{align*}
g(s)= \begin{cases}
-\frac{(s-\rho)^{100}}{100\rho^{99}}+\frac{\rho}{100},& 0\le s\le \rho,\\
\frac{\rho}{100},& \rho\le s\le 2\rho.
\end{cases}
\end{align*}
Then, we consider the functions $\f_2:[0,2\rho]\to (\rbb^d)^N$ and $\f_3:[T-2\rho,T]\to (\rbb^d)^N$ respectively given by
\begin{align*}
\f_{2,i}(s)=q_{i,0}+g(s)\frac{p_{i,0}}{\sqrt{1+\varepsilon|p_{i,0}|^2}},
\end{align*}
and
\begin{align*}
\f_{3,i}(s)=q_{i,1}+g(T-s)\frac{p_{i,1}}{\sqrt{1+\varepsilon|p_{i,1}|^2}}.
\end{align*}
To produce a single path $Z=(\qb,\pb)$ satisfying the control problem \eqref{eqn:rLE:N-particle:epsilon:control-problem}, consider $\theta:\rbb\to\rbb$ a smooth cut-off function satisfying
\begin{align*} 
\theta(t) = \begin{cases} 
1,&  |t|\le 1,\\
\text{monotonicity},& 1\le |t|\le 2,\\
0,& |t|\ge 2,
\end{cases}
\end{align*}
and that 
\begin{align*}
\sup_{s\in\rbb}|\theta'(s)|\le 10.
\end{align*}
Given the above functions, a solution candidate $Z$ to the control problem \eqref{eqn:rLE:N-particle:epsilon:control-problem} is given by
\begin{align} 
q_i(s)&=\begin{cases} \f_{2,i}(s),& 0\le s\le \rho,\\
\theta\big(\frac{s}{\rho}\big)\f_{2,i}(s)+\Big[1-\theta\big(\frac{s}{\rho}\big)\Big]\f_{1,i}(s),& \rho\le s\le 2\rho,\\
\f_{1,i}(s),& 2\rho\le s\le T-2\rho,\\
\big[1-\theta\big(\frac{T-s}{\rho}\big)\big]\f_{1,i}(s)+\theta\big(\frac{T-s}{\rho}\big)\f_{3,i},& T-2\rho\le s\le T-\rho,\\
\f_{3,i},& T-\rho\le s\le T,
\end{cases}\label{form:control-problem:q_i.p_i}\\
\textup{and}\quad p_i(s)& = \frac{q_i'(s)}{\sqrt{1-\varepsilon|q_i'(s)|^2}},\quad 0\le s\le T.\notag
\end{align}

To see this, we first note that by taking $\rho$ sufficiently small, $\qb(s)$, $s\in[0,T]$, is guaranteed to remain in $\D$. Also, it is clear that
\begin{align*}
q_i(0)=q_{0,i},\quad q_{i}(T) = e_1,\quad p_i(0)=p_{0,i},\quad p_i(T)=p_{1,i}.
\end{align*}
Next, we claim that for all $\varepsilon<1/100$
\begin{align} \label{ineq:control-problem:sup_[0,T]]|q'|}
\sup_{s\in[0,T]}|q_i'(s)| < \frac{1}{\sqrt{\varepsilon}}.
\end{align}
Indeed, for each $s\in[0,\rho]$, by the choice of $g$,
\begin{align*}
|q_i'(s)| = |\f_{2,i}'(s)| \le |g'(s)|\frac{|p_{i,0}|}{\sqrt{1+\varepsilon|p_{i,0}|^2}} < \frac{1}{\sqrt{\varepsilon}}.
\end{align*}
Likewise, for each $s\in[T-\rho,T]$,
\begin{align*}
|q_i'(s)| = |\f_{3,i}'(s)| \le |g'(T-s)|\frac{|p_{i,1}|}{\sqrt{1+\varepsilon|p_{i,1}|^2}} < \frac{1}{\sqrt{\varepsilon}}.
\end{align*}
For $s\in[2\rho,T-2\rho]$,
\begin{align*}
|q_i'(s)|  = |\f_{1,i}'(s)|\le 1<\frac{1}{\sqrt{\varepsilon}},
\end{align*}
Concerning the interval $[\rho,2\rho]$, a calculation gives
\begin{align*}
q_i'(s)= \theta'\Big(\frac{s}{\rho}\Big) \frac{1}{\rho}\big[\f_{2,i}(s)-\f_{1,i}(s)\big]+\theta\Big(\frac{s}{\rho}\Big)\f_{2,i}'(s)+\Big[1-\theta\Big(\frac{s}{\rho}\Big)\Big]\f_{1,i}'(s).
\end{align*}
Since for $s\in[\rho,2\rho]$, $\f_{2,i}'(s)=0$, it is clear that
\begin{align*}
\theta\Big(\frac{s}{\rho}\Big)|\f_{2,i}'(s)|+\Big[1-\theta\Big(\frac{s}{\rho}\Big)\Big]|\f_{1,i}'(s)|\le \frac{1}{100},
\end{align*}
whereas (recalling $|\theta'|\le 10$)
\begin{align*}
\Big|\theta'\Big(\frac{s}{\rho}\Big) \frac{1}{\rho}\big[\f_{2,i}(s)-\f_{1,i}(s)\big]\Big|& \le \frac{10}{\rho}\big[|\f_{2,i}(s)-q_{0,i}|+|\f_{1,i}(s)-q_{0,i}|\big]\\
&\le \frac{1}{10}\cdot\frac{|p_{0,i}|}{\sqrt{1+\varepsilon|p_{0,i}|^2}}+10\cdot\frac{s}{\rho}\cdot \sup_{r\in[0,T]}|\f_{1,i}'(r)| \\
&\le \frac{1}{10}\cdot\frac{1}{\sqrt{\varepsilon}}+\frac{1}{5}.
\end{align*}
It follows that
\begin{align*}
\sup_{s\in[\rho,2\rho]}|q_i'(s)|\le \frac{1}{10}\cdot\frac{1}{\sqrt{\varepsilon}}+\frac{1}{5}+\frac{1}{100}<\frac{1}{\sqrt{\varepsilon}}.
\end{align*}
Similarly, for $s\in[T-2\rho,T-\rho]$, 
\begin{align*}
&|q_i'(s)| \\
&\le \Big|\theta'\Big(\frac{T-s}{\rho}\Big) \frac{1}{\rho}\big[\f_{1,i}(s)-q_{1,i}\big]\Big|+\Big|\theta'\Big(\frac{T-s}{\rho}\Big) \frac{1}{\rho}\big[\f_{3,i}(s)-q_{1,i}\big]\Big|+\Big[1-\theta\Big(\frac{T-s}{\rho}\Big)\Big]|\f_{1,i}'(s)|\\
&\le 10\frac{ T-s}{\rho}\sup_{r\in[0,T]}|\f_{1,i}'(r)|+\frac{1}{10}\cdot\frac{|p_{1,i}|}{\sqrt{1+\varepsilon|p_{1,i}|^2}}+\Big[1-\theta\Big(\frac{T-s}{\rho}\Big)\Big]\sup_{r\in[0,T]}|\f_{1,i}'(r)|\\
&\le \frac{1}{5}+\frac{1}{10\sqrt{\varepsilon}}+\frac{1}{100}<\frac{1}{\sqrt{\varepsilon}}.
\end{align*}
Altogether, we establish \eqref{ineq:control-problem:sup_[0,T]]|q'|}, as claimed. 

As a consequence of \eqref{ineq:control-problem:sup_[0,T]]|q'|}, $p_i(\cdot)$ defined in \eqref{form:control-problem:q_i.p_i} is a well-defined path in $\rbb^d$. Lastly, to verify $\{q_i(\cdot),p_i(\cdot)\}_{i=1}^N$ solves the control problem \eqref{eqn:rLE:N-particle:epsilon:control-problem}, we pick
\begin{align*}
 U_i(t)=\frac{1}{\sqrt{2}}\int_0^t \Big[\frac{p_i(s)}{\sqrt{1+\varepsilon|p_i(t)|^2}} +\grad U(q_i(t))+ \sum_{j\neq i }\grad G(q_i(t)-q_j(t)) +p_i'(t) \Big]\d t.
\end{align*}
This together with the identity of $p_i$ given by \eqref{form:control-problem:q_i.p_i} establishes that the family of paths $$\{q_i(t),p_i(t),U_i(t)\}_{i=1}^N,\quad t\in[0,T],$$ solves \eqref{eqn:rLE:N-particle:epsilon:control-problem}. The proof is thus finished.

\end{proof}

\section{Newtonian limit of \eqref{eqn:rLE:N-particle:epsilon}} \label{sec:newtonian-limit}

In this section, we establish Theorem \ref{thm:newtonian-limit} giving the  validity of the Newtonian limit $\varepsilon\to0$ for \eqref{eqn:rLE:N-particle:epsilon} toward the classical Langevin equation \eqref{eqn:LE:original} on any finite time window. Due to the complication caused by the relativistic forcing together with the singularities, we have to treat two cases $N\ge 2$ and $N=1$ separately. The main argument of Theorem \ref{thm:newtonian-limit} essentially consists of three steps as follows: 

Step 1: We assume that the nonlinearites in \eqref{eqn:rLE:N-particle:epsilon} and \eqref{eqn:LE:original} are Lipschitz function, and established a convergence in $L^p$ as $\varepsilon\to 0$. This is demonstrated in Proposition \ref{prop:newtonian-limit:Lipschitz} below.

Step 2: When $N\ge 2$, we remove the Lipschitz constraint in Step 1 by performing an analysis on the relation between \eqref{eqn:rLE:N-particle:epsilon} and \eqref{eqn:LE:original} while making use of moment bounds on \eqref{eqn:LE:original} on finite time windows. In particular, this produces the convergence in probability as $\varepsilon\to 0$. The explicit argument is carried out in Section \ref{sec:poly-mixing:Lyapunov:N-particle} where we supply the proof of Theorem \ref{thm:newtonian-limit}, part 1.

Step 3: When $N=1$, we provide a uniform moment bound on the $\varepsilon$-system \eqref{eqn:rLE:N-particle:epsilon} through suitable Lyapunov functions. In turn, this allows us to bypass the Lipschitz hypothesis from Step 1 and deduce a convergence in $L^p$, thereby concluding the proof of Theorem \ref{thm:newtonian-limit}, part 2. All of this will be presented in Section \ref{sec:newtonian-limit:proof-of-thm:part-2}.

\subsection{Newtonian limit for Lipschitz systems} \label{sec:newtornian-limit:Lipschitz}
In this subsection, we establish an analogue of Theorem \ref{thm:newtonian-limit} under the extra hypothesis that the nonlinear potentials are Lipschitz functions. For notational convenience, letting $K_i:\D\to\rbb^d$ be Lipschitz, consider the following relativistic system
\begin{align} \label{eqn:rLE:N-particle:epsilon:Lipschitz}
\d\, q_i(t) &= \frac{p_i(t)}{\sqrt{1+\varepsilon |p_i(t)|^2}}\d t, \notag\\
\d\, p_i(t) & = -\frac{p_i(t)}{\sqrt{1+\varepsilon |p_i(t)|^2}} \d t +K_i(\qb(t)) \d t +\sqrt{2} \,\d W_{i}(t).
\end{align}
and the corresponding Langevin system
\begin{align} \label{eqn:LE:N-particle:Lipschitz}
\d\, q_i(t) &= p_i(t)\d t, \notag\\
\d\, p_i(t) & = -p_i(t)\d t +K_i(\qb(t)) \d t +\sqrt{2} \,\d W_{i}(t).
\end{align}
We claim that \eqref{eqn:rLE:N-particle:epsilon:Lipschitz} are well-approximated by \eqref{eqn:LE:N-particle:Lipschitz} in the Newtonian regime $\varepsilon\to 0$. This is summarized in the following proposition.

\begin{proposition} \label{prop:newtonian-limit:Lipschitz}
Suppose that the functions $K_i$ as in \eqref{eqn:rLE:N-particle:epsilon:Lipschitz}-\eqref{eqn:LE:N-particle:Lipschitz} are Lipschitz in $\D$. For every initial condition $(\qb_0,\pb_0)\in \X$, let $X_\varepsilon^K(t)=\big(\qb^{\varepsilon,K}(t),\pb^{\varepsilon,K}(t)\big)$ and $X^K(t)=\big(\qb^K(t),\pb^K(t)\big)$, respectively, be the solutions of \eqref{eqn:rLE:N-particle:epsilon:Lipschitz} and \eqref{eqn:LE:N-particle:Lipschitz}. Then, for all $T>0$ and $n\ge 1$, the following holds
\begin{align} \label{lim:newtonian-limit:Lipschitz}
\E \sup_{t\in[0,T]}\big[ |\qb^{\varepsilon,K}(t)-\qb^K(t)|^{ 2n }+|\pb^{\varepsilon,K}(t)-\pb^K(t)|^{ 2n }\big] \le \varepsilon^n C,
\end{align}
for some positive constant $C=C(n,K,T,\qb_0,\pb_0,N)$ independent of $\varepsilon$.
\end{proposition}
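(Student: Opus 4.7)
The plan is to exploit the fact that \eqref{eqn:rLE:N-particle:epsilon:Lipschitz} and \eqref{eqn:LE:N-particle:Lipschitz} are driven by the \emph{same} Brownian motions $\{W_i\}_{i=1}^N$, so that the differences $\Delta q_i := q_i^{\varepsilon,K} - q_i^K$ and $\Delta p_i := p_i^{\varepsilon,K} - p_i^K$ satisfy a random ODE in which the stochastic integrals cancel exactly. Abbreviating $g_\varepsilon(p) := p/\sqrt{1+\varepsilon |p|^2}$ and decomposing $g_\varepsilon(p^{\varepsilon,K}) - p^K = \bigl(g_\varepsilon(p^{\varepsilon,K}) - p^{\varepsilon,K}\bigr) + \Delta p$, one obtains the pathwise system
\begin{align*}
\tfrac{d}{dt}\Delta q_i &= \bigl(g_\varepsilon(p_i^{\varepsilon,K}) - p_i^{\varepsilon,K}\bigr) + \Delta p_i, \\
\tfrac{d}{dt}\Delta p_i &= -\bigl(g_\varepsilon(p_i^{\varepsilon,K}) - p_i^{\varepsilon,K}\bigr) - \Delta p_i + \bigl(K_i(\qb^{\varepsilon,K}) - K_i(\qb^K)\bigr).
\end{align*}
The elementary identity $|g_\varepsilon(p) - p| = |p|\,\bigl|1 - (1+\varepsilon|p|^2)^{-1/2}\bigr| \le \tfrac{\varepsilon}{2}|p|^3$ will capture all of the $\varepsilon$-dependence in the argument.

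Integrating the two displayed ODEs and invoking the Lipschitz hypothesis on each $K_i$ with constant $L$, I would derive the pathwise bound
\begin{align*}
|\Delta q(t)| + |\Delta p(t)| \le 2 \int_0^t \bigl|g_\varepsilon(\pb^{\varepsilon,K}(s)) - \pb^{\varepsilon,K}(s)\bigr|\,ds + C(L) \int_0^t \bigl(|\Delta q(s)| + |\Delta p(s)|\bigr)\,ds,
\end{align*}
to which Gronwall's lemma applies, yielding
\begin{align*}
\sup_{t \in [0,T]}\bigl(|\Delta q(t)| + |\Delta p(t)|\bigr) \le C(L,T) \int_0^T \bigl|g_\varepsilon(\pb^{\varepsilon,K}(s)) - \pb^{\varepsilon,K}(s)\bigr|\,ds.
\end{align*}
Raising to the $n$-th power, applying Jensen's inequality to the time integral, taking expectations, and plugging in the cubic pointwise estimate reduce the claim to the single bound $\sup_{\varepsilon \in (0,1)}\sup_{s \in [0,T]} \E |\pb^{\varepsilon,K}(s)|^{3n} < \infty$.

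This last moment estimate is the only place where one must be careful about uniformity in $\varepsilon$. Since $|g_\varepsilon(p)| \le |p|$ for every $\varepsilon > 0$ and since the Lipschitz hypothesis gives $|K_i(\qb)| \le L(1+|\qb|)$, a standard Gronwall-and-BDG argument applied to $\sup_{s \in [0,t]}\bigl(|\pb^{\varepsilon,K}(s)| + |\qb^{\varepsilon,K}(s)|\bigr)$ produces moment bounds of arbitrary order that are independent of $\varepsilon \in (0,1)$ and depend only on $T$, $L$, and the initial data $(\qb_0, \pb_0)$. Combining this with the reduction in the previous paragraph delivers the desired rate $\varepsilon^n$. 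I do not anticipate any genuine obstacle: the crucial simplification is that the shared Brownian motion cancels and the entire analysis of the difference is a deterministic ODE comparison, so that no Itô calculus or martingale estimates on $\Delta p$ itself are required; the Lipschitz hypothesis (as opposed to the singular setting of Section~\ref{sec:newtonian-limit}) is precisely what permits this purely Gronwall-based approach.
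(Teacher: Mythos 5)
Your proposal is correct and follows essentially the same route as the paper: cancel the common Brownian motions, run a pathwise Gronwall argument on the difference, and reduce everything to a uniform-in-$\varepsilon$ moment bound on $\pb^{\varepsilon,K}$ (the paper's Lemma \ref{lem:E.sup.|p_epsilon^K|^n<C}, which it proves via the exponential martingale inequality rather than your BDG route; both are standard and valid here since the truncated drift has linear growth uniformly in $\varepsilon$). One point in your favor: you use the sharper elementary bound $|p/\sqrt{1+\varepsilon|p|^2}-p|\le \tfrac{\varepsilon}{2}|p|^3$, which delivers the stated rate $\varepsilon^n$ at the cost of needing moments of order $3n$, whereas the paper's estimate $\sqrt{\varepsilon}\,|p|^2$ combined with its Gronwall on $|\hat\qb|^2+|\hat\pb|^2$ yields $\E\sup|(\hat\qb,\hat\pb)|^{2n}\le C\varepsilon^n$, i.e.\ only $\varepsilon^{n/2}$ per unit power of the difference — so your computation actually matches the displayed conclusion \eqref{lim:newtonian-limit:Lipschitz} more cleanly.
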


In order to establish Proposition \ref{prop:newtonian-limit:Lipschitz}, we will employ the following auxiliary result providing a uniform moment bound on \eqref{eqn:rLE:N-particle:epsilon:Lipschitz} with respect to the parameter $\varepsilon$.

\begin{lemma} \label{lem:E.sup.|p_epsilon^K|^n<C}
Under the same hypothesis of Proposition \ref{prop:newtonian-limit:Lipschitz}, for every initial condition $(\qb_0,\pb_0)\in \X$, let $X^{\varepsilon,K}(t)=\big(\qb^{\varepsilon,K}(t),\pb^{\varepsilon,K}(t)\big)$ be the solution of \eqref{eqn:rLE:N-particle:epsilon:Lipschitz}. Then, for all $T>0$ and $n\ge 1$,
\begin{align} \label{ineq:E.sup.|p_epsilon^K|^n<C}
\E \sup_{t\in[0,T]}|\pb^{\varepsilon,K}(t)|^n \le C,
\end{align}
for some positive constant $C=C(n,T,\qb_0,\pb_0)$ independent of $\varepsilon$.
\end{lemma}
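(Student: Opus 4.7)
The plan is to exploit the uniform bound $|p/\sqrt{1+\varepsilon|p|^2}|\le|p|$, valid for every $\varepsilon>0$, which shows that the relativistic drift in \eqref{eqn:rLE:N-particle:epsilon:Lipschitz} grows at most linearly in $|p|$ \emph{uniformly} in $\varepsilon$, and to combine this with the linear growth of $K_i$ (inherited from the Lipschitz hypothesis, so that $|K_i(\qb)|\le C(1+|\qb|)$ with $C$ independent of $\varepsilon$) in a pathwise Gronwall argument.

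First I would write \eqref{eqn:rLE:N-particle:epsilon:Lipschitz} in integrated form, estimate the relativistic drift by $|p_i^{\varepsilon,K}|$, bound the forcing by $C(1+|\qb^{\varepsilon,K}|)$, and take the pathwise supremum on $[0,t]$ to obtain, for each $i$,
\begin{align*}
\sup_{r\le t}|p_i^{\varepsilon,K}(r)| &\le |p_{i,0}|+\int_0^t\sup_{r\le s}|p_i^{\varepsilon,K}(r)|\,\d s+C\int_0^t\bigl(1+\sup_{r\le s}|\qb^{\varepsilon,K}(r)|\bigr)\d s+\sqrt{2}\sup_{r\le t}|W_i(r)|,\\
\sup_{r\le t}|q_i^{\varepsilon,K}(r)| &\le |q_{i,0}|+\int_0^t\sup_{r\le s}|p_i^{\varepsilon,K}(r)|\,\d s,
\end{align*}
where the second inequality also uses $|p/\sqrt{1+\varepsilon|p|^2}|\le|p|$ for the velocity field.

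Second, setting $\Phi(t):=\sum_{i=1}^{N}\sup_{r\le t}\bigl(|q_i^{\varepsilon,K}(r)|+|p_i^{\varepsilon,K}(r)|\bigr)$ and summing the two inequalities over $i$, I arrive at
\[
\Phi(t)\le C_0(\qb_0,\pb_0,T,K)+C_1\int_0^t\Phi(s)\,\d s+\sqrt{2}\sum_{i=1}^{N}\sup_{r\le t}|W_i(r)|,
\]
with constants $C_0,C_1$ independent of $\varepsilon$. Gronwall's inequality then yields
\[
\Phi(T)\le e^{C_1 T}\Bigl(C_0+\sqrt{2}\sum_{i=1}^{N}\sup_{r\le T}|W_i(r)|\Bigr),
\]
after which raising to the $n$-th power, applying Minkowski, and invoking the classical Brownian moment bound $\E\sup_{r\le T}|W_i(r)|^n<\infty$ (via Doob's maximal inequality) produces the claimed $\varepsilon$-uniform estimate \eqref{ineq:E.sup.|p_epsilon^K|^n<C}.

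There is no serious conceptual obstacle; the only technicality is justifying the pathwise Gronwall step when the bound for $\Phi$ contains stochastic terms. I would handle this routinely by introducing the stopping times $\tau_R:=\inf\{t\ge 0:|\qb^{\varepsilon,K}(t)|+|\pb^{\varepsilon,K}(t)|\ge R\}$, running the argument on $[0,T\wedge\tau_R]$ where all quantities are bounded, taking expectations, and letting $R\to\infty$ using Fatou's lemma; the global solvability needed to make the stopping-time argument work is guaranteed by the Lipschitz hypothesis on $K_i$ together with the smooth, linearly bounded relativistic drift.
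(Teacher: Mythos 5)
Your proof is correct, but it takes a genuinely different route from the paper. The paper applies It\^o's formula to $|q_i^\varepsilon|^2+|p_i^\varepsilon|^2$, uses the inequality $1+|p|^2/\sqrt{1+\varepsilon|p|^2}\ge|p|$ to extract dissipation from the relativistic drag, and then controls the resulting stochastic integral $\int\la p_i^\varepsilon,\d W_i\ra$ via the exponential martingale inequality, absorbing its quadratic variation $\tfrac12\int|p_i^\varepsilon|^2\,\d s$ into the $-|p_i^\varepsilon|^2$ drift term before closing with Gronwall. You instead work directly with the integrated SDE, where the additivity of the noise means the martingale term is literally $\sqrt{2}\,W_i(t)$, so a pathwise Gronwall bound reduces everything to $\E\sup_{[0,T]}|W_i|^n<\infty$. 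Your key substitute for the paper's inequality is the elementary bound $|p/\sqrt{1+\varepsilon|p|^2}|\le|p|$, which is uniform in $\varepsilon$, and you never need the dissipative sign of the drag term. Your argument is more elementary and even yields exponential moments of $\Phi(T)$ for free; its limitation is that it hinges on the noise being additive, whereas the paper's It\^o-plus-exponential-martingale scheme is the template reused later (Lemmas \ref{lem:LE:N-particle:moment-estimate} and \ref{lem:rLE:single-particle:moment-estimate:sup_[0,T]}) where the relevant martingale integrands do depend on the solution. One small remark: the stopping-time localization you add at the end is not actually needed, since your Gronwall step is purely deterministic and applies $\omega$-by-$\omega$ to the continuous, a.s.\ finite function $\Phi$; global solvability of the Lipschitz system already guarantees this.
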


For the sake of clarity, we defer the proof of Lemma \ref{lem:E.sup.|p_epsilon^K|^n<C} to the end of this subsection. Assuming the result of Lemma \ref{lem:E.sup.|p_epsilon^K|^n<C}, let us prove Proposition \ref{prop:newtonian-limit:Lipschitz}.
\begin{proof}[Proof of Proposition \ref{prop:newtonian-limit:Lipschitz}]  For notational convenience, we will drop the superscript $K$ in $(\qb^{\varepsilon,K},\pb^{\varepsilon,K})$ and $(\qb^K,\pb^K)$ through out the proofs in this subsection.

Setting $\qhat_i = q_i^{\varepsilon}-q_i$, $\phat_i=p_i^{\varepsilon}-p_i$, and subtracting \eqref{eqn:rLE:N-particle:epsilon:Lipschitz} from \eqref{eqn:LE:N-particle:Lipschitz}, observe that $(\qhat_i(t),\phat_i(t))$ satisfies the following equation 
\begin{align*} 
\frac{\d}{\d t} \qhat_i &= \phat_i+ \frac{p_i^{\varepsilon}}{\sqrt{1+\varepsilon |p_i^\varepsilon|^2}}-p_i^\varepsilon, \notag\\
\frac{\d}{\d t} \phat_i & =-\phat_i -\frac{p_i^\varepsilon}{\sqrt{1+\varepsilon |p_i^\varepsilon|^2}}+p_i^\varepsilon  +K_i(\qb^\varepsilon)-K_i(\qb) ,
\end{align*}
with initial condition $\qhat_i(0)=\phat_i(0)=0$.
A routine calculation gives
\begin{align*}
\frac{1}{2}\frac{\d}{\d t}\big(|\qhat_i|^2+|\phat_i|^2\big)& = \la \qhat_i,\phat_i\ra-|\phat_i|^2+\Big\la \qhat_i,\frac{p_i^{\varepsilon}}{\sqrt{1+\varepsilon |p_i^\varepsilon|^2}}-p_i^\varepsilon\Big\ra\\
&\qquad -\Big\la \phat_i,\frac{p_i^{\varepsilon}}{\sqrt{1+\varepsilon |p_i^\varepsilon|^2}}-p_i^\varepsilon \Big\ra +\la \phat_i,K_i(\qb^\varepsilon)-K_i(\qb)\ra.
\end{align*}
Note that for all $p\in\rbb^d$, we have
\begin{align*}
\Big|\frac{p}{\sqrt{1+\varepsilon|p|^2}}-p\Big| & = \frac{\varepsilon|p|^3}{ \sqrt{1+\varepsilon|p|^2}   (1+\sqrt{1+\varepsilon|p|^2})}\le \sqrt{\varepsilon}|p|^2.
\end{align*}
Together with the fact that $K_i$ is Lipschitz, we employ Young's inequality to infer
\begin{align*}
\frac{1}{2}\frac{\d}{\d t}\big(|\qhat_i|^2+|\phat_i|^2\big)&\le C\big(|\hat{\qb}|^2+|\phat_i|^2+\varepsilon|p_i^\varepsilon|^4\big) .
\end{align*}
In other words, summing over $i=1,\dots,N$ yields
\begin{align*}
\frac{1}{2}\frac{\d}{\d t}\big(|\hat{\qb}|^2+|\hat{\pb}|^2\big)&\le C\big(|\hat{\qb}|^2+|\hat{\pb}|^2+\varepsilon|\pb^\varepsilon|^4\big).
\end{align*}
In the above, $C$ is a positive constant independent of $\varepsilon$. As a consequence, for all $n\ge 1$ and $T>0$, Gronwall's inequality implies the bound
\begin{align*}
\E \sup_{t\in[0,T]} |(\hat{\qb}(t),\hat{\pb}(t))|^{2n}\le C \varepsilon^n\E\sup_{t\in[0,T]} |\pb^\varepsilon(t)|^{4n}.
\end{align*}
In light of Lemma \ref{lem:E.sup.|p_epsilon^K|^n<C}, cf. \eqref{ineq:E.sup.|p_epsilon^K|^n<C}, the above estimate immediately produces \eqref{lim:newtonian-limit:Lipschitz}, as claimed.

\end{proof}

We now turn to Lemma \ref{lem:E.sup.|p_epsilon^K|^n<C}, whose result was employed to conclude Proposition \ref{prop:newtonian-limit:Lipschitz}.

\begin{proof}[Proof of Lemma \ref{lem:E.sup.|p_epsilon^K|^n<C}]
By It\^o's formula applied to \eqref{eqn:rLE:N-particle:epsilon:Lipschitz}, we have
\begin{align} \label{eqn:d.|q^(epsilon,K)|^2+|p^(epsilon,K)|^2}
\frac{1}{2}\d\big( |q_i^\varepsilon|^2+|p_i^\varepsilon|^2\big) & =\bigg[ \frac{\la q_i^\varepsilon,p_i^\varepsilon\ra }{\sqrt{1+\varepsilon|p_i^\varepsilon|^2}}-  \frac{|p_i^\varepsilon|^2}{\sqrt{1+\varepsilon|p_i^\varepsilon|^2}}+\la p_i^\varepsilon,K_i(\qb^\varepsilon)\ra+d\bigg]\d t+\sqrt{2}\la p_i^\varepsilon,\d W_i\ra.
\end{align}
We note that the following holds
\begin{align} \label{ineq:1+p^2/sqrt(1+epsilon.p^2)>p}
1+\frac{|p|^2}{\sqrt{1+\varepsilon|p|^2}}\ge |p|,\quad \varepsilon\in[0,1],\, p\in\rbb^d.
\end{align}
Indeed, \eqref{ineq:1+p^2/sqrt(1+epsilon.p^2)>p} is equivalent to
\begin{align*}
1+\varepsilon|p|^2 + 2|p|^2\sqrt{1+\varepsilon|p|^2}+|p|^4\ge |p|^2+\varepsilon|p|^4,
\end{align*}
which always holds true for all $\varepsilon\in[0,1]$ and $p\in\rbb^d$. Together with the hypothesis that $K_i$ is Lipschitz, from \eqref{eqn:d.|q^(epsilon,K)|^2+|p^(epsilon,K)|^2}, we infer the bound
\begin{align} \label{ineq:d.|q^(epsilon,K)|^2+|p^(epsilon,K)|^2}
\frac{1}{2}\d\big( |q_i^\varepsilon|^2+|p_i^\varepsilon|^2\big) &\le  C(|\qb^\varepsilon|^2+|\pb^\varepsilon|^2+1)\d t -|p_i^\varepsilon|^2\d t+ \sqrt{2}\la p_i^\varepsilon,\d W_i\ra,
\end{align}
for some positive constant $C=C(K_i)$ independent of $\varepsilon$. 

Next, for $n\ge 1$, we apply It\^o's formula once again to obtain
\begin{align*} 
\d\big( |q_i^\varepsilon|^2+|p_i^\varepsilon|^2\big)^n & =  n\big( |q_i^\varepsilon|^2+|p_i^\varepsilon|^2\big)^{n-1}\d \big( |q_i^\varepsilon|^2+|p_i^\varepsilon|^2\big) \\
&\qquad+ \frac{1}{2}n(n-1)\big( |q_i^\varepsilon|^2+|p_i^\varepsilon|^2\big)^{n-2}\big \la \d \big( |q_i^\varepsilon|^2+|p_i^\varepsilon|^2\big),\d \big( |q_i^\varepsilon|^2+|p_i^\varepsilon|^2\big)\big\ra. 
\end{align*}
On the one hand, from \eqref{eqn:d.|q^(epsilon,K)|^2+|p^(epsilon,K)|^2}, we have
\begin{align*}
    &\frac{1}{2}n(n-1)\big( |q_i^\varepsilon|^2+|p_i^\varepsilon|^2\big)^{n-2}\big \la \d \big( |q_i^\varepsilon|^2+|p_i^\varepsilon|^2\big),\d \big( |q_i^\varepsilon|^2+|p_i^\varepsilon|^2\big)\big\ra\\
    &= 4n(n-1)\big( |q_i^\varepsilon|^2+|p_i^\varepsilon|^2\big)^{n-2}|p_i^\varepsilon|^2\d t\\
    &\le 4n(n-1)\big( |q_i^\varepsilon|^2+|p_i^\varepsilon|^2\big)^{n-1}\d t.
\end{align*}
On the other hand, \eqref{ineq:d.|q^(epsilon,K)|^2+|p^(epsilon,K)|^2} implies
\begin{align*}
    & n\big( |q_i^\varepsilon|^2+|p_i^\varepsilon|^2\big)^{n-1}\d \big( |q_i^\varepsilon|^2+|p_i^\varepsilon|^2\big) \\
    & \le C(|\qb^\varepsilon|^2+|\pb^\varepsilon|^2)^n\d t+C\d t + 2\sqrt{2}n\big( |q_i^\varepsilon|^2+|p_i^\varepsilon|^2\big)^{n-1}\la p_i^\varepsilon,\d W_i\ra.
\end{align*}
So, we get
\begin{align} \label{ineq:d.|q^(epsilon,K)|^2n+|p^(epsilon,K)|^2n}
    &\d\big( |q_i^\varepsilon|^2+|p_i^\varepsilon|^2\big)^n \notag \\
    &\le C(|\qb^\varepsilon|^2+|\pb^\varepsilon|^2)^n\d t+C\d t + 2\sqrt{2}n\big( |q_i^\varepsilon|^2+|p_i^\varepsilon|^2\big)^{n-1}\la p_i^\varepsilon,\d W_i\ra ,
\end{align}
whence
\begin{align*}
    \E |(\qb^\varepsilon(t),\pb^\varepsilon(t))|^n \le |(\qb_0,\pb_0)|^n+C\int_0^t \E |(\qb^\varepsilon(s),\pb^\varepsilon(s))|^n\d s +Ct.
\end{align*}
As a consequence, we obtain by virtue of Gronwall's inequality 
\begin{align} \label{ineq:E.int_0^T|q^(epsilon,K)|^2n+|p^(epsilon,K)|^2n}
    \int_0^T \E |(\qb^\varepsilon(t),\pb^\varepsilon(t))|^n\d t \le C.
\end{align}
In the above, we emphasize that the positive constant $C(T,n,K_1\dots,K_N,\qb_0,\pb_0)$ does not depend on $\varepsilon$.

Turning back to \eqref{ineq:E.sup.|p_epsilon^K|^n<C}, we invoke \eqref{ineq:d.|q^(epsilon,K)|^2n+|p^(epsilon,K)|^2n} to infer
\begin{align*}
    \sup_{t\in[0,T]}\big( |q_i^\varepsilon(t)|^2+|p_i^\varepsilon|^2(t)\big)^n & \le \big( |q_i^\varepsilon(0)|^2+|p_i^\varepsilon(0)|^2\big)^n+\int_0^T  |(\qb^\varepsilon(t),\pb^\varepsilon(t)|^n\d t +CT\\
    &\qquad+ 2\sqrt{2}n\sup_{t\in[0,T]} \Big|\int_0^t \big( |q_i^\varepsilon(s)|^2+|p_i^\varepsilon(s)|^2\big)^{n-1}\la p_i^\varepsilon(s),\d W_i(s)\ra\Big|.
\end{align*}
In view of Burkholder's inequality, we get
\begin{align*}
    & \E\sup_{t\in[0,T]} \Big|\int_0^t \big( |q_i^\varepsilon(s)|^2+|p_i^\varepsilon(s)|^2\big)^{n-1}\la p_i^\varepsilon(s),\d W_i(s)\ra\Big|\\
    &\le C\E\int_0^T \big(|q_i^\varepsilon(s)|^2+|p_i^\varepsilon(s)|^2\big)^{2n-2}|p_i^\varepsilon(s)|^2\d s+C\\
    &\le C\E \int_0^T  |(\qb^\varepsilon(t),\pb^\varepsilon(t))|^{2n-1}\d t+C.
\end{align*}
This together with \eqref{ineq:E.int_0^T|q^(epsilon,K)|^2n+|p^(epsilon,K)|^2n} implies
\begin{align*}
    \E \sup_{t\in[0,T]}\big( |q_i^\varepsilon(t)|^2+|p_i^\varepsilon(t)|^2\big)^n \le C
\end{align*}
where $C=C(T,n,K_1\dots,K_N,\qb_0,\pb_0)$ is independent of $\varepsilon$. This establishes \eqref{ineq:E.sup.|p_epsilon^K|^n<C}, as claimed.

\end{proof}

\subsection{Proof of Theorem \ref{thm:newtonian-limit}, part 1} \label{sec:newtonian-limit:proof-of-thm:part-1}

In this subsection, we assume that $N\ge 2$ and establish Theorem \ref{thm:newtonian-limit}, part 1. As mentioned at the beginning of this section, we will employ the result of Proposition \ref{prop:newtonian-limit:Lipschitz} to produce the convergence in probability \eqref{lim:newtonian-limit:N-particle:prob} on any finite time window.

Adapting from the framework \cite{duong2024asymptotic,herzog2016small}, we start the procedure by presenting an auxiliary result concerning the Langevin equation \eqref{eqn:LE:original} and its moment bounds, which do not depend on $\varepsilon$.

\begin{lemma} \label{lem:LE:N-particle:moment-estimate}
For all $(\qb_0,\pb_0)\in \X$, $T>0$, $n\ge 1$, the following holds
\begin{align} \label{ineq:LE:moment-estimate:sup_[0,T]}
\E \sup_{t\in[0,T]}\bigg[\sum_{i=1}^N U(q_i(t))+\sum_{1\le i<j\le N} G(q_i(t)-q_j(t)) \bigg]^n\le C,
\end{align}
for some positive constant $C=C(n,T,\qb_0,\pb_0)$.
\end{lemma}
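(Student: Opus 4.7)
The plan is to exploit the Hamiltonian structure of the classical Langevin dynamics \eqref{eqn:LE:original}. Introduce the Hamiltonian
\begin{align*}
\mathcal{H}(\qb,\pb) := \tfrac{1}{2}|\pb|^2 + \sum_{i=1}^N U(q_i) + \sum_{1\le i<j\le N} G(q_i-q_j),
\end{align*}
which, by the normalization \eqref{cond:U+G>1}, dominates the quantity inside the supremum on the left-hand side of \eqref{ineq:LE:moment-estimate:sup_[0,T]}. It therefore suffices to bound $\E\sup_{t\in[0,T]}\mathcal{H}(\qb(t),\pb(t))$.

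The first step is to apply It\^o's formula to $\mathcal{H}(\qb(t),\pb(t))$. The crucial Hamiltonian cancellation occurs: the transport drift $\sum_i\langle\nabla_{q_i}\mathcal{H},p_i\rangle$ exactly cancels the potential part $-\sum_i\langle p_i,\nabla U(q_i)+\sum_{j\ne i}\nabla G(q_i-q_j)\rangle$ arising from the momentum drift. What survives is
\begin{align*}
\textup{d}\mathcal{H}(\qb(t),\pb(t)) = \bigl(Nd - |\pb(t)|^2\bigr)\,\textup{d}t + \sqrt{2}\sum_{i=1}^N\langle p_i(t),\textup{d}W_i(t)\rangle.
\end{align*}
Because $G$ is singular on $\{q_i=q_j\}$, this identity should first be established up to a localizing sequence of stopping times $\tau_n=\inf\{t\ge 0: \mathcal{H}(\qb(t),\pb(t))\ge n\}$, using the fact (inherited from the well-posedness in $\X$, cf. the discussion after Proposition \ref{prop:rLE:N-particle:well-posed}) that trajectories remain in $\D$ almost surely, so $\tau_n\to\infty$.

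Next I would apply the Burkholder--Davis--Gundy inequality to the martingale $M(t):=\sqrt{2}\sum_i\int_0^t\langle p_i(s),\textup{d}W_i(s)\rangle$ stopped at $\tau_n$, getting
\begin{align*}
\E\sup_{t\in[0,T\wedge\tau_n]}|M(t)| \le C\,\E\Bigl(\int_0^{T\wedge\tau_n}2|\pb(s)|^2\,\textup{d}s\Bigr)^{1/2} \le C\sqrt{T}\,\E\sup_{t\in[0,T\wedge\tau_n]}|\pb(t)|.
\end{align*}
Using $|\pb|\le \sqrt{2\mathcal{H}}$ and writing $\Phi_n:=\E\sup_{t\in[0,T\wedge\tau_n]}\mathcal{H}(\qb(t),\pb(t))$, the It\^o identity (dropping the nonpositive $-|\pb|^2$ term) yields
\begin{align*}
\Phi_n \le \mathcal{H}(\qb_0,\pb_0) + NdT + C\sqrt{T}\,\sqrt{\Phi_n}.
\end{align*}
Young's inequality $C\sqrt{T}\sqrt{\Phi_n}\le \tfrac12\Phi_n + \tfrac12 C^2 T$ gives a bound on $\Phi_n$ uniform in $n$, and Fatou's lemma lets us pass $n\to\infty$, yielding the desired estimate $\E\sup_{t\in[0,T]}\mathcal{H}(\qb(t),\pb(t))\le 2\mathcal{H}(\qb_0,\pb_0)+(2Nd+C^2)T$.

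The main subtlety is the treatment of the singular potential $G$, which prevents a direct global application of It\^o's formula; this is handled by the localization indicated above. Everything else is a clean consequence of the Hamiltonian cancellation, BDG, and Young's inequality, none of which requires any restriction on the parameter $\varepsilon$ (indeed, the bound is for the $\varepsilon=0$ dynamics and serves precisely to feed into the Newtonian-limit argument for $N\ge 2$).
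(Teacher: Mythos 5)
Your proof is correct, and it shares the paper's starting point (the Hamiltonian $\Gamma_1=\mathcal{H}$, It\^o's formula, and the cancellation leaving $\d\mathcal{H}=(Nd-|\pb|^2)\d t+\sqrt{2}\sum_i\la p_i,\d W_i\ra$), but it diverges in how the martingale is controlled. The paper exploits the structural fact that the quadratic variation $\la M\ra(t)=2\int_0^t|\pb|^2\d s$ is exactly absorbed by the dissipation term $-\int_0^t|\pb|^2\d s$ in the drift, and applies the exponential martingale inequality \eqref{ineq:exponential-Martingale} to get $\E\exp\{\tfrac12\sup_{[0,T]}\Gamma_1\}\le 2\exp\{\tfrac12\Gamma_1(0)+NdT\}$; the first-moment bound \eqref{ineq:LE:moment-estimate:sup_[0,T]} then follows, along with exponential (hence all polynomial) moments for free --- which is exactly the extra strength the companion Lemma \ref{lem:LE:single-particle:moment-estimate} needs. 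You instead discard the dissipation term, estimate $\E\sup|M|$ by Burkholder--Davis--Gundy, and close the bound with the self-bounding inequality $\Phi_n\le \mathcal{H}_0+NdT+C\sqrt{T}\sqrt{\Phi_n}$ plus Young; this yields the first moment asked for in the statement, and would have to be iterated (or run with $\mathcal{H}^n$) to recover higher moments. On the other hand, your localization by $\tau_n$ --- which guarantees $\Phi_n<\infty$ before the self-bounding step and justifies It\^o's formula despite the singularity of $G$ on $\{q_i=q_j\}$ --- is a genuine technical point that the paper's proof passes over silently, so your write-up is in that respect more careful. Both arguments are valid; the paper's buys stronger (exponential) integrability, yours is more elementary and makes the localization explicit.
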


Since the proof of Lemma \ref{lem:LE:N-particle:moment-estimate} is similar to that of Lemma \ref{lem:E.sup.|p_epsilon^K|^n<C}, we defer the argument to the end of this subsection. Next, we shall truncate the nonlinearities in \eqref{eqn:rLE:N-particle:epsilon} and \eqref{eqn:LE:original} as follows: for $R>2$, let $\theta_R:\rbb\to\rbb$ be a smooth function satisfying
\begin{align} \label{form:theta_R}
\theta_R(t) = \begin{cases} 
1,&  |t|\le R,\\
\text{monotonicity},& R\le |t|\le R+1,\\
0,& |t|\ge R+1.
\end{cases}
\end{align}
With the above cut-off $\theta_R$, we consider the truncated version of \eqref{eqn:rLE:N-particle:epsilon} given by
\begin{align} \label{eqn:rLE:N-particle:epsilon:truncating}
\d\, q_i^{\varepsilon,R}(t) &= \frac{p_i^{\varepsilon,R} }{\sqrt{1+\varepsilon |p_i^{\varepsilon,R}|^2}}\d t, \notag\\
\d\, p_i^{\varepsilon,R}(t) & = -\frac{p_i^{\varepsilon,R}}{\sqrt{1+\varepsilon |p_i^{\varepsilon,R}|^2}} \d t-\theta_R(|q_i^{\varepsilon,R}(t)|) \grad \U(q_i^{\varepsilon,R}(t))\d t +\sqrt{2} \,\d W_{i}(t)\notag \\
&\qquad - \sum_{j\neq i}\theta_R\big(|q_i^{\varepsilon,R}(t)-q_j^{\varepsilon,R}(t)|^{-1}\big) \grad \G\big(q_i^{\varepsilon,R}(t)-q_j^{\varepsilon,R}(t)\big) \d t ,
\end{align}
as well as the following truncated version of \eqref{eqn:LE:original}
\begin{align} \label{eqn:LE:N-particle:truncating}
\d\, q_i^{R}(t) &= p_i^{R} \d t, \notag\\
\d\, p_i^{R}(t) & = -p_i^{R}\d t -\theta_R(|q_i^{R}(t)|) \grad \U(q_i^{R}(t))\d t +\sqrt{2} \,\d W_{i}(t) \notag \\
&\qquad - \sum_{j\neq i}\theta_R\big(|q_i^{R}(t)-q_j^{R}(t)|^{-1}\big) \grad \G\big(q_i^{R}(t)-q_j^{R}(t)\big) \d t.
\end{align}

Given the above truncated systems and assuming the result of Lemma \ref{lem:LE:N-particle:moment-estimate}, we now provide the proof of Theorem \ref{thm:newtonian-limit}, part 1 giving the convergence in probability for the $N-$particle system with respect to the Newtonian limit. The approach of which is drawn upon the technique of \cite[Theorem 2.10]{duong2024asymptotic} tailored to our settings. In particular, the argument will combine Lemma \ref{lem:LE:N-particle:moment-estimate} with Proposition \ref{prop:newtonian-limit:Lipschitz} as well as Lemma \ref{lem:U+G>|q|-log(q)} so as to establish the desired limit \eqref{lim:newtonian-limit:N-particle:prob}

\begin{proof}[Proof of Theorem \ref{thm:newtonian-limit}, part 1] 
For $R,\,\varepsilon>0$, we introduce the stopping times defined as
\begin{align}  \label{form:stopping-time:sigma^R}
\sigma^R = \inf_{t\geq 0}\Big\{\sum_{i=1}^N|q_i(t)|+\close\sum_{1\le i<j\le N}\close|q_i(t)-q_j(t)|^{-1}\geq R\Big\},
\end{align}
and
\begin{align} \label{form:stopping-time:sigma^R_varepsilon}
\sigma^R_\varepsilon = \inf_{t\geq 0}\Big\{\sum_{i=1}^N|q_i^\varepsilon(t)|+\close\sum_{1\le i<j\le N}\close |q_i^\varepsilon(t)-q_j^\varepsilon(t)|^{-1}\geq R\Big\}.
\end{align}
Fixing $T,\,\xi>0$, observe that
\begin{align} \label{ineq:P(|x-q|>xi)}
&\P\Big(\sup_{t\in[0,T]}\big( |\qb^\varepsilon(t)-\qb(t)|+ |\pb^\varepsilon(t)-\pb(t)|) >\xi\Big) \notag \\
&\leq \P\Big(\sup_{t\in[0,T]}|\qb^\varepsilon(t)-\qb(t)|+ |\pb^\varepsilon(t)-\pb(t)|>\xi,\sigma^R\mi\sigma^R_\varepsilon\geq T\Big) +\P\big(\sigma^R\mi\sigma^R_\varepsilon<T\big).
\end{align}
With regard to the first term on the above right-hand side, we note that
\begin{align*}
\P\Big(0\leq t\leq \sigma^R\mi\sigma^R_\varepsilon,\, \big(\qb^\varepsilon(t),\pb^\varepsilon(t)\big)=\big(\qb^{\varepsilon,R}(t),\pb^{\varepsilon,R}(t)\big),\, \big(\qb(t),\pb(t)\big)=\big(\qb^R(t),\pb^R(t)\big) \Big)=1,
\end{align*}
where $\big(\qb^{\varepsilon,R}(t),\pb^{\varepsilon,R}(t)\big)$ and $\big(\qb^R(t),\pb^R(t)\big)$ respectively are the solutions of the truncated equations \eqref{eqn:rLE:N-particle:epsilon:truncating} and \eqref{eqn:LE:N-particle:truncating}. In particular, being Lipschitz systems, they satisfy the hypothesis of Proposition \ref{prop:newtonian-limit:Lipschitz}. As a consequence, we invoke \eqref{lim:newtonian-limit:Lipschitz} while making use of Markov's inequality to infer
\begin{align} \label{ineq:P(|x-q|>xi,sigma>T)}
&\P\Big(\sup_{0\leq t\leq T} |\qb^{\varepsilon}(t)-\qb(t)| + |\pb^{\varepsilon}(t)-\pb(t)|>\xi,\sigma^R\mi\sigma^R_\varepsilon\geq T\Big) \nt \\
&\leq \P\Big(\sup_{0\leq t\leq T}|\qb^{\varepsilon,R}(t)-\qb^R(t)| + |\pb^{\varepsilon,R}(t)-\pb^R(t)|>\xi\Big)\le  \frac{\varepsilon}{\xi}\cdot C(T,R).
\end{align} 
Turning to the second term $\P(\sigma^R\mi\sigma^R_\varepsilon<T)$ on the right-hand side of \eqref{ineq:P(|x-q|>xi)}, we have the following implication
\begin{align*}
&\P\big(\sigma^R\mi\sigma^R_\varepsilon<T\big)\nt \\
&\leq \P\Big(\sup_{t\in[0,T]}|\qb^{\varepsilon,R}(t)-\qb^R(t)| + |\pb^{\varepsilon,R}(t)-\pb^R(t)|\leq \frac{\xi}{R},\sigma^R\mi\sigma^R_\varepsilon < T\Big) \notag\\
&\qquad +\P\Big(\sup_{t\in[0,T]}|\qb^{\varepsilon,R}(t)-\qb^R(t)| + |\pb^{\varepsilon,R}(t)-\pb^R(t)|> \frac{\xi}{R}\Big),
\end{align*}
whence
\begin{align}
&\P\big(\sigma^R\mi\sigma^R_\varepsilon<T\big)\nt \\
&\leq  \P\Big(\sup_{t\in[0,T]}|\qb^{\varepsilon,R}(t)-\qb^R(t)| + |\pb^{\varepsilon,R}(t)-\pb^R(t)|\leq\frac{\xi}{R},\sigma^R_\varepsilon < T\leq\sigma^R\Big)+\P(\sigma^R< T) \nt \\
&\qquad\qquad\qquad+\P\Big(\sup_{t\in[0,T]}|\qb^{\varepsilon,R}(t)-\qb^R(t)| + |\pb^{\varepsilon,R}(t)-\pb^R(t)|> \frac{\xi}{R}\Big)  \nt \\
&= I_1+I_2+I_3. \label{ineq:P(sigma<T):I_1+I_2+I_3}
\end{align}
We proceed to estimate each term on the above right-hand side and start with $I_3$. The same argument as in \eqref{ineq:P(|x-q|>xi,sigma>T)} making use of \eqref{lim:newtonian-limit:Lipschitz} implies the bound 
\begin{align}\label{ineq:P(sigma<T):I_3}
I_3=\P\Big(\sup_{t\in[0,T]}|\qb^{\varepsilon,R}(t)-\qb^R(t)| + |\pb^{\varepsilon,R}(t)-\pb^R(t)|> \frac{\xi}{R}\Big)\le  \frac{\varepsilon}{\xi}\cdot C(T,R).
\end{align}
Next, considering $I_2$, let $C_G$ and $c_G$ be the constants from Lemma \ref{lem:U+G>|q|-log(q)}. From the expression \eqref{form:stopping-time:sigma^R} together with the second inequality of \eqref{ineq:U+G>|q|-log(q)}, we get
\begin{align*}
\big\{  \sigma^R<T\big\}&=\Big\{\sup_{t\in[0,T]}\sum_{i=1}^N|q_i(t)|+\close\sum_{1\le i<j\le N}\close|q_i(t)-q_j(t)|^{-1}\geq R\Big\}\\
&\subseteq \Big\{  \sup_{t\in[0,T]}\sum_{i=1}^N|q_i(t)|\geq \frac{R}{N}\Big\}\bigcup_{1\le i<j\le N} \Big\{\sup_{t\in[0,T]} -c_G\log|q_i(t)-q_j(t)|\geq c_G\log\Big(\frac{R}{N^2}\Big)\Big\}.
\end{align*}
On the one hand, we apply \eqref{ineq:|q|-log|q|>0}, i.e.,
\begin{align*}
    \frac{1}{2}\sum_{i=1}^N|q_i(t)|-c_G\close\sum_{1\le i<j\le N}\close\log|q_i(t)-q_j(t)|\ge 0,
\end{align*}
to infer for all $R$ large enough
\begin{align*}
    &\Big\{  \sup_{t\in[0,T]}\sum_{i=1}^N|q_i(t)|\geq \frac{R}{N}\Big\}\\
    &\subseteq  \Big\{ \sup_{t\in[0,T]}\sum_{i=1}^N|q_i(t)|-c_G\close\sum_{1\le i<j\le N}\close\log|q_i(t)-q_j(t)| \geq \frac{1}{2}\frac{R}{N}\Big\}\\
    &\subseteq  \Big\{ \sup_{t\in[0,T]}\sum_{i=1}^N|q_i(t)|-c_G\close\sum_{1\le i<j\le N}\close\log|q_i(t)-q_j(t)| \geq c_G\log\Big(\frac{R}{N^2}\Big)\Big\}.
\end{align*}
On the other hand, we invoke the second inequality of \eqref{ineq:U+G>|q|-log(q)} to estimate
\begin{align*}
    &\Big\{\sup_{t\in[0,T]} -c_G\log|q_i(t)-q_j(t)|\geq c_G\log\Big(\frac{R}{N^2}\Big)\Big\}\\
    &\subseteq \Big\{\sup_{t\in[0,T]} \Big[\max_{1\le \ell<k\le N}-c_G\log|q_\ell(t)-q_k(t)|\Big]\geq c_G\log\Big(\frac{R}{N^2}\Big)\Big\}\\
    &\subseteq \Big\{\sup_{t\in[0,T]} \Big[c_G\sum_{k=1}^N|q_k(t)|+  \max_{1\le \ell<k\le N}-c_G\log|q_\ell(t)-q_k(t)|\Big]\geq c_G\log\Big(\frac{R}{N^2}\Big)\Big\}\\
    &\subseteq  \Big\{ \sup_{t\in[0,T]}\sum_{i=1}^N|q_i(t)|-c_G\close\sum_{1\le i<j\le N}\close\log|q_i(t)-q_j(t)| \geq c_G\log\Big(\frac{R}{N^2}\Big)\Big\}.
\end{align*}
So, we get
\begin{align} \label{ineq:sigma^R<T}
    \big\{  \sigma^R<T\big\} &\subseteq \Big\{ \sup_{t\in[0,T]}\sum_{i=1}^N|q_i(t)|-c_G\close\sum_{1\le i<j\le N}\close\log|q_i(t)-q_j(t)| \geq c_G\log\Big(\frac{R}{N^2}\Big)\Big\} \notag \\
&\subseteq \Big\{ \sup_{t\in[0,T]}\sum_{i=1}^N U(q_i)+\sum_{1\le i<j\le N}G(q_i-q_i) \geq \frac{c_G}{C_G}\log\Big(\frac{R}{N^2}\Big)\Big\}.
\end{align}
where the last inclusion follows from the first inequality of \eqref{ineq:U+G>|q|-log(q)}. In view of Lemma \ref{lem:LE:N-particle:moment-estimate} together with Markov's inequality, we infer the bound for $R$ large enough
\begin{align} \label{ineq:P(sigma<T):I_2}
I_2=\P(\sigma^R<T)\le \frac{C_G}{c_G}\cdot \frac{C(T,\qb_0,\pb_0)}{\log(R/N^2)}\le \frac{C(T)}{ \log R}.
\end{align}
Turning to $I_1$ on the right-hand side of \eqref{ineq:P(sigma<T):I_1+I_2+I_3}, we observe that 
\begin{align*}
&\Big\{\sup_{t\in[0,T]} |\qb^{\varepsilon,R}(t)-\qb^R(t)|+|\pb^{\varepsilon,R}(t)-\pb^R(t)| \leq\frac{\xi}{R},\sigma^R_\varepsilon < T\leq\sigma^R\Big\}\\
&= \Big\{\sup_{t\in[0,T]}|\qb^{\varepsilon,R}(t)-\qb(t)|+|\pb^{\varepsilon,R}(t)-\pb(t)| \leq\frac{\xi}{R},\sigma^R_\varepsilon < T\leq\sigma^R\Big\}\\
&\hspace{2cm}\bigcap\Big\{ \sup_{t\in[0,T]}\Big(\sum_{i=1}^N|q_i^{\varepsilon, R} (t)|+\close\sum_{1\le i<j\le N}\close|q_{i}^{\varepsilon, R}(t)-q_{j}^{\varepsilon, R}(t)|^{-1}\Big)\ge R\Big\}  \\
&\subseteq \Big\{\sup_{t\in[0,T]}|\qb^{\varepsilon, R}(t)-\qb(t)|\leq\frac{\xi}{R},\sup_{t\in[0,T]}\sum_{i=1}^N|q_i^{\varepsilon,R}(t)|\ge \frac{R}{N}\Big\}\\
&\qquad\qquad \bigcup_{1\le i<j\le N}\Big\{\sup_{t\in[0,T]}|\qb^{\varepsilon, R}(t)-\qb(t)|\leq\frac{\xi}{R},\sup_{t\in[0,T]}|q_i^{\varepsilon,R}(t)-q_j^{\varepsilon,R}(t)|^{-1}\ge \frac{R}{N^2}\Big\}.
\end{align*}
In other words, we obtain
\begin{align*}
&\Big\{\sup_{t\in[0,T]} |\qb^{\varepsilon,R}(t)-\qb^R(t)|+|\pb^{\varepsilon,R}(t)-\pb^R(t)| \leq\frac{\xi}{R},\sigma^R_\varepsilon < T\leq\sigma^R\Big\}\\
&\subseteq \Big\{\sup_{t\in[0,T]}|\qb^{\varepsilon, R}(t)-\qb(t)|\leq\frac{\xi}{R},\sup_{t\in[0,T]}\sum_{i=1}^N|q_i^{\varepsilon,R}(t)|\ge \frac{R}{N}\Big\}\\
&\qquad\qquad \bigcup_{1\le i<j\le N}\Big\{\sup_{t\in[0,T]}|\qb^{\varepsilon, R}(t)-\qb(t)|\leq\frac{\xi}{R},\inf_{t\in[0,T]}|q_i^{\varepsilon,R}(t)-q_j^{\varepsilon,R}(t)|\le \frac{N^2}{R}\Big\}\\
&=: B_1 \bigcup_{1\le i<j\le N} B_{ij}.
\end{align*}
On the one hand, we have
\begin{align*}
B_1&=\Big\{\sup_{t\in[0,T]}|\qb^{\varepsilon, R}(t)-\qb(t)|\leq\frac{\xi}{R},\sup_{t\in[0,T]}\sum_{i=1}^N|q_i^{\varepsilon,R}(t)|\ge \frac{R}{N}\Big\} \\
& \subseteq \Big\{\sup_{t\in[0,T]}\sum_{i=1}^N|q_i^{\varepsilon, R}(t)-q_i(t)|\leq\frac{\xi \sqrt{N} }{R},\sup_{t\in[0,T]}\sum_{i=1}^N|q_i^{\varepsilon,R}(t)|\ge \frac{R}{N}\Big\} \\
&\subseteq  \Big\{\sup_{t\in[0,T]}\sum_{i=1}^N|q_i(t)|\ge \frac{R}{N}- \frac{\xi\sqrt{N}}{R}\Big\}.
\end{align*}
Since $\xi$ and $N$ are fixed, for $R$ large enough, say, $\frac{R}{N}- \frac{\xi\sqrt{N}}{R}\ge \sqrt{R}$, it holds that
\begin{align*}
B_1& \subseteq  \Big\{\sup_{t\in[0,T]}\sum_{i=1}^N|q_i(t)|\ge \sqrt{R}\Big\}\\
&\subseteq \Big\{\sup_{t\in[0,T]}\sum_{i=1}^N|q_i(t)|-c_G\close\sum_{1\le i<j\le N}\close\log|q_i(t)-q_j(t)| \ge \frac{1}{2}\sqrt{R}\Big\},
\end{align*}
where the last inclusion follows from \eqref{ineq:|q|-log|q|>0}. We invoke the first inequality of \eqref{ineq:U+G>|q|-log(q)} from Lemma \ref{lem:U+G>|q|-log(q)} to deduce further
\begin{align*}
   B_1 &\subseteq \Big\{ \sup_{t\in[0,T]}\sum_{i=1}^NU(q_i(t))+\close\sum_{1\le i<j\le N}\close G(q_i(t)-q_j(t)) \geq \frac{c_G}{2C_G}\sqrt{R}\Big\}.
\end{align*} 
On the other hand, by triangle inequality,
\begin{align*}
\inf_{t\in[0,T]}|q_{i}(t)-q_{j}(t)| \le 2\sup_{t\in[0,T]}|\qb^{\varepsilon, R}(t)-\qb(t)|+\inf_{t\in[0,T]}|q_{i}^{\varepsilon, R}(t)-q_{j}^{\varepsilon, R}(t)|,
\end{align*}
implying
\begin{align*}
B_{ij}& = \Big\{\sup_{t\in[0,T]}|\qb^{\varepsilon, R}(t)-\qb(t)|\leq\frac{\xi}{R},\inf_{t\in[0,T]}|q_i^{\varepsilon,R}(t)-q_j^{\varepsilon,R}(t)|\le \frac{N^2}{R}\Big\}\\
&\subseteq   \Big\{\inf_{t\in[0,T]}|q_{i}(t)-q_{j}(t)|\le \frac{2\xi+N^2}{R}\Big\}\\
&= \Big\{\sup_{t\in[0,T]}|q_{i}(t)-q_{j}(t)|^{-1}\ge \frac{R}{2\xi+N^2}\Big\}\\
&= \Big\{\sup_{t\in[0,T]}-\log|q_{i}(t)-q_{j}(t)|\ge \log R -\log (2\xi+N^2)\Big\}.
\end{align*}
We employ the second inequality of \eqref{ineq:U+G>|q|-log(q)} again to infer for all $R$ large enough
\begin{align*}
B_{ij}&\subseteq \Big\{ \sup_{t\in[0,T]}-\log|q_{i}(t)-q_{j}(t)|\ge \log R -\log (2\xi+N^2)\Big\}\\
&\subseteq \Big\{ \sup_{t\in[0,T]}\Big[\max_{1\le \ell<k\le N}-\log|q_{\ell}(t)-q_{k}(t)|\Big]\ge \log R -\log (2\xi+N^2)\Big\}.
\end{align*}
As a consequence of the first inequality of \eqref{ineq:U+G>|q|-log(q)} and estimate \eqref{ineq:|q|-log|q|>0}, we get
\begin{align*}
B_{ij}&\subseteq \Big\{ \sup_{t\in[0,T]} \sum_{i=1}^N |q_i(t)|-c_G\sum_{ 1\le i<j \le N}\log|q_i(t)-q_j(t)|   \geq \frac{1}{2}c_G \log R\Big\}\\
&\subseteq \Big\{ \sup_{t\in[0,T]} \sum_{i=1}^N U(q_i(t))+\sum_{1\le i<j \le N}G(q_i(t)-q_j(t))   \geq \frac{c_G}{2C_G} \log R\Big\}.
\end{align*}
Altogether from the estimates on $B_1$ and $B_{ij}$, we obtain
\begin{align*}
&\Big\{\sup_{t\in[0,T]} |\qb^{\varepsilon,R}(t)-\qb^R(t)|+|\pb^{\varepsilon,R}(t)-\pb^R(t)| \leq\frac{\xi}{R},\sigma^R_\varepsilon < T\leq\sigma^R\Big\}\\
&\subseteq \Big\{ \sup_{t\in[0,T]} \sum_{i=1}^N U(q_i(t))+\sum_{1\le i<j \le N}G(q_i(t)-q_j(t))   \geq \frac{c_G}{2C_G} \log R\Big\}.
\end{align*}
In turn, \eqref{ineq:LE:moment-estimate:sup_[0,T]} implies that
\begin{align} \label{ineq:P(sigma<T):I_1}
I_1&=\P\Big\{\sup_{t\in[0,T]} |\qb^{\varepsilon,R}(t)-\qb^R(t)|+|\pb^{\varepsilon,R}(t)-\pb^R(t)| \leq\frac{\xi}{R},\sigma^R_\varepsilon < T\leq\sigma^R\Big\}\notag \\
&\le\frac{C(T,\qb_0,\pb_0)}{ \log R}.
\end{align}
Turning back to \eqref{ineq:P(sigma<T):I_1+I_2+I_3}, we collect~\eqref{ineq:P(sigma<T):I_3}, \eqref{ineq:P(sigma<T):I_2} and \eqref{ineq:P(sigma<T):I_1} to arrive at the bound
\begin{align} \label{ineq:P(sigma<T)}
&\P\big(\sigma^R\mi\sigma^R_\varepsilon<T\big)\le  \frac{\varepsilon}{\xi}\cdot C(T,R)+ \frac{C(T)}{\log R}.
\end{align}
In the above estimate, while the positive constants $C(T,R)$ and $C(T)$ may be very large as $R,T$ tend to infinity, we emphasize that they are independent of $\varepsilon$.

Now, putting everything together, from \eqref{ineq:P(|x-q|>xi)}, \eqref{ineq:P(|x-q|>xi,sigma>T)} and \eqref{ineq:P(sigma<T)}, we obtain the estimate
\begin{align*}
&\P\Big(\sup_{t\in[0,T]}\big( |\qb^\varepsilon(t)-\qb(t)|+ |\pb^\varepsilon(t)-\pb(t)|\big) >\xi\Big)\\
& \le \frac{\varepsilon}{\xi}\cdot C(T,R)+ \frac{C(T)}{\log R},
\end{align*}
for all $R$ large enough. By sending $R$ to infinity and then shrinking $\varepsilon$ further to zero, this produces the convergence in probability \eqref{lim:newtonian-limit:N-particle:prob}, thereby completing the proof.

\end{proof}

Lastly, in this subsection, we provide the proof of Lemma \ref{lem:LE:N-particle:moment-estimate}, which together with Proposition \eqref{prop:newtonian-limit:Lipschitz} was employed to establish Theorem \ref{thm:newtonian-limit}, part 1 for the case $N\ge 2$.

\begin{proof}[Proof of Lemma \ref{lem:LE:N-particle:moment-estimate}]
We introduce the function 
\begin{align*}
\Gamma_1(\qb,\pb)= \sum_{i=1}^N U(q_i)+ \sum_{1\le i<j\le N}G(q_i-q_j)+\frac{1}{2}|\pb|^2. 
\end{align*}
We claim that for all $n\ge 2$, the following holds
\begin{align} \label{ineq:E.int_0^T|Gamma_1|}
    \E\int_0^T \Gamma_1(t)^n\d t \le C(n,T,\qb_0,\pb_0).
\end{align}
Indeed, from \eqref{eqn:LE:original}, we apply It\^o's formula to ``(...) we apply It\^o's formula to $ \Gamma_1(t)=\Gamma_1(\qb(t),\pb(t))$ and obtain the identity
\begin{align*} 
\d \Gamma_1(t) & = -|\pb(t)|^2\d t + Nd\,\d t+ \sqrt{2}\sum_{i=1}^N \la p_i(t),\d W_i(t)\ra.
\end{align*}
More generally, for $n\ge 2$, we have
\begin{align*}
    \d \Gamma_1(t)^{n} & = n\Gamma_1(t)^{n-1}\Big(-|\pb(t)|^2\d t + Nd\,\d t + \sqrt{2}\sum_{i=1}^N \la p_i(t),\d W_i(t)\ra \Big) \notag \\
    &\quad +n(n-1)\Gamma_1(t)^{n-2}|\pb(t)|^2\d t.
\end{align*}
Recalling condition \eqref{cond:U+G>1:N>1}, we note that $\Gamma_1\ge \frac{1}{2}|\pb|^2$. Hence, we may exploit Young's inequality to infer
\begin{align*}
    \Gamma_1^{n-2}|\pb|^2\le \Gamma_1^{n-1}\le \Gamma_1^n+C,
\end{align*}
whence
\begin{align} \label{ineq:d.Gamma_1^n}
     \d \Gamma_1(t)^{n} & = n\Gamma_1(t)^{n-1}\Big(-|\pb(t)|^2\d t + Nd\,\d t + \sqrt{2}\sum_{i=1}^N \la p_i(t),\d W_i(t)\ra \Big) \notag \\
    &\quad +n(n-1)\Gamma_1(t)^{n}\d t+C\d t.
\end{align}
It follows that
\begin{align*}
    \E \Gamma_1(t)^{n} & \le \E\Gamma_1(0)^n+ CT +C\int_0^T \E\Gamma_1(s)^n\d s .
\end{align*}
By virtue of Gronwall's inequality, we deduce \eqref{ineq:E.int_0^T|Gamma_1|}, as claimed.

Turning back to \eqref{ineq:LE:moment-estimate:sup_[0,T]}, we employ \eqref{ineq:d.Gamma_1^n} once again to deduce
\begin{align*}
    \E \sup_{t\in[0,T]}\Gamma_1(t)^n      &\le \Gamma_1(0)^n + CT +C\E\int_0^T\Gamma_1(s)^n\d s\\
    &\qquad+n\sqrt{2}\E\sup_{t\in[0,T]}\Big|\int_0^t \Gamma_1(s)^{n-1}\sum_{i=1}^N\la p_i(s),\d W_i(s)\ra  \Big|.
\end{align*}
We invoke Burkholder's inequality to further estimate
\begin{align*}
    \E\sup_{t\in[0,T]}\Big|\int_0^t \Gamma_1(s)^{n-1}\sum_{i=1}^N\la p_i(s),\d W_i(s)\ra  \Big| &\le C\E\int_0^T \Gamma_1(t)^{2n-2}|\pb(t)|^2\d s +C\\
    &\le C\E\int_0^T \Gamma_1(t)^{2n-1}\d s +C.
\end{align*}
As a consequence, we deduce
\begin{align*}
     \E \sup_{t\in[0,T]}\Gamma_1(t)^n  \le  \Gamma_1(0)^n + CT +C\E\int_0^T\Gamma_1(s)^n+\Gamma_1(s)^{2n-1}\d s+C.
\end{align*}
In view of \eqref{ineq:E.int_0^T|Gamma_1|}, we obtain
\begin{align*}
    \E \sup_{t\in[0,T]}\Gamma_1(t)^n  \le  \Gamma_1(0)^n + CT +C,
\end{align*}
whence
\begin{align*}
    \E \sup_{t\in[0,T]}\bigg[\sum_{i=1}^N U(q_i(t))+\sum_{1\le i<j\le N} G(q_i(t)-q_j(t)) \bigg]^n \le  \Gamma_1(0)^n + CT +C.
\end{align*}
This establishes \eqref{ineq:LE:moment-estimate:sup_[0,T]}, thereby finishing the proof.

\end{proof}

\subsection{Proof of Theorem \ref{thm:newtonian-limit}, part 2} \label{sec:newtonian-limit:proof-of-thm:part-2}

In this subsection, we restrict our proof arguments to the single-particle case $N=1$ and improve the convergence in probability argument in Section \ref{sec:newtonian-limit:proof-of-thm:part-1}. Namely, we establish the $L^p$ convergence of \eqref{eqn:rLE:single-particle:epsilon} toward the single-particle Langevin equation, thereby concluding Theorem \ref{thm:newtonian-limit}, part 2.

For the reader's convenience, when $N=1$, the full equation \eqref{eqn:LE:original} is reduced to the following equation:
\begin{align} \label{eqn:LE:single-particle}
\d\, q(t) &= p(t)\d t, \notag\\
\d\, p(t) & = -p(t) \d t -\grad U(q(t))\d t- \grad G\big(q(t)\big) \d t +\sqrt{2} \,\d W(t).
\end{align}

The analogue of Lemma \ref{lem:LE:N-particle:moment-estimate} for \eqref{eqn:LE:single-particle} is provided below through Lemma \ref{lem:LE:single-particle:moment-estimate}, giving arbitrary moment bounds on \eqref{eqn:LE:single-particle} in any finite time window. Since the proof of this result is the same as that of Lemma \ref{lem:LE:N-particle:moment-estimate}, the explicit argument is omitted.

\begin{lemma} \label{lem:LE:single-particle:moment-estimate}
For all $(q_0,p_0)\in \X$, let $(q(t),p(t))$ be the solution of \eqref{eqn:LE:single-particle} with initial condition $(q_0,p_0)$. Then, for all $T>0$ and $n\geq 1$, the following holds
\begin{align*}
\E \sup_{t\in[0,T]}\bigg[U(q(t))+G(q(t))+\frac{1}{2}|p(t)|^2 \bigg]^n\le C,
\end{align*}
for some positive constant $C=C(n,T,q_0,p_0)$.
\end{lemma}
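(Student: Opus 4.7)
The plan is to reprise, in the single-particle setting, the exponential-martingale computation already performed in the proof of Lemma \ref{lem:LE:N-particle:moment-estimate}. That argument in fact produces a bound on an exponential moment of $\sup_{t\in[0,T]}\Gamma(t)$, not merely on its first moment; once such an exponential bound is in hand, every polynomial moment of order $n\ge 1$ comes for free from the elementary inequality $x^n\le n!\,2^n\,e^{x/2}$ valid for $x\ge 0$.

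Concretely, set $\Gamma(q,p) := U(q)+G(q)+\tfrac{1}{2}|p|^2$, which by the normalization \eqref{cond:U+G>1} satisfies $\Gamma\ge 1$ on $\X$. Applying It\^o's formula along the solution of \eqref{eqn:LE:single-particle} (the Hamiltonian cross terms $\langle \grad U + \grad G, p\rangle$ cancel between $dq$ and $dp$) gives
\[
d\Gamma(t) = (-|p(t)|^2 + d)\,dt + \sqrt{2}\,\langle p(t),dW(t)\rangle.
\]
Rearranging, one has $\tfrac{1}{2}\Gamma(t) = \tfrac{1}{2}\Gamma(0) + \tfrac{d}{2}t + N(t) - \langle N\rangle(t)$, where $N(t) := \int_0^t \tfrac{1}{\sqrt{2}}\langle p(s),dW(s)\rangle$ has quadratic variation $\langle N\rangle(t) = \int_0^t \tfrac{1}{2}|p(s)|^2\,ds$. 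The exponential Martingale inequality \eqref{ineq:exponential-Martingale} applied with $c=1$ gives $\P(\sup_{t\ge 0}[N(t)-\langle N\rangle(t)]>r)\le e^{-2r}$, and integrating this tail against $e^r\,dr$ in the usual way yields
\[
\E\exp\!\Big\{\sup_{t\in[0,T]}[N(t)-\langle N\rangle(t)]\Big\}\le 2.
\]
Combining with the rearrangement above,
\[
\E\exp\!\Big\{\tfrac{1}{2}\sup_{t\in[0,T]}\Gamma(t)\Big\}\le 2\exp\!\Big\{\tfrac{1}{2}\Gamma(0)+\tfrac{d}{2}T\Big\}<\infty.
\]

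Finally, for any $n\ge 1$, the bound $x^n\le 2^n n!\,e^{x/2}$ for $x\ge 0$ gives $\Gamma(t)^n \le 2^n n!\,e^{\Gamma(t)/2}$, whence
\[
\E\sup_{t\in[0,T]}\Gamma(t)^n \le 2^n n!\,\E\exp\!\Big\{\tfrac{1}{2}\sup_{t\in[0,T]}\Gamma(t)\Big\}\le C(n,T,q_0,p_0),
\]
which is exactly the claimed bound. There is no substantive obstacle in the argument itself; the only point requiring mild care is the justification of the It\^o formula on the open set $\X=\{q\ne 0\}$, which is handled by the standard localization along the sequence of exit times $\tau_R=\inf\{t\ge 0: |q(t)|+|q(t)|^{-1}\ge R\}$ and passing $R\to\infty$ using the well-posedness of \eqref{eqn:LE:single-particle} (for which $G$ being a strong confining potential at the origin is what prevents finite-time collision).
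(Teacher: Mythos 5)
Your argument is correct and is essentially the paper's own: the paper omits the proof of this lemma, stating it is identical to that of Lemma \ref{lem:LE:N-particle:moment-estimate}, which is precisely the exponential-martingale computation on the Hamiltonian $\Gamma=U+G+\tfrac12|p|^2$ that you carry out. Your final step extracting the $n$-th moment from the exponential moment via $x^n\le 2^n n!\,e^{x/2}$ (together with the localization remark) is exactly the detail the paper leaves implicit, since its $N$-particle lemma is only stated for the first moment while the single-particle version claims all $n\ge 1$.
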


Next, we turn to the solution $(q^\varepsilon,p^\varepsilon)$ of \eqref{eqn:rLE:single-particle:epsilon} and establish uniform moment bounds on $(q^\varepsilon,p^\varepsilon)$ with respect to the parameter $\varepsilon$. We note that this is the crucial result on which Theorem \ref{thm:newtonian-limit}, part 2 relies.

 \begin{lemma} \label{lem:rLE:single-particle:moment-estimate:sup_[0,T]}
Suppose that Assumptions \textup{\nameref{cond:U}} and \textup{\nameref{cond:G1}} hold. Let $(q^\varepsilon(t),p^\varepsilon(t))$ be the solution of \eqref{eqn:rLE:single-particle:epsilon} with initial condition $(q_0,p_0)\in \X$. Then, for all $T>0$ and $n\ge 1$, the following holds
\begin{align}\label{ineq:rLE:single-particle:moment-estimate:sup_[0,T]}
\sup_{\varepsilon\in(0,1]}\E  \sup_{t\in[0,T]}\Big[ U(q^\varepsilon(t))+G(q^{\varepsilon}(t))+|p^\varepsilon(t)|^2\Big]^n \le C(T,n,q_0,p_0),
\end{align}
for some positive constant $C(T,n,q_0,p_0)$ independent of $\varepsilon$.
 \end{lemma}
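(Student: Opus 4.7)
The plan is to exploit the relativistic Hamiltonian structure to build an energy functional that is a near-supermartingale \emph{uniformly} in $\varepsilon$ and that controls $U+G+|p|^2$ up to squaring. First, I would introduce the shifted relativistic energy
\[
\widetilde E(q,p) \;:=\; U(q)+G(q)+\frac{1}{\varepsilon}\bigl[\sqrt{1+\varepsilon|p|^2}-1\bigr] \;=\; U(q)+G(q)+\frac{|p|^2}{\sqrt{1+\varepsilon|p|^2}+1}.
\]
By \eqref{cond:U+G>1} we have $\widetilde E\ge 1$; and since the kinetic part is monotone decreasing in $\varepsilon$, its initial value is uniformly controlled by $\widetilde E(q_0,p_0)\le U(q_0)+G(q_0)+\tfrac12|p_0|^2$. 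A direct computation using the relativistic first-order relation $\grad_p\widetilde E=p/\sqrt{1+\varepsilon|p|^2}$, in which the singular gradient terms $\grad U+\grad G$ cancel exactly between the advective and frictional contributions to $\L_1\widetilde E$, yields
\[
\L_1\widetilde E \;=\; -\frac{|p|^2}{1+\varepsilon|p|^2}+\frac{d}{\sqrt{1+\varepsilon|p|^2}}-\frac{\varepsilon|p|^2}{(1+\varepsilon|p|^2)^{3/2}} \;\le\; d
\]
uniformly in $\varepsilon$, together with the auxiliary bound $|\grad_p\widetilde E|^2=|p|^2/(1+\varepsilon|p|^2)\le 2\widetilde E$.

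Given these two bounds, applying It\^o's formula to $\widetilde E^n$ produces
\[
\L_1\widetilde E^n \;\le\; \bigl(nd+2n(n-1)\bigr)\widetilde E^{n-1} \;\le\; C_n\,\widetilde E^n
\]
(using $\widetilde E\ge 1$), while the quadratic variation of the martingale part $M_n(t)$ of $\widetilde E(t)^n$ satisfies $\tfrac{\d}{\d t}\langle M_n\rangle\le 4n^2\widetilde E^{2n-1}$. Taking expectations and invoking Gr\"onwall's inequality therefore yields $\E\widetilde E(t)^m\le C(m,T,q_0,p_0)$ uniformly in $\varepsilon$ for every $m\ge 1$ and $t\in[0,T]$; the Burkholder--Davis--Gundy inequality applied to $M_n$, combined with the pointwise bound at the larger power $m=2n-1$, then upgrades this to
\[
\E\sup_{t\in[0,T]}\widetilde E(t)^n\;\le\; C(n,T,q_0,p_0),
\]
again uniformly in $\varepsilon$.

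Finally, to pass back to the target quantity $U+G+|p|^2$, I would use the algebraic identity $\sqrt{1+\varepsilon|p|^2}=1+\varepsilon F$, where $F=|p|^2/(\sqrt{1+\varepsilon|p|^2}+1)$ is the kinetic part of $\widetilde E$. Since $F\le\widetilde E$ and $\varepsilon\in(0,1]$,
\[
|p|^2 \;=\; F\bigl(\sqrt{1+\varepsilon|p|^2}+1\bigr) \;=\; F(2+\varepsilon F) \;\le\; \widetilde E(2+\varepsilon\widetilde E) \;\le\; 3\widetilde E^2,
\]
where the last inequality uses $\widetilde E\ge 1$; together with $U+G\le \widetilde E$ this gives the pointwise dominance $U+G+|p|^2\le 4\widetilde E^2$, so \eqref{ineq:rLE:single-particle:moment-estimate:sup_[0,T]} follows by invoking the preceding $L^{2n}$ bound on $\widetilde E$. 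The main obstacle is that the kinetic part of $\widetilde E$ is comparable to $\tfrac12|p|^2$ only in the non-relativistic regime $\varepsilon|p|^2\ll 1$; in the ultra-relativistic regime $\varepsilon|p|^2\gg 1$ it degrades to order $|p|/\sqrt\varepsilon$, so a na\"ive energy estimate would yield a bound on $|p|/\sqrt\varepsilon$ rather than on $|p|^2$. The squared passage $|p|^2\le 3\widetilde E^2$ coming from $\sqrt{1+\varepsilon|p|^2}=1+\varepsilon F$ is the key algebraic device that trades one extra factor of $\widetilde E$ for the missing power of $|p|$ and makes a uniform-in-$\varepsilon$ $L^n$ bound on $|p|^2$ possible.
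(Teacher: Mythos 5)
Your argument is correct, and it is built on the same underlying idea as the paper's proof — namely that a functional \emph{quadratic} in the energy is needed to control $|p|^2$ uniformly in $\varepsilon$, because the relativistic kinetic energy only scales like $|p|/\sqrt{\varepsilon}$ in the ultra-relativistic regime — but the execution is genuinely different. The paper works with
\begin{align*}
\Gamma_2(q,p) = \tfrac{1}{2}\varepsilon\big(U+G\big)^2+ \big(U+G\big)\sqrt{1+\varepsilon|p|^2}+\tfrac{1}{2}|p|^2,
\end{align*}
which in your notation is exactly $\tfrac{\varepsilon}{2}\widetilde E^{\,2}+\widetilde E$; this functional dominates $U+G+\tfrac12|p|^2$ pointwise, its It\^o derivative enjoys the same cancellation of the singular gradients, and the paper closes the estimate with the exponential martingale inequality followed by Gr\"onwall. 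You instead propagate polynomial moments of the plain shifted Hamiltonian $\widetilde E$ via $\L_1\widetilde E\le d$, $|\grad_p\widetilde E|^2\le 2\widetilde E$, Gr\"onwall and Burkholder--Davis--Gundy, and then recover $|p|^2$ a posteriori through the algebraic identity $|p|^2=F(2+\varepsilon F)\le 3\widetilde E^{\,2}$ (valid since $U+G\ge 0$ by the normalization \eqref{cond:U+G>1}, which the paper's proof also needs for $\Gamma_2\ge U+G+\tfrac12|p|^2$). Your route is somewhat more elementary — it avoids exponential integrability entirely and only uses $L^m$ bounds — at the cost of needing the moment hierarchy at level $2n$ (and $2n-1$ inside BDG) to conclude at level $n$; the paper's route gets exponential moments of $\sup_t\Gamma_2$ in one shot. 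Both are complete modulo the standard localization needed to take expectations of the local martingale, which the paper also suppresses.
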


For the sake of clarity, we defer the proof of Lemma \ref{lem:rLE:single-particle:moment-estimate:sup_[0,T]} to the end of this subsection. Assuming the result of Lemma \ref{lem:rLE:single-particle:moment-estimate:sup_[0,T]}, let us conclude the proof of Theorem \ref{thm:newtonian-limit}, part 2 giving the $L^p$ convergence in the single-particle case, while making use of the auxiliary result in Lemma \ref{lem:U+G>|q|-log(q)}. See also \cite[Theorem 2.2]{higham2002strong}

\begin{proof}[Proof of Theorem \ref{thm:newtonian-limit}, part 2]
Similar to the proof of Theorem \ref{thm:newtonian-limit}, part 1, for $R>0$, we introduce the stopping times defined as
\begin{align*} 
\tau^R = \inf_{t\geq 0}\Big\{|q(t)|+|q(t)|^{-1}\geq R\Big\},
\end{align*}
and
\begin{align*}
\tau^R_{\varepsilon} = \inf_{t\geq 0}\Big\{|q^\varepsilon(t)|+ |q^\varepsilon(t))|^{-1}\geq R\Big\}.
\end{align*}
Also, recalling the smooth cut-off function $\theta_R$ defined in \eqref{form:theta_R}, we consider the truncated systems 
\begin{align} \label{eqn:rLE:single-particle:epsilon:truncating}
\d\, q^{\varepsilon,R}(t) &= \frac{p^{\varepsilon,R} }{\sqrt{1+\varepsilon |p^{\varepsilon,R}|^2}}\d t, \notag\\
\d\, p^{\varepsilon,R}(t) & = -\frac{p^{\varepsilon,R}}{\sqrt{1+\varepsilon |p^{\varepsilon,R}|^2}} \d t-\theta_R(|q^{\varepsilon,R}(t)|) \grad \U(q^{\varepsilon,R}(t))\d t +\sqrt{2} \,\d W(t)\notag \\
&\qquad -\theta_R\big(|q^{\varepsilon,R}(t)|^{-1}\big) \grad \G\big(q^{\varepsilon,R}(t)\big) \d t,
\end{align}
and
\begin{align} \label{eqn:LE:single-particle:truncating}
\d\, q^{R}(t) &= p^{R} \d t, \notag\\
\d\, p^{R}(t) & = -p^{R}\d t -\theta_R(|q^{R}(t)|) \grad \U(q^{R}(t))\d t +\sqrt{2} \,\d W(t) \notag \\
&\qquad - \theta_R\big(|q^{R}(t)|^{-1}\big) \grad \G\big(q^{R}(t)\big) \d t.
\end{align}
Observe that
\begin{align*}
\P\big( 0\le t\le \tau^R \mi \tau_\varepsilon^R,\, (q^\varepsilon(t),p^\varepsilon(t))=(q^{\varepsilon,R}(t),p^{\varepsilon,R}(t)) , \, (q(t),p(t))  =(q^R(t),p^R(t))    \big)=1.
\end{align*}
So, 
\begin{align} \label{ineq:E.sup_[0,T].|q^epsilon-q|:single-particle}
&\E \sup_{t\in[0,T]}\big[ |q^\varepsilon(t)-q(t)|^n+|p^\varepsilon(t)-p(t)|^n\big] \notag \\
&= \E \Big[\sup_{t\in[0,T]}\big[ |q^\varepsilon(t)-q(t)|^n+|p^\varepsilon(t)-p(t)|^n\big]\cdot  \mathbf{1}\{T\le \tau^R\mi\tau^R_\varepsilon\}  \Big] \notag \\
&\qquad + \E \Big[\sup_{t\in[0,T]}\big[ |q^\varepsilon(t)-q(t)|^n+|p^\varepsilon(t)-p(t)|^n\big]\cdot  \mathbf{1}\{T\ge \tau^R\mi\tau^R_\varepsilon\}  \Big] \notag\\
&\le \E \sup_{t\in[0,T]}\big[ |q^{\varepsilon,R}(t)-q^R(t)|^n+|p^{\varepsilon,R}(t)-p^R(t)|^n\big] \notag\\
&\qquad + \E \Big[\sup_{t\in[0,T]}\big[ |q^\varepsilon(t)-q(t)|^n+|p^\varepsilon(t)-p(t)|^n\big]\cdot  \mathbf{1}\{\tau^R \le T\}  \Big] \notag \\
&\qquad +\E \Big[\sup_{t\in[0,T]}\big[ |q^\varepsilon(t)-q(t)|^n+|p^\varepsilon(t)-p(t)|^n\big]\cdot  \mathbf{1}\{\tau^R_\varepsilon \le T\}  \Big] \notag \\
&=:I_1+I_2+I_3.
\end{align}
By truncating the nonlinear potentials, observe that \eqref{eqn:rLE:single-particle:epsilon:truncating}-\eqref{eqn:LE:single-particle:truncating} are Lipschitz systems. In light of Proposition \ref{prop:newtonian-limit:Lipschitz}, we readily obtain
\begin{align*}
I_1 =  \E \sup_{t\in[0,T]}\big[ |q^{\varepsilon,R}(t)-q^R(t)|^n+|p^{\varepsilon,R}(t)-p^R(t)|^n\big] \le \varepsilon^n C(T,R).
\end{align*}
With regard to $I_2$, we invoke Holder's and Young's inequalities to infer
\begin{align*}
I_2 & = \E \Big[\sup_{t\in[0,T]}\big[ |q^\varepsilon(t)-q(t)|^n+|p^\varepsilon(t)-p(t)|^n\big]\cdot  \mathbf{1}\{\tau^R \le T\}  \Big]\\
&\le C \Big|\E \sup_{t\in[0,T]}\big[ |q^\varepsilon(t)|^{2n}+|p^\varepsilon(t)|^{2n}+|q(t)|^{2n}+| p(t)|^{2n} \big] \Big|^{1/2}\\
&\qquad\times \Big|\P\Big( \sup_{t\in[0,T]}\big[ |q(t)|+|q(t)|^{-1}\big]\geq R  \Big)\Big|^{1/2}.
\end{align*}
In view of Lemma \ref{lem:LE:single-particle:moment-estimate} and Lemma \ref{lem:rLE:single-particle:moment-estimate:sup_[0,T]}, the expectation on the above right-hand side is bounded by
\begin{align*}
\E \sup_{t\in[0,T]}\big[ |q^\varepsilon(t)|^{2n}+|p^\varepsilon(t)|^{2n}+|q(t)|^{2n}+| p(t)|^{2n} \big] \le C(T).
\end{align*}
Also, reasoning as in the proof of Theorem \ref{thm:newtonian-limit}, part 1, let $c_G$ and $C_G$ be the constants as in Lemma \ref{lem:U+G>|q|-log(q)}. For all $R$ large enough, we have
\begin{align*}
& \Big\{ \sup_{t\in[0,T]}\big[ |q(t)|+|q(t)|^{-1}\big]\geq R  \Big\} \\
&\subseteq  \Big\{ \sup_{t\in[0,T]} |q(t)|\geq \frac{R}{2}  \Big\} \cup \Big\{ \sup_{t\in[0,T]}\big[-\log|q(t)|\big]\geq \log R-\log 2  \Big\} \\
&\subseteq  \Big\{ \sup_{t\in[0,T]} U(q(t))+G(q(t)) \ge \frac{c_G}{2C_G}\log R \Big\}.
\end{align*}
So, we invoke Markov's inequality together with Lemma \ref{lem:LE:single-particle:moment-estimate} to deduce
\begin{align*}
\P\Big( \sup_{t\in[0,T]}\big[ |q(t)|+|q(t)|^{-1}\big]\geq R  \Big) \le \frac{2C_G}{c_G\log R} \E \sup_{t\in[0,T]}\big[ U(q(t))+G(q(t))\big]\le \frac{C(T)}{\log R},
\end{align*}
whence
\begin{align*}
I_2 \le \frac{C(T)}{\sqrt{\log R}}.
\end{align*}
Likewise, 
\begin{align*}
I_3 & = \E \Big[\sup_{t\in[0,T]}\big[ |q^\varepsilon(t)-q(t)|^n+|p^\varepsilon(t)-p(t)|^n\big]\cdot  \mathbf{1}\{\tau^R_\varepsilon \le T\}  \Big]\\
&\le C \Big|\E \sup_{t\in[0,T]}\big[ |q^\varepsilon(t)|^{2n}+|p^\varepsilon(t)|^{2n}+|q(t)|^{2n}+| p(t)|^{2n} \big] \Big|^{1/2}\\
&\qquad\times \Big|\P\Big( \sup_{t\in[0,T]}\big[ |q^\varepsilon(t)|+|q^\varepsilon(t)|^{-1}\big]\geq R  \Big)\Big|^{1/2}\\
& \le \frac{C}{\sqrt{\log R}} \Big|\E \sup_{t\in[0,T]}\big[ |q^\varepsilon(t)|^{2n}+|p^\varepsilon(t)|^{2n}+|q(t)|^{2n}+| p(t)|^{2n} \big] \Big|^{1/2}\\
&\qquad \times \Big|\E \sup_{t\in[0,T]}\big[ U(q^\varepsilon(t))+G(q^\varepsilon(t))\big]\Big|^{1/2}\\
&\le \frac{C(T)}{\sqrt{\log R}}.
\end{align*}
Altogether, we collect the estimates on $I_1,I_2$ and $I_3$ to obtain the bound
\begin{align*}
&\E \sup_{t\in[0,T]}\big[ |q^\varepsilon(t)-q(t)|^n+|p^\varepsilon(t)-p(t)|^n\big] \notag \\
&\le I_1+I_2+I_3\le \varepsilon^n C(T,R)+ \frac{C(T)}{\sqrt{\log R}}.
\end{align*}
In turn, by first taking $R$ sufficiently large and then shrinking $\varepsilon$ to zero, we establish the desired limit \eqref{lim:newtonian-limit:single-particle:L^p}. The proof is thus complete.

\end{proof}

Finally, we turn to the auxiliary estimates in Lemma \ref{lem:LE:single-particle:moment-estimate} and Lemma \ref{lem:rLE:single-particle:moment-estimate:sup_[0,T]}. On the one hand, the argument of the former result is essentially the same as in the proof of Lemma \ref{lem:LE:N-particle:moment-estimate} for $N$-particle case, and thus is omitted. On the other hand, the proof of the latter result employs uniform Lyapunov functionals with respect to the parameter $\varepsilon$ and is provided below.

\begin{proof}[Proof of Lemma \ref{lem:rLE:single-particle:moment-estimate:sup_[0,T]}] In order to establish \eqref{ineq:rLE:single-particle:moment-estimate:sup_[0,T]}, we introduce the functional 
\begin{align*}
\Gamma_2(q,p) = \frac{1}{2}\varepsilon\big(U(q)+G(q)\big)^2+ \big(U(q)+G(q)\big)\sqrt{1+\varepsilon|p|^2}+\frac{1}{2}|p|^2.
\end{align*}
From \eqref{eqn:rLE:single-particle:epsilon}, applying It\^o's formula to $\Gamma_2$ gives
\begin{align} \label{eqn:d.Gamma_2(t)}
\d \Gamma_2(t)  & =  -\frac{|p^\varepsilon(t)|^2}{\sqrt{1+\varepsilon|p^\varepsilon(t)|^2}}\d t -\big[U(q^\varepsilon(t))+G(q^\varepsilon(t))\big]\frac{\varepsilon|p^\varepsilon(t)|^2}{1+\varepsilon|p^\varepsilon(t)|^2}\d t \notag \\
&\qquad+ \big[U(q^\varepsilon(t))+G(q^\varepsilon(t))\big]\frac{d\varepsilon+(d-1)\varepsilon^2|p^\varepsilon(t)|^2}{(1+\varepsilon|p^\varepsilon(t)|^2)^{3/2}}\d t+d\d t+\d M_1(t),
\end{align}
where $M_1(t)$ is the semi martingale process given by
\begin{align*}
\d M_1(t)&= \Big\la \big[U(q^\varepsilon(t))+G(q^\varepsilon(t))\big]\frac{\varepsilon p^\varepsilon(t)}{\sqrt{1+\varepsilon|p^\varepsilon(t)|^2}}+p^\varepsilon(t),\sqrt{2}\d W(t)\Big\ra,
\end{align*}
and whose quadratic variation is defined as
\begin{align*}
\d \la M_1\ra(t) = 2\Big| \big[U(q^\varepsilon(t))+G(q^\varepsilon(t))\big]\frac{\varepsilon p^\varepsilon(t)}{\sqrt{1+\varepsilon|p^\varepsilon(t)|^2}}+p^\varepsilon(t)\Big|^2\d t.
\end{align*}
Recalling \eqref{ineq:d.epsilon}, we infer from the expression \eqref{eqn:d.Gamma_2(t)} that
\begin{align*}
&\d\Gamma_2(t)  \le (2d-1)\varepsilon\big[U(q^\varepsilon(t))+G(q^\varepsilon(t))\big]\d t+d\d t+ \d M_1(t),
\end{align*}
and that
\begin{align*}
    \d \la M_1\ra(t)  \le  4\varepsilon \big[U(q^\varepsilon(t))+G(q^\varepsilon(t))\big]^2+4|p^\varepsilon(t)|^2\d t\le 8\Gamma_2(t)\d t.
\end{align*}
As a consequence, for $n\ge 2$, a routine calculation making use of the above estimates gives
\begin{align*}
    \d \Gamma_2(t)^n & = n\Gamma_2(t)^n \d \Gamma_2(t) + \frac{1}{2}n(n-1)\Gamma_2(t)^{n-2}\la \d \Gamma_2(t) ,\d \Gamma_2(t) \ra\\
    &\le n(2d-1)\varepsilon\Gamma_2(t)^{n-1}\big[U(q^\varepsilon(t))+G(q^\varepsilon(t))\big]\d t+nd\Gamma_2(t)^{n-1}\d t\\
    &\qquad + 4n(n-1)\Gamma_2(t)^{n-1} \d t+ n\Gamma_2(t)^{n-1}\d M_1(t).
\end{align*}
In particular, since $U+G\le \Gamma_2$, we have
\begin{align*}
    n(2d-1)\varepsilon\Gamma_2(t)^{n-1}\big[U(q^\varepsilon(t))+G(q^\varepsilon(t))\big] \le n(2d-1)\Gamma_2(t)^{n} ,
\end{align*}
whence
\begin{align} \label{ineq:d.Gamm_2^n}
    \d \Gamma_2(t)^n \le n(2d-1)\Gamma_2(t)^{n} \d t + (nd+4n(n-1))\Gamma_2(t)^{n-1}\d t + n\Gamma_2(t)^{n-1}\d M_1(t).
\end{align}
It follows that
\begin{align*}
    \E\Gamma_2(t)^n \d t\le \Gamma_2(0)^n+ \E\int_0^T n(2d-1)\Gamma_2(t)^{n} + (nd+4n(n-1))\Gamma_2(t)^{n-1}\d t.
\end{align*}
In particular, this implies by virtue of Gronwall's inequality that
\begin{align} \label{ineq:E.int_0^t.Gamma_2^n}
    \E\int_0^T\Gamma_2(t)^{n}\d t\le C,
\end{align}
where $C=C(n,T,q_0,p_0)$ is independent of $n$. 

Turning back to \eqref{ineq:rLE:single-particle:moment-estimate:sup_[0,T]}, we employ \eqref{ineq:d.Gamm_2^n} once again with Burkholder's inequality to deduce
\begin{align*}
    \E\sup_{t\in[0,T]}\Gamma_2(t)^n& \le \Gamma_2(0)^n+ C\int_0^T\Gamma_2(t)^{n}\d t + n\E\sup_{t\in[0,T]}\Big|\int_0^T\Gamma_2(t)^{n-1}\d M_1(t)\Big|\\
    &\le  \Gamma_2(0)^n+ C\E\int_0^T\Gamma_2(t)^{n}\d t+ C\E\int_0^T \Gamma_2(t)^{2n-1}\d t +C.
\end{align*}
This together with \eqref{ineq:E.int_0^t.Gamma_2^n} implies that
\begin{align*}
    \E\sup_{t\in[0,T]}\Gamma_2(t)^n \le C(n,T,q_0,p_0).
\end{align*}
Since $\Gamma_2\ge U+G$, we immediately establish \eqref{ineq:rLE:single-particle:moment-estimate:sup_[0,T]}, as claimed.

\begin{comment}

In view of the exponential martingale inequality \eqref{ineq:exponential-Martingale} applying to $M_1(t)$, we have
\begin{align*}
\E \exp\Big\{\sup_{t\ge 0} \frac{1}{2} M_1(t)-\frac{1}{4} \la M_1\ra(t) \Big\} \le 2,
\end{align*}
implying
\begin{align} \label{ineq:E.exp.sup_[0,T].Gamma_2(t)}
&\E\exp\Big\{ \sup_{t\in[0,T]} \Big[\frac{1}{2} \Gamma_2(t)  -\frac{1}{2}(2d-1)\int_0^t\varepsilon\big[U(q^\varepsilon(s))+G(q^\varepsilon(s))\big]\d s-\frac{1}{4} \la M_1\ra(t) \Big]\Big\} \notag \\
&\qquad\le 2\exp \big\{\Gamma_2(q_0,p_0)+dT\big\}.
\end{align}
Note that
\begin{align*}
\d \la M_1\ra(t)\le 4\varepsilon\big[U(q^\varepsilon(t))+G(q^\varepsilon(t))\big]^2\d t+4|p^\varepsilon(t)|^2\d t\le 8\Gamma_2(t)\d t.
\end{align*}
Combining with \eqref{ineq:E.exp.sup_[0,T].Gamma_2(t)} and Holder's inequality, for all $n\ge 1$, we get
\begin{align*}
\E \sup_{t\in[0,T]}\Gamma_2(t)^n \le C e^{cT}+ C T^{n-1}\int_0^T\E\sup_{s\in[0,t]}\Gamma_2(s)^n \d t,
\end{align*}
for some positive constants $c,C$ independent of $\varepsilon$. Since $\Gamma_2(q,p)\ge U(q)+G(q)+\frac{1}{2}|p|^2$, the above estimate produces \eqref{ineq:rLE:single-particle:moment-estimate:sup_[0,T]} by virtue of Gronwall's inequality. The proof is thus finished.

\end{comment}

\end{proof}

%\section*{Acknowledgments}
%The research of M. H. Duong was supported by EPSRC Grants EP/V038516/1 and  EP/W008041/1. 

%M. H. Duong gratefully acknowledges Yulong Lu for useful discussions in an early stage of the topics of this work. The authors also would like to thank the anonymous reviewers for their valuable comments and suggestions.
\appendix

\section{Auxiliary results}
\label{sec:appendix}

In this section, we collect useful estimates on $\qb\in \D$, including Lemma \ref{lem:|q_i-q_i|} and Lemma \ref{lem:U+G>|q|-log(q)}. In particular, the result of Lemma \ref{lem:|q_i-q_i|} was directly employed to construct 
Lyapunov functions in Section \ref{sec:poly-mixing:Lyapunov:N-particle} 
whereas the result of Lemma \ref{lem:U+G>|q|-log(q)} appeared in the proof of Theorem \ref{thm:newtonian-limit}, presented in Sections \ref{sec:newtonian-limit:proof-of-thm:part-1}-\ref{sec:newtonian-limit:proof-of-thm:part-2}.

\begin{lemma} \label{lem:|q_i-q_i|} For all $\qb \in \D$, $\gamma\in(0,1] $ and $s\ge 0$, the following holds
\begin{align} \label{ineq:<(q_i-q_j)/|q_i-q_j|,q_i-q_k/|q_i-q_k|>}
\sum_{i=1}^N \bigg\la  \sum_{j\neq i}\frac{q_i-q_j}{|q_i-q_j|^{\gamma}},\sum_{k\neq i}\frac{q_i-q_k}{|q_i-q_k|^{s+1}} \bigg\ra \ge 2\sum_{1\le i<j\le N}\frac{1}{|q_i-q_j|^{s+\gamma-1}}.
\end{align}

\end{lemma}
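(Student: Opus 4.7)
The plan is to expand the inner product and split it according to whether $j=k$ or $j\neq k$. My first observation is that, writing $u_{ij}:=q_i-q_j$, the ``diagonal'' contribution where $j=k$ is
\[
\sum_{i}\sum_{j\neq i}\frac{|u_{ij}|^2}{|u_{ij}|^{\gamma}|u_{ij}|^{s+1}}=\sum_{i\neq j}|u_{ij}|^{1-s-\gamma}=2\sum_{i<j}|u_{ij}|^{-(s+\gamma-1)},
\]
which is exactly the right-hand side of~\eqref{ineq:<(q_i-q_j)/|q_i-q_j|,q_i-q_k/|q_i-q_k|>}. So it suffices to prove that the off-diagonal remainder, a sum over ordered triples $(i,j,k)$ of three distinct indices, is non-negative.

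For the off-diagonal part I would group the terms by the unordered triple $\{a,b,c\}$ of three distinct indices. After symmetrizing each triple's six contributions via the swap $j\leftrightarrow k$ and invoking the cosine identity $\la u_{ij},u_{ik}\ra=\tfrac{1}{2}(|u_{ij}|^2+|u_{ik}|^2-|u_{jk}|^2)$, the contribution of each triple becomes a symmetric expression in the three side lengths $x:=|q_a-q_b|$, $y:=|q_a-q_c|$, $z:=|q_b-q_c|$. Clearing denominators by multiplying through by $(xyz)^{s+1}$, the per-triple claim reduces to a polynomial inequality of the form
\[
(x^2+y^2-z^2)(x^p+y^p)z^q+(x^2+z^2-y^2)(x^p+z^p)y^q+(y^2+z^2-x^2)(y^p+z^p)x^q\ge 0,
\]
where $p:=s+1-\gamma$ and $q:=s+1$. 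This must hold whenever $(x,y,z)$ are realizable as distances between three points in Euclidean space, hence satisfy the triangle inequality.

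To establish this polynomial inequality I would split into two cases. When all three cosine factors $x^2+y^2-z^2$, $x^2+z^2-y^2$, $y^2+z^2-x^2$ are non-negative (the acute-triangle case), every summand is non-negative and the inequality is immediate. When exactly one factor is negative (the obtuse case), the constraint $\gamma\in(0,1]$ enters decisively: it limits the power of the denominator factor $|u_{ij}|^\gamma$ attached to the obtuse apex, so that the two positive contributions from the acute vertices dominate the single negative one once the triangle inequality $x+y\ge z$ (and its cyclic counterparts) is invoked. I expect the obtuse case to be the main obstacle, requiring a careful algebraic estimate that combines the triangle inequality with the explicit range of the exponent $\gamma$.
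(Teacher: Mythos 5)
Your reduction is correct and follows the same route as the paper: the $j=k$ terms produce exactly the right-hand side, and the remainder decomposes over unordered triples of distinct indices, whose per-triple contribution (after the law of cosines and clearing denominators) is precisely the symmetric expression you write down, with $p=s+1-\gamma$ and $q=s+1$. The acute/right case is also handled the same way in the paper: all three cosine factors are non-negative, so each summand is non-negative.

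However, the obtuse case is not proven — you only describe it as "requiring a careful algebraic estimate that combines the triangle inequality with the explicit range of the exponent $\gamma$," which is a statement of intent rather than an argument, and the obtuse case is the entire content of the lemma. Moreover, the ingredients you name are not quite the right ones. The paper's argument does not use the triangle inequality $x+y\ge z$; it uses two different geometric facts about an obtuse triangle with obtuse angle $\theta_i$ at vertex $q_i$: first, $\min\{\cos\theta_j,\cos\theta_k\}\ge\cos(\theta_j+\theta_k)=-\cos\theta_i\ge 0$ (since $\theta_j+\theta_k=\pi-\theta_i<\pi/2$), and second, the side $|q_j-q_k|$ opposite the obtuse angle is the longest side. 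With these, one discards the two non-negative terms $\cos\theta_j\,|q_j-q_i|^{1-\gamma}/|q_j-q_k|^{s}$ and $\cos\theta_k\,|q_k-q_i|^{1-\gamma}/|q_k-q_j|^{s}$, and then dominates the negative contribution term by term: $\cos\theta_j\,|q_j-q_k|^{1-\gamma}/|q_j-q_i|^{s}\ge|\cos\theta_i|\,|q_i-q_k|^{1-\gamma}/|q_i-q_j|^{s}$ (using $1-\gamma\ge 0$ to compare the numerators), and symmetrically for the $\theta_k$ term. To complete your write-up you must either carry out this matching explicitly or supply a genuine proof of the cleared-denominator polynomial inequality on the cone of triangle-realizable $(x,y,z)$; as it stands the key step is missing.
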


\begin{remark}
The case $\gamma=1$ in Lemma \ref{lem:|q_i-q_i|} was actually proven in \cite[Lemma 4.2]{lu2019geometric}.
\end{remark}

\begin{proof}[Proof of Lemma \ref{lem:|q_i-q_i|}]
We shall follow closely the proof of \cite[Lemma A.2]{duong2024asymptotic} tailored to our settings. By a routine calculation, we have 
\begin{align*}
&\sum_{i=1}^N \bigg\la  \sum_{j\neq i}\frac{q_i-q_j}{|q_i-q_j|^\gamma},\sum_{k\neq i}\frac{q_i-q_k}{|q_i-q_k|^{s+1}} \bigg\ra 
=  2\sum_{1\le i<j\le N}\frac{1}{|q_i-q_j|^{s+\gamma-1}}+\sum_{1\le i<j<k\le N} A_{ijk},
\end{align*}
where
\begin{align*}
A_{ijk}&=  \bigg\la  \frac{q_i-q_j}{|q_i-q_j|^\gamma},\frac{q_i-q_k}{|q_i-q_k|^{s+1}} \bigg\ra + \bigg\la  \frac{q_i-q_k}{|q_i-q_k|^\gamma},\frac{q_i-q_j}{|q_i-q_j|^{s+1}} \bigg\ra \\
&\qquad +\bigg\la  \frac{q_j-q_i}{|q_j-q_i|^\gamma},\frac{q_j-q_k}{|q_j-q_k|^{s+1}} \bigg\ra + \bigg\la  \frac{q_j-q_k}{|q_j-q_k|^\gamma},\frac{q_j-q_i}{|q_j-q_i|^{s+1}} \bigg\ra\\
&\qquad +\bigg\la  \frac{q_k-q_i}{|q_k-q_i|^\gamma},\frac{q_k-q_j}{|q_k-q_j|^{s+1}} \bigg\ra + \bigg\la  \frac{q_k-q_j}{|q_k-q_j|^\gamma},\frac{q_k-q_i}{|q_k-q_i|^{s+1}} \bigg\ra.
\end{align*}
It therefore suffices to prove that $A_{ijk}$ is always non-negative. To this end, denote by $\theta_i,\theta_j,\theta_k$ the angles formed by these points and whose vertices are $q_i,q_j,q_k$, respectively. Observe that $A_{ijk}$ can be recast as follows.
\begin{align*}
A_{ijk} &= \cos \theta_i \bigg[  \frac{|q_i-q_j|^{1-\gamma}}{|q_i-q_k|^s}+  \frac{|q_i-q_k|^{1-\gamma}}{|q_i-q_j|^s} \bigg] +\cos \theta_j \bigg[  \frac{|q_j-q_i|^{1-\gamma}}{|q_j-q_k|^s}+  \frac{|q_j-q_k|^{1-\gamma}}{|q_j-q_i|^s} \bigg] \\
&\qquad +\cos \theta_k \bigg[  \frac{|q_k-q_i|^{1-\gamma}}{|q_k-q_j|^s}+  \frac{|q_k-q_j|^{1-\gamma}}{|q_k-q_i|^s} \bigg].
\end{align*}
Now, there are two cases to be considered depending on the triangle formed by $q_i,q_j,q_k$ in $\rbb^d$.

Case 1: the triangle is either an acute or right triangle. In this case, since $\theta_k,\theta_j\in[0,\pi/2)$ and $\cos$
is decreasing on $[0,\pi/2)$, it is clear that $\cos \theta_i$, $\cos \theta_i$ and $\cos \theta_i$ are both non-negative. This immediately implies that so is $A_{ijk}$.   

Case 2: the triangle is an obtuse triangle.  Without loss of generality, suppose that the angle $\theta_i\in (\pi/2,\pi]$, and thus $\theta_j+\theta_k\in[0,\pi/2)$. In this case, it holds that
\begin{align*}
\min\{\cos(\theta_j),\cos(\theta_k)\}\ge \cos(\theta_j+\theta_k)=-\cos(\theta_i)\ge 0.
\end{align*} 
Also, 
\begin{align*}
|q_j-q_k|\ge \max\{|q_j-q_i|,|q_k-q_i| \}.
\end{align*}
As a consequence, since $\gamma\in (0,1)$, observe that
\begin{align*}
A_{ijk}
 &\ge -| \cos \theta_i| \bigg[  \frac{|q_i-q_j|^{1-\gamma}}{|q_i-q_k|^s}+  \frac{|q_i-q_k|^{1-\gamma}}{|q_i-q_j|^s} \bigg] +\cos \theta_j  \frac{|q_j-q_k|^{1-\gamma}}{|q_j-q_i|^s} +\cos \theta_k   \frac{|q_k-q_j|^{1-\gamma}}{|q_k-q_i|^s}\\
&\ge -| \cos \theta_i| \bigg[  \frac{|q_i-q_j|^{1-\gamma}}{|q_i-q_k|^s}+  \frac{|q_i-q_k|^{1-\gamma}}{|q_i-q_j|^s} \bigg] +\cos \theta_j  \frac{|q_i-q_k|^{1-\gamma}}{|q_j-q_i|^s} +\cos \theta_k   \frac{|q_i-q_j|^{1-\gamma}}{|q_k-q_i|^s}\\
&\ge 0.
\end{align*}
In turn, this produces \eqref{ineq:<(q_i-q_j)/|q_i-q_j|,q_i-q_k/|q_i-q_k|>}, as claimed.

\end{proof}

\begin{lemma} \label{lem:U+G>|q|-log(q)} Under Assumptions \nameref{cond:U} and \nameref{cond:G1}, for all $\qb\in\D$, the following holds
\begin{align}\label{ineq:U+G>|q|-log(q)}
&C_G\bigg[\sum_{i=1}^NU(q_i)+\sum_{1\le i<j\le N}G(q_i-q_j)\bigg] \notag \\
& \ge \sum_{i=1}^N|q_i|-c_G \sum_{1\le i<j\le N}\log|q_i-q_j| \notag \\
&\ge c_G \sum_{i=1}^N|q_i|+c_G\max\{-\log|q_i-q_j|:1\le i<j\le N\},
\end{align}
for some positive constants $c_G$ small and $C_G$ large enough independent of $\qb$. Furthermore, 
\begin{align} \label{ineq:|q|-log|q|>0}
    \frac{1}{2}\sum_{i=1}^N|q_i|-c_G \sum_{1\le i<j\le N}\log|q_i-q_j| \ge 0.
\end{align} 
\end{lemma}
\begin{proof} By taking $c_G$ small enough, the second inequality of \eqref{ineq:U+G>|q|-log(q)} and the lower bound \eqref{ineq:|q|-log|q|>0} are relatively not difficult to verify. In what follows, we shall present the argument for the left-hand side inequality involving $G$.

First of all, we claim that 
\begin{align}\label{ineq:G+|q|>-log|q|}
G(q)+\tilde{C}_G(|q|+1)\ge - \tilde{c}_G\log|q|,\quad q\in\rbb^d\setminus\{0\},
\end{align}
for some positive constants $\tilde{c}_G,\tilde{C}_G$. To see this, there are two cases to be considered depending on the dimension $d$. 

Case 1: $d=1$. In this case, since $G$ is even, we note that 
\begin{align*}
G(q)-G(1)=G(|q|)-G(1) =\int_1^{|q|}G'(s)\d s.
\end{align*}
In view of condition \eqref{cond:G:|grad.G(x)+q/|x|^beta_1|<1/|x|^beta_2} together with the fact that $\beta_2<\beta_1$, we infer
\begin{align*}
\Big|\int_1^{|q|}G'(s)\d s  +a_4\int_1^{|q|}\frac{1}{s^{\beta_1}}\d s\Big|&\le \Big|\int_1^{|q|}G'(s) +\frac{a_4}{s^{\beta_1}}\d s\Big|\\
&\le \Big|\int_1^{|q|}\Big[\frac{|a_5|}{s^{\beta_2}}+a_6\Big]\d s\Big|\\
&\le \frac{1}{2}a_4\Big|\int_1^{|q|}\frac{1}{s^{\beta_1}}\d s\Big|+C|q|+C.
\end{align*}
It follows that
\begin{align*}
G(q)+C|q|+C\ge -\frac{1}{2}a_4\int_1^{|q|}\frac{1}{s^{\beta_1}}\d s.
\end{align*}
Since $\beta_1\ge 1$, this immediately produces \eqref{ineq:G+|q|>-log|q|} and completes case 1.

Case 2: $d\ge 2$. In this case, let $\{e_j\}_{j=1,\dots,d}$ denote the Euclidean basis in $\rbb^d$. Observe that there must exist at least some $e_j$ such that $q$ is not a multiple of $e_j$. Suppose without loss of generality, $q\notin\{c\,e_1:c\in\rbb\}$. In particular, this implies that the linear line $\gamma(s)=s\cdot q+(1-s)e_1$, $s\in\rbb$, does not cross the origin.  As a consequence, we may employ the fundamental theorem of calculus to obtain the identity
\begin{align*}
G(q)-G(e_1) & = \int_0^1 \la \grad G(\gamma(s)),\gamma'(s)\ra\d s \\
& =-a_4\int_0^1 \frac{\la \gamma(s),\gamma'(s)\ra}{|\gamma(s)|^{\beta_1+1}}\d s-a_5\int_0^1 \frac{\la \gamma(s),\gamma'(s)\ra}{|\gamma(s)|^{\beta_2+1}}\d s\\
&\qquad + \int_0^1  \Big\la \grad G(\gamma(s))+\frac{a_4\gamma(s)}{|\gamma(s)|^{\beta_1+1}}+\frac{a_5\gamma(s)}{|\gamma(s)|^{\beta_2+1}},\gamma'(s)\Big\ra\d s. 
\end{align*}
On the one hand, since $\beta_2<\beta_1$ and $\beta_1\ge 1$, we infer that
\begin{align*}
& -a_4\int_0^1 \frac{\la \gamma(s),\gamma'(s)\ra}{|\gamma(s)|^{\beta_1+1}}\d s-a_5\int_0^1 \frac{\la \gamma(s),\gamma'(s)\ra}{|\gamma(s)|^{\beta_2+1}}\d s \\
&\qquad= -\frac{1}{2} a_4\int_0^1 \frac{1}{|\gamma(s)|^{\beta_1+1}}\d (|\gamma(s)|^2)-\frac{1}{2} a_5\int_0^1 \frac{1}{|\gamma(s)|^{\beta_2+1}}\d (|\gamma(s)|^2)\\
&\qquad\ge -c  \int_0^1 \frac{1}{|\gamma(s)|^{\beta_1+1}}\d (|\gamma(s)|^2)+C\\
&\qquad\ge -c\log|q|+C.
\end{align*}
On the other hand, we invoke condition \eqref{cond:G:|grad.G(x)+q/|x|^beta_1|<1/|x|^beta_2} to infer
\begin{align*}
\Big|\int_0^1  \Big\la \grad G(\gamma(s))+\frac{a_4\gamma(s)}{|\gamma(s)|^{\beta_1+1}}+\frac{a_5\gamma(s)}{|\gamma(s)|^{\beta_2+1}},\gamma'(s)\Big\ra\d s\Big| \le a_6 \int_0^11 |\gamma'(s)|\d s= a_6|q-e_1|\le a_6(|q|+1).
\end{align*}
It follows that
\begin{align*}
G(q)-G(e_1) \ge -c\log|q|+C-a_6(|q|+1).
\end{align*}
This implies \eqref{ineq:G+|q|>-log|q|}, which completes case 2 for all dimensions $d\ge 2$. 

Turning back to \eqref{ineq:U+G>|q|-log(q)}, thanks to condition \eqref{cond:U:U(x)=O(x^lambda+1)}, we observe that
\begin{align*}
\frac{c_G}{\tilde{c}_G}\sum_{i=1}^N U(q_i)\ge \sum_{i=1}^N\Big[ |q_i|+\frac{c_G\tilde{C}_G}{\tilde{c}_G}(|q_i|+1)\Big] -C.
\end{align*}
Together with \eqref{ineq:G+|q|>-log|q|}, we get
\begin{align*}
&\frac{c_G}{\tilde{c}_G}\bigg[\sum_{i=1}^N U(q_i)+\sum_{1\le i<j\le N}G(q_i-q_j)\bigg]\\
&\qquad\ge \sum_{i=1}^N |q_i|+ \frac{c_G}{\tilde{c}_G}\sum_{1\le i<j\le N}\Big[ G(q_i-q_j)+\tilde{C}_G(|q_i-q_j|+1)\Big]-C\\
&\qquad\ge \sum_{i=1}^N |q_i|-c_g \sum_{1\le i<j\le N} \log|q_i-q_j|-C.
\end{align*}
Since it is assumed that $\sum_{i=1}^N U(q_i)+\sum_{1\le i<j\le N}G(q_i-q_j)\ge 1$, cf. \eqref{cond:U+G>1:N>1}, the above estimate immediately implies \eqref{ineq:U+G>|q|-log(q)}, thereby finishing the proof. 

\end{proof}

\section{Well-posedness of \eqref{eqn:rLE:N-particle:epsilon} } \label{sec:well-posed}

In this section, we present the proof of Proposition \ref{prop:rLE:N-particle:well-posed} giving the global well-posedness of \eqref{eqn:rLE:N-particle:epsilon}. Following Lyapunov stability methods for establishing strong wellposedness of SDEs
with singular coefficients (see e.g., \cite{veretennikov2024lyapunov} for a general presentation), the argument essentially consists of two main steps as follows.

Step 1: we truncate the nonlinearities in \eqref{eqn:rLE:N-particle:epsilon} using suitable smooth cut-off functions. This results in a Lipschitz system, which is actually given by \eqref{eqn:rLE:N-particle:epsilon:truncating}. By a standard argument, it can be shown that this truncated system has a strong solution, thereby giving the local well-posedness of\eqref{eqn:rLE:N-particle:epsilon}.

Step 2: we remove the Lipschitz constraint by exploiting the Hamiltonian structures, i.e., the function $H$ and $H_N$ respectively defined in \eqref{form:H} and \eqref{form:H_N} corresponding to $N= 1$ and $N\ge 2$. This ultimately establishes the well-posedness of \eqref{eqn:rLE:N-particle:epsilon}.

\begin{proof}[Proof of Proposition \ref{prop:rLE:N-particle:well-posed}] In what follows, we proceed to prove the well-posedness of the multi-particle case $N\ge 2$. The case $N=1$ can be established using a similar argument.

Letting $R>0$, recall the smooth function $\theta_R$ defined in \eqref{form:theta_R} and the truncated system \eqref{eqn:rLE:N-particle:epsilon:truncating}. Observe that the drift terms of this system are Lipschitz and bounded functions. Hence, we may employ the argument of \cite{veretennikov1981strong,veretennikov2024lyapunov} to obtain a global solution $(\qb^{\varepsilon,R},\pb^{\varepsilon,R})$ of \eqref{eqn:rLE:N-particle:epsilon:truncating}. As a consequence, there exists unique strong solutions of \eqref{eqn:rLE:N-particle:epsilon} until the stopping time $\sigma^R_\varepsilon$ defined in \eqref{form:stopping-time:sigma^R_varepsilon}, i.e.,
\begin{align*}
    \sigma^R_\varepsilon = \inf\Big\{t\ge 0: \sum_{i=1}^N |q_i^\varepsilon(t)|+\sum_{1\le i<j\le N} |q_i^\varepsilon(t)-q_j^\varepsilon(t)|^{-1} \ge R \Big\}.
\end{align*}
Since $\sigma^R_\varepsilon$ is non-decreasing, we may define $\lim_{R\to\infty}\sigma^R_\varepsilon=\sigma_\varepsilon$. It remains to prove that $\sigma_\varepsilon= \infty$ a.s. To this end, we invoke \eqref{eqn:L_N.H_N} and the first inequality of \eqref{ineq:d.epsilon} to infer
\begin{align*}
    \d H_N(t\mi\sigma^R_\varepsilon ) \le C\d t+\sqrt{2}\sum_{i=1}^N \frac{\varepsilon p_i(t\mi \sigma^R_\varepsilon)}{\sqrt{1+\varepsilon|p_i^\varepsilon(t\mi\sigma_\varepsilon^R)|^2}}\d W_i,
\end{align*}
where $C$ is a positive constant independent of $R$. In view of Burkh\"older-Davis-Gundy's inequality and the second inequality of \eqref{ineq:d.epsilon}, we deduce
\begin{align*}
\E\sup_{t\in[0,T]}H_N(t\mi\sigma^R_\varepsilon ) \le H_N(\qb_0,\pb_0)+CT.
\end{align*}
In particular, this implies that
\begin{align*}
    \E\Big[\mathbf{1}\{\sigma^R_\varepsilon\le  T\} \sup_{t\in[0,T]}H_N(t\mi\sigma^R_\varepsilon )\Big] \le H_N(\qb_0,\pb_0)+CT.
\end{align*}
Using an argument similar to that of \eqref{ineq:sigma^R<T}, we have for all $R$ large enough
\begin{align*} 
    \big\{  \sigma^R_\varepsilon\le T\big\} & = \Big\{\sup_{t\in[0,T]}\sum_{i=1}^N |q_i^\varepsilon(t\mi \sigma^R_\varepsilon)|+\sum_{1\le i<j\le N} |q_i^\varepsilon(t\mi \sigma^R_\varepsilon)-q_j^\varepsilon(t\mi \sigma^R_\varepsilon)|^{-1} \ge R\Big\}\\
    &\subseteq\Big\{ \sup_{t\in[0,T]}\sum_{i=1}^N U(q_i(t\mi \sigma^R_\varepsilon))+\sum_{1\le i<j\le N}G(q_i(t\mi \sigma^R_\varepsilon)-q_i(t\mi \sigma^R_\varepsilon)) \geq \frac{c_G}{C_G}\log\Big(\frac{R}{N^2}\Big)\Big\}.
\end{align*}
In the above, $c_G$ and $C_G$ are the constants as in Lemma \ref{lem:U+G>|q|-log(q)}. From the expression of $H_N$ in \eqref{form:H_N}, we get
\begin{align*} 
    \big\{  \sigma^R_\varepsilon<T\big\} 
    &\subseteq\Big\{ \sup_{t\in[0,T]}H_N(t\mi \sigma^R_\varepsilon) \geq \varepsilon\frac{c_G}{C_G}\log\Big(\frac{R}{N^2}\Big)\Big\}.
\end{align*}
It follows that
\begin{align*}
    \P(\sigma^R_\varepsilon\le T) \le \frac{H_N(\qb_0,\pb_0)+CT}{\log R}.
\end{align*}
Sending $R$ to infinity yields
\begin{align*}
    \P(\sigma_\varepsilon\le T)  =0.
\end{align*}
Since this holds for all $T>0$, we deduce $\sigma_\varepsilon = \infty$ a.s. The proof is thus finished.

\end{proof}
\section*{Acknowledgment} The research of HD was supported by EPSRC Grant EP/Y008561/1. We would like to thank the anonymous referees for the constructive comments and suggestions, which have helped us to improve the presentation of the paper.

\bibliographystyle{abbrv}
{\footnotesize\bibliography{GLE-bib}}

\end{document}